\newcommand{\autorefcheckize}[1]{%
  \expandafter\let\csname @@\string#1\endcsname#1%
  \expandafter\DeclareRobustCommand\csname relax\string#1\endcsname[1]{%
    \csname @@\string#1\endcsname{##1}\wrtusdrf{##1}}%
  \expandafter\let\expandafter#1\csname relax\string#1\endcsname
}
\declaretheorem[numberwithin=section]{theorem}
\declaretheorem[sibling=theorem, name=Lemma]{lem}
\declaretheorem[sibling=theorem, name=Corollary]{cor}
\declaretheorem[numberwithin=section, name=Example]{eg}
\numberwithin{equation}{section}
\newcommand{\abs}[1]{\left\lvert#1\right\rvert}
\newcommand{\set}[1]{\left\{#1\right\}}
\newcommand*{\To}{\longrightarrow}
\newcommand*{\Rmn}[1]{\uppercase\expandafter{\romannumeral#1}}
\newcommand*{\dif}{\mathop{}\!\mathrm{d}}
\journal{XXX}
\begin{document}

\begin{frontmatter}

\title{Sinh-Gordon equations on finite graphs\tnoteref{s}}

\author[xtu]{Linlin Sun
}
    \ead{sunll@xtu.edu.cn}

    \address[xtu]{School of Mathematics and Computational Science, Xiangtan University, Xiangtan 411105, China}

\tnotetext[s]{The author acknowledges partial support from LMNS at Fudan University, where this work was conducted during a visit in May and June 2024. Gratitude is extended to the outstanding research environment provided by Fudan University. Appreciation is also expressed to Prof. Yang Yunyan and Prof. Hua Bobo for their valuable suggestions and support. }

\begin{abstract}
In this paper, we focus on the sinh-Gordon equation on  graphs. We introduce a uniform a priori estimate to define the topological degree for this equation with nonzero prescribed functions on finite, connected and symmetric graphs. Furthermore, we calculate this topological degree case by case and show several existence results. In particular, we prove that the classical sinh-Gordon equation with nonzero prescribed function is always solvable on such graphs. 
\end{abstract}

\begin{keyword}
sinh-Gordon equation \sep topological degree \sep existence results \sep solvability

 \MSC[2020] 35R02\sep 35A16 

\end{keyword}

\end{frontmatter}


\section{Introduction}

The classical elliptic sinh-Gordon equation on a Riemann surface $\Sigma$ reads as
\begin{align*}
    -\Delta_{\Sigma}u=\sinh u
\end{align*}
which plays a very important role in the study of the construction of constant mean curvature surfaces initiated by Wente \cite{Wen86counterexample}. We refer the reader to \cite{JosWanYeZho08blow} for more details. The sinh-Gordon equation has many applications in geometric analysis, for example in the work on the Wente torus \cite{Abr87constant}.

Beyond its theoretical importance on Riemann surfaces, graph analysis is crucial for various applications, including image processing and data mining. Among the numerous research avenues, studying partial differential equations from geometry or physics on graphs is particularly promising, as highlighted in references \cite{LiVraWanYao24hypersurfaces,HuaLiWan23class,LinWu17existence,ChoLiZho19discrete,GriLinYan16kazdan,GriLinYan16yamabe,GriLinYan17existence,HanShaZha20existence,Ge18kazdan,GuHuaSun23semi-linear,HaoSun23sharp,HuaXu23existence,LinYan21heat,LiuYan20multiple,ZhaZha18convergence,HuaWanYan21mean,HuaLinYau20existence} and related works.

In this paper,  we examine sinh-Gordon equations, a family of nonlinear equations, on a finite, connected and symmetric graph $G=(V,E)$
\begin{align}\label{eq:sinh-gordon}
     -\Delta u=h_{+}e^{u}+h_{-}e^{-u}-c
 \end{align}
 where $h_+$ and $h_-$ are two real prescribed functions defined on the graph and $c$ is a real number.  This family includes notable equations such as the Kazdan-Warner equation:
\begin{align}\label{eq:KW}
     -\Delta u=he^{u}-c
 \end{align}
 and the classical sinh-Gordon equation:
 \begin{align}\label{eq:sinh-Gordon-classical}
     -\Delta u=h \sinh u-c,
 \end{align}
 where $h$ is a real prescribed function defined on the graph. 

The Kazdan-Warner equation \eqref{eq:KW} on graphs has been explored by several mathematicians. For instance, Grigor’yan, Lin, and Yang  \cite[Theorems 1-3]{GriLinYan16kazdan} completely solved the solvability problem of the  Kazdan-Warner equation on a finite, connected and symmetric graph by utilizing the variational method. That is, they derived a discrete analog of the results by Kazdan and Warner \cite{KazWar74curvature}:
 \begin{itemize}
     \item When $c=0$, a solution to  \eqref{eq:KW} exists if and only if $h\equiv0$ or $h$ changes sign and $\int_{V}h\dif\mu<0$.
     \item If $c>0$, then \eqref{eq:KW} is solvable precisely when $\max_{V}h>0$.
     \item  For $c<0$, solvability to \eqref{eq:KW} requires  $\int_Vh\dif \mu<0$, and there exists a constant  $c_{h}\in[-\infty,0)$ depending on $h$ such that \eqref{eq:KW} is solvable for   $c\in(c_h,0)$, but  unsolvable for any $c<c_h$. 
 \end{itemize}
 The degree theory has proven to be an effective approach in studying partial differential equations in Euclidean spaces or Riemannian manifolds, see for example \cite{ChaYan87prescribing}. Drawing from the foundational work of Chen and Lin \cite{CheLin03topological} on mean field equations over closed Riemann surfaces, the author, in collaboration with Wang \cite{SunWan22brouwer}, innovatively employed the degree theory to resolve the Kazdan-Warner equation on finite graphs. This groundbreaking method was later broadened by Li, the author, and Yang \cite{LiSunYan24topological} to encompass Chern-Simons Higgs models on finite graphs. Very recently, Hou and Qiao \cite{houQia24solutions} also used the degree theory to extend the result obtained by \cite{LiSunYan24topological}.

The author and Wang \cite{SunWan22brouwer} demonstrated that every solution to the Kazdan-Warner equation \eqref{eq:KW} on a finite, connected and symmetric graph is uniformly bounded when the prescribed function $h\neq0$, i.e., $h$ is a nonzero function. Consequently, one can define the topological degree $D_{h,c}$ for \eqref{eq:KW} as follows
\begin{align*}
D_{h,c}=\lim_{R\To\infty}\deg\left(-\Delta-he^{(\cdot)}+c, B_{R}^{L^{\infty}(V)},0\right).
\end{align*}
Furthermore, under the assumption $h\neq0$, they computed the degree case by case and found (see \ref{sec:KW} for details):
\begin{align*}
    D_{h,c}=\begin{cases}
        -1,& c>0,\ \max_{V}h>0;\\
        -1,&c=0,\ \bar h<0<\max_{V}h;\\
        1,&c<0,\ \min_{V}h<\max_{V}h\leq0;\\
        0,&else.
   \end{cases}
    \end{align*}

 Zhu \cite{Zhu22mean}  examined the mean field equation associated with equilibrium turbulence and the Toda system on finite, connected and symmetric graphs. Specifically, he obtained that the equation
 \begin{align*}
     -\Delta u=c_1\left(\dfrac{h_1e^{u}}{\int_{V}h_1e^{u}\dif\mu}-\psi\right)-c_2\left(\dfrac{h_2e^{-u}}{\int_{V}h_2e^{-u}\dif\mu}-\psi\right)
 \end{align*}
 admits at least one solution provided that  the prescribed functions $h_1$ and $h_2$ are nonnegative, and the constants $c_1$ and $c_2$ are positive, and the function $\psi$ satisfies $\int_{V}\psi\dif\mu=1$.

The aim of this paper is to employ topological methods to investigate the sinh-Gordon equation \eqref{eq:sinh-gordon} on graphs. We will demonstrate that every solution to the sinh-Gordon equation \eqref{eq:sinh-gordon} on a finite, connected and symmetric graph is uniformly bounded, provided  $h_+\neq0$ and $h_-\neq0$.  This ensures that the topological degree $d_{h_+,h_-,c}$ for \eqref{eq:sinh-gordon} can be defined explicitly. Furthermore, we will derive the exact formula for the topological degree $d_{h_+,h_-,c}$. Consequently, we will apply the degree theory to establish existence criteria for \eqref{eq:sinh-gordon}.

The remaining part of this paper is briefly organized as follows. In \autoref{sec:main}, we introduce key concepts related to graphs and present our principal results. In \autoref{sec:preliminary}, we review fundamental properties of functions on finite, connected and symmetric graphs. In \autoref{sec:estimate}, we obtain a uniform a priori estimate, which gives a proof of the first main result \autoref{thm:main1}. The computation of the topological degree for the sinh-Gordon equation is given in \autoref{sec:degree}, offering a  proof of the second main result \autoref{thm:main2}. This leads to the existence result when the degree is non-zero, see \autoref{cor:existence}. Finally, in \autoref{sec:existence}, we discuss the existence result for the sinh-Gordon equation, which yields a proof of the third main result \autoref{thm:main3}.

Throughout the paper, we only consider finite,  connected and symmetric graphs. Moreover, we do not distinguish between sequences and subsequences unless necessary. Additionally, the capital letter $C$  denotes uniform constants that are independent of specific solutions and may vary in different situation.

\section{Main results}\label{sec:main}
Let $G=(V,E)$ be a finite, connected and symmetric graph with vertex set $V$ and  edge set $E$. The edges may be weighted. We define a weight function  $\omega:V\times V\To\mathbb{R}$ such that  $\omega_{xy}=\omega_{yx}\geq0$ and
\begin{align*}
\omega_{xy}>0\ \Longleftrightarrow\ x\sim y \Longleftrightarrow\  xy\in E.
\end{align*}
This means $\omega_{xy}>0$ precisely when $x$ and $y$ are neighboring vertices.  
The graph $G=(V,E)$ is said to be connected if, for any two vertices $x,y\in V$ there exist vertices $x_i\in V$  with $x=x_1, y=x_m$ such that
\begin{align*}
\omega_{x_ix_{i+1}}>0,\quad i=1,\dotsc,m-1.
\end{align*}
The finiteness of  $G=(V,E)$ stems from the finite number of its vertices. We introduce a positive function (vertex measure) $\mu$ on $V$ and define the ($\mu$-)Laplace operator $\Delta$ as
\begin{align}\label{mu-Laplace}
\Delta u(x)\coloneqq\dfrac{1}{\mu_x}\sum_{y\sim x}\omega_{xy}\left(u(y)-u(x)\right),\quad x\in V, \quad u\in L^{\infty}\left(V\right).
\end{align}
For any function $f\in L^{\infty}\left(V\right)$, the integral of $f$ over $V$ is defined by
\begin{align*}
\int_Vf\dif\mu\coloneqq\sum_{x\in V}f(x)\mu_x.
\end{align*}
The Green formula follows as:
\begin{align*}
\int_{V}\Delta u\cdot v\dif\mu=-\int_{V}\Gamma\left(u,v\right)\dif\mu,
\end{align*}
where $\Gamma$, the associated gradient form, is defined by 
\begin{align*}
\Gamma(u,v)(x)\coloneqq\dfrac{1}{2\mu_x}\sum_{y\in V}\omega_{xy}\left(u(y)-u(x)\right)\left(v(y)-v(x)\right).
\end{align*}
Lastly, we denote the square of the gradient  of $u$ at a vertex $x$ by
\begin{align*}
    \abs{\nabla u(x)}^2=\Gamma(u,u)(x).
\end{align*}

Firstly, we prove a uniform a priori estimate for sinh-Gordon equations. 
\begin{theorem}\label{thm:main1} 
Every solution to the sinh-Gordon equation \eqref{eq:sinh-gordon} with nonzero prescribed functions on a finite, connected and symmetric graph is uniformly bounded.
\end{theorem}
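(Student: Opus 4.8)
The plan is to argue by contradiction through a renormalization (blow-up) argument, using crucially that $V$ is finite, so that every bounded sequence of functions on $V$ has a pointwise-convergent subsequence.

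Suppose the estimate fails. Then there exist solutions $u_k$ of \eqref{eq:sinh-gordon} with $\lambda_k \coloneqq \norm{u_k}_{L^\infty(V)} \To \infty$. I would set $w_k \coloneqq u_k/\lambda_k$, so that $\norm{w_k}_{L^\infty(V)} = 1$, and after passing to a subsequence $w_k \To w$ pointwise on $V$ with $w\colon V \To [-1,1]$ and $\norm{w}_{L^\infty(V)} = 1$. Dividing \eqref{eq:sinh-gordon} by $\lambda_k$ gives
\begin{align*}
  -\Delta w_k = \frac{1}{\lambda_k}\left(h_+ e^{\lambda_k w_k} + h_- e^{-\lambda_k w_k} - c\right),
\end{align*}
and the left-hand side converges at every vertex to the finite value $-\Delta w$.

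The heart of the matter is to extract the structure of $w$ from the boundedness of the right-hand side. At a vertex $x$ with $w(x) > 0$ one has $\lambda_k w_k(x) \To +\infty$, hence $\lambda_k^{-1} e^{\lambda_k w_k(x)} \To +\infty$ while $\lambda_k^{-1} e^{-\lambda_k w_k(x)} \To 0$; so the displayed right-hand side can stay bounded only if $h_+(x) = 0$, and once $h_+(x) = 0$ it tends to $0$. Thus $w(x) > 0$ forces $h_+(x) = 0$ and $\Delta w(x) = 0$, and symmetrically $w(x) < 0$ forces $h_-(x) = 0$ and $\Delta w(x) = 0$; in particular $w$ is harmonic on $\set{x \in V \colon w(x) \neq 0}$. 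Since $\norm{w}_{L^\infty(V)} = 1$, either $\max_V w = 1$ or $\min_V w = -1$. In the first case, a maximum point $x_0$ satisfies $\Delta w(x_0) = 0$ together with $w \leq 1 = w(x_0)$, which forces every neighbor of $x_0$ to take the value $1$; hence $\set{x \colon w(x) = 1}$ is nonempty and closed under passing to neighbors, so by connectedness of $G$ it is all of $V$ and $w \equiv 1$. But then $w > 0$ everywhere, whence $h_+ \equiv 0$, contradicting $h_+ \neq 0$; the case $\min_V w = -1$ contradicts $h_- \neq 0$ in the same way.

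I expect the delicate point to be precisely this middle step: vertices with $w(x) = 0$ yield no information from the renormalized equation, so the maximum-principle propagation must be run entirely inside $\set{w > 0}$ (resp.\ $\set{w < 0}$) and then closed up using connectedness, rather than on all of $V$. Everything else is routine, and this contradiction scheme automatically produces a bound depending only on $G$, $h_+$, $h_-$, and $c$ — the form of uniformity needed later for the degree theory. As a fallback, I would instead argue directly: integrating \eqref{eq:sinh-gordon} gives $\int_V (h_+ e^u + h_- e^{-u})\dif\mu = c\int_V \dif\mu$, and combining this identity with the discrete maximum principle at extremal vertices and the elliptic bound $\norm{u - \bar u}_{L^\infty(V)} \leq C\norm{\Delta u}_{L^1(V)}$ one can first control $\int_V(\abs{h_+}e^u + \abs{h_-}e^{-u})\dif\mu$ and then propagate to a full $L^\infty$ bound; but this route is considerably more case-heavy than the blow-up one.
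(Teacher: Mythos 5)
Your argument is correct, and it takes a genuinely different route from the paper's. The paper works directly on the unscaled solutions: it sets $L_n=\max_V u_n-u_n$, uses the discrete Harnack inequality (\autoref{lem:L}) to show that a uniform bound $L_n(x)\leq C$ forces $u_n(x)\to+\infty$, hence $h_+(x)\leq 0$, hence $-\Delta u_n(x)\leq C$, and therefore $L_n(y)\leq C$ for every $y\sim x$; walking along a path from a maximum point (where $L_n=0$) it bounds the oscillation, concludes $u_n\to+\infty$ everywhere with $\abs{\Delta u_n}\leq C$, and reads off $h_+\equiv0$ from the equation, a contradiction. You instead rescale, $w_k=u_k/\lambda_k$, pass to a limit profile $w$ with $\norm{w}_{L^\infty(V)}=1$, show $h_+=0$ and $\Delta w=0$ on $\set{w>0}$ (resp.\ $h_-=0$, $\Delta w=0$ on $\set{w<0}$), and propagate the extremal value through neighbours via the equality case of the maximum principle plus connectedness, forcing $w\equiv1$ or $w\equiv-1$ and hence $h_+\equiv0$ or $h_-\equiv0$; this is sound, including the delicate point you flag, since every newly reached vertex again carries the value $1>0$ (or $-1<0$), so the propagation never leaves the set where the sign information is available. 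As for what each buys: your blow-down is shorter and dispenses with the auxiliary Harnack lemma, but it is purely qualitative — a compactness contradiction for fixed $(h_+,h_-,c)$ — whereas the paper's quantitative propagation of $L_n$ is precisely the mechanism that is upgraded in \autoref{thm:apriori} to an estimate uniform over families satisfying $(H1)$--$(H2)$, which the later homotopy-invariance and degree computations need; for \autoref{thm:main1} as stated, your primary argument is complete, and the sketched integral-identity fallback is unnecessary and weaker.
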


Consider  the  map $F_{h_+,h_-,c}:L^{\infty}\left(V\right)\To L^{\infty}\left(V\right)$  defined by
\begin{align*}
    F_{h_+,h_-,c}(u)=-\Delta u-h_+e^{u}-h_-e^{-u}+c.
\end{align*}
Applying the \autoref{thm:main1} we conclude that there is no solution on the boundary $\partial B^{L^{\infty}(V)}_{R}$ for $R$ large. Hence, the Brouwer degree
\begin{align*}
\deg\left(F_{h_+,h_-,c}, B^{L^{\infty}(V)}_{R}, 0\right)
\end{align*}
is well defined for $R$ large. According to the homotopy invariance, $\deg\left(F_{h_+,h_-,c}, B^{L^{\infty}(V)}_{R}, 0\right)$ is independent of $R$ when $R\to+\infty$. In particular, we define the topological degree $d_{h_+,h_-,c}$ for the sinh-Gordon equation \eqref{eq:sinh-gordon} on the finite, connected and symmetric graph $G=(V,E)$ as follows
\begin{align*}
    d_{h_+,h_-,c}=\lim_{R\to\infty}\deg\left(F_{h_+,h_-,c},B_{R}^{L^{\infty}\left(V\right)},0\right).
\end{align*}
Notice that every solutions to the equation \eqref{eq:sinh-gordon} are critical points of the following functional
 \begin{align*}
     L^{\infty}(V)\ni u\mapsto J_{h_+,h_-,c}(u)=\int_{V}\left(\dfrac12\abs{\nabla u}^2-h_+e^{u}+h_-e^{-u}+cu\right)\dif\mu.
 \end{align*}
 Thus, if $J_{h_+,h_-,c}$ is a Morse function, i.e., every critical point of $J_{h_+,h_-,c}$ is nondegenerate, then
\begin{align}\label{eq:degree-local}
    \deg\left(F_{h_+,h_-,c}, B^{L^{\infty}(V)}_{R}, 0\right)=\sum_{u\in B^{L^{\infty}(V)}_{R}, F_{h_+,h_-,c}(u)=0}\mathrm{sgn}\det\left(\dif  F_{h_+,h_-,c}(u)\right)
\end{align}
provided $\partial B^{L^{\infty}(V)}_{R}\cap F_{h_+,h_-,c}^{-1}\left(\set{0}\right)=\emptyset$. 
For more details about the Brouwer degree and its various properties we refer the reader to Chang \cite[Chapter 3]{Cha05methods}.

Secondly, we give the exact expression of the topological degree for sinh-Gordon equations. 
\begin{theorem}\label{thm:main2}
Let $G=(V,E)$ be a finite, connected and symmetric graph.     If $h_+\neq0$ and $h_-\neq0$, then
    \begin{align*}
        d_{h_+,h_-,c}=\begin{cases}
        (-1)^{\#V_0},&V_0\coloneqq\set{x\in V:h_+(x)>0}=\set{x\in V: h_-(x)<0};\\
           0,&else.
        \end{cases}
    \end{align*}
\end{theorem}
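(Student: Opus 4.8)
The plan is to compute $d_{h_+,h_-,c}$ by homotopy invariance of the Brouwer degree, reducing the prescribed data to a normal form in which the solutions of \eqref{eq:sinh-gordon} can be described. The first task is to pin down what $d_{h_+,h_-,c}$ actually depends on. Homotoping $c$ linearly to $0$ keeps $h_\pm\not\equiv0$, so \autoref{thm:main1} — in the slightly stronger form of a uniform bound along the one–parameter family, which its proof should supply — shows no solution lies on $\partial B_R$ for $R$ large, and hence $d_{h_+,h_-,c}$ is independent of $c$. Likewise a convex combination of two prescribed pairs with the same pointwise sign pattern again has that sign pattern and stays admissible; provided the a priori bound is uniform along such a path — which should follow by checking, in the proof of \autoref{thm:main1}, that the bound depends only on the sign pattern together with an upper bound for $\lVert h_\pm\rVert_\infty$ and positive lower bounds for $\min_{\{h_+>0\}}h_+$ and $\min_{\{h_-<0\}}(-h_-)$, all controlled on a compact path inside a fixed sign class — $d$ depends on $h_\pm$ only through their signs. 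Thus we may take $c=0$ and $h_\pm$ to be $\{-1,0,1\}$-valued.

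Consider first the ``good'' case, $\{h_+>0\}=\{h_-<0\}=:V_0$, so that $h_+\le0$ and $h_-\ge0$ on $V_1:=V\setminus V_0$. If $V_0=\emptyset$, the functional $J_{h_+,h_-,c}$ is strictly convex and coercive: its Dirichlet part is convex with kernel the constants, the terms $-h_+e^{u}$ and $h_-e^{-u}$ are convex, and $-h_+e^{u}$ is strictly convex and coercive along the constants since $h_+\le0\not\equiv0$ (similarly $h_-e^{-u}$ in the other direction). Hence there is a unique nondegenerate critical point, at which $\mathrm{d}F$ is $-\Delta$ plus a nonnegative diagonal that is positive somewhere, so it is positive definite and $d=+1=(-1)^{\#V_0}$. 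If $V_0\neq\emptyset$, I rescale $h_\pm\mapsto M h_\pm$ on $V_0$ with $M$ large (the sign pattern, and so $d$, is unchanged) and run a blow-up argument: the a priori bound forces $|\sinh u|\le C/M\to0$ on $V_0$, so every solution tends to $0$ on $V_0$; passing to the limit yields the Dirichlet problem on $V_1$ with the good data $h_+\le0$, $h_-\ge0$ there, whose functional is strictly convex and coercive, giving a unique nondegenerate solution $v^\ast$; consequently, for $M$ large, \eqref{eq:sinh-gordon} has a unique solution $u_M\to(0,v^\ast)$ and it is nondegenerate. Its index is $\operatorname{sgn}\det(-\Delta-A_M)$ with $A_M$ a diagonal that is $\asymp 2M$ on $V_0$ and bounded and $\le0$ on $V_1$; writing the matrix in block form with $V_0$ first and taking a Schur complement, for $M$ large the determinant has the sign of $(-1)^{\#V_0}$ times $\det$ of the principal submatrix of $-\Delta$ on $V_1$ plus a nonnegative diagonal, and that submatrix is positive definite because the graph is connected and $V_0\neq\emptyset$. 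Hence $d=(-1)^{\#V_0}$.

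Now the ``else'' case, $\{h_+>0\}\neq\{h_-<0\}$, where the target is $d=0$. The substitution $u\mapsto-u$ gives $F_{h_+,h_-,c}\circ(-\mathrm{id})=(-\mathrm{id})\circ F_{-h_-,-h_+,-c}$, whence $d_{h_+,h_-,c}=d_{-h_-,-h_+,-c}$, and combined with the $c$-independence this equals $d_{-h_-,-h_+,c}$; so one may assume there is a vertex $x_0$ with $h_+(x_0)>0$ and $h_-(x_0)\ge0$. The basic non-existence input is that if $h_+\ge0$ and $h_-\ge0$ everywhere with $h_+\not\equiv0$, then integrating \eqref{eq:sinh-gordon} gives $\int_V(h_+e^{u}+h_-e^{-u})\,\mathrm{d}\mu=c\,\mu(V)$, impossible for $c<0$; so \eqref{eq:sinh-gordon} has no solution and $d=0$. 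I would then show, by a case analysis on the sign pattern (using the $u\mapsto-u$ symmetry, rescalings within a sign class, and the translation identity $d_{h_+,h_-,c}=d_{e^{s}h_+,e^{-s}h_-,c}$ coming from $F_{h_+,h_-,c}(v+s\mathbf 1)=F_{e^{s}h_+,e^{-s}h_-,c}(v)$), that every ``else'' configuration can be carried — by admissible homotopies that raise the negative part of $h_+$ or of $h_-$ one vertex at a time while $x_0$ keeps $h_\pm\not\equiv0$ — into this configuration or its dual $h_+\le0$, $h_-\le0$, $c>0$, the residual patterns where neither can be arranged being handled by a direct non-existence or index-cancellation argument; so $d=0$.

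The main obstacle, throughout, is the uniformity of the a priori estimate along the homotopies used: the degree genuinely jumps across the walls $\{h_+(x)=0\}$ and $\{h_-(x)=0\}$ — it must, since the admissible set of $(h_+,h_-,c)$ is connected for $\#V\ge2$ while the formula is not constant — so every homotopy must keep the sign pattern fixed or stay inside a region (the good region $\{\{h_+>0\}=\{h_-<0\}\}$, or the configurations entering the ``else'' reduction) where \autoref{thm:main1} upgrades to a family estimate and no solution escapes to $\pm\infty$. Establishing that enhanced estimate, together with the blow-up analysis pinning down the model solution in the good case, is where the real work lies; the index computations are then routine linear algebra.
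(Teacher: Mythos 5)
Your overall skeleton (a family a priori estimate for data with a fixed, quantified sign pattern, homotopy invariance in $c$ and within a sign class, then case-by-case index computations) is the paper's strategy, and several pieces are sound and essentially match it: the $c$-homotopy, the case $\set{h_+>0}=\set{h_-<0}=\emptyset$ (degree $+1$ by convexity, as in \autoref{thm:first}), and the reduction $h_+\mapsto h_+^+$ followed by the integration non-existence argument when $h_-\geq0$ (as in \autoref{thm:second}, \autoref{thm:second2}). But two genuine gaps remain. First, the hardest ``else'' subcase, $\max_V h_+>0$, $\min_V h_-<0$ with $\set{h_+>0}\neq\set{h_-<0}$, is left as a placeholder. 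Your only non-existence input ($h_+\geq0$, $h_-\geq0$, $c\leq0$) cannot be reached from this configuration by the tools you allow: the $u\mapsto-u$ symmetry, the translation identity and rescalings all preserve the sign sets, while ``raising the negative part of $h_-$ one vertex at a time'' past zero changes the sign pattern, i.e.\ crosses exactly the walls where, as you yourself observe, the uniform estimate fails and the degree genuinely jumps (switching off one vertex of $\set{h_-<0}$ can move you from the $(-1)^{\#V_0}$ class to the $0$ class); ``a direct non-existence or index-cancellation argument'' is precisely the missing content. The paper's mechanism here (\autoref{thm:final}) is different and essential: take $x_0$ with $h_-(x_0)<0$, $h_+(x_0)\leq0$, set $c=\Lambda$ large, and play the upper bound $\max_V u_\Lambda\leq C+\ln\Lambda$ (from the maximum point) against the lower bound $\max_V u_\Lambda\geq C^{-1}\Lambda-C$ (from the vertex $x_0$) to get non-existence for all large $c$, hence degree $0$, and then homotope $c$. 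Nothing in your outline produces such an obstruction.

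Second, in the good case with $V_0\neq\emptyset$, your singular perturbation $h_\pm\mapsto Mh_\pm$ on $V_0$, $M\to\infty$, requires an a priori bound \emph{uniform in $M$}; without it neither ``$\abs{\sinh u}\leq C/M$ on $V_0$'', nor convergence of all solutions to $(0,v^*)$, nor uniqueness at a fixed large $M$ (which is what the degree computation actually uses) follows. The family estimate you postulate — constants depending on the sign pattern, an upper bound for $\lVert h_\pm\rVert_\infty$ and positive lower bounds for $h_+$ on $\set{h_+>0}$ and $-h_-$ on $\set{h_-<0}$ — is exactly the paper's \autoref{thm:apriori}, but its constant grows like $\lVert h_\pm\rVert_\infty\sim M$, so it only gives $\max_V\abs{u}\leq CM$; from this one can extract boundedness of $u$ on $V_0$, but not smallness of $\sinh u$ there, and nothing uniform on $V\setminus V_0$. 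You would need a separate argument excluding drift to infinity off $V_0$ as $M\to\infty$ (the paper avoids the issue in \autoref{thm:general} by deforming to $\Lambda\chi_{V_0}\left(e^u-e^{-u}\right)$ with a \emph{fixed} $\Lambda$, reducing to $V_0$ by harmonic extension and getting uniqueness from strict concavity of the reduced functional). Your Schur-complement index count is correct, but it rests on this unproved compactness; fix that (or switch to a fixed-$\Lambda$ reduction), and note also that the quantified family estimate itself, which you defer with ``should follow by checking'', is the main technical result of this part of the paper and needs an actual proof.
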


The Kronecker existence theorem suggests that at least one solution exists if the topological degree is nonzero. From this, we can derive the following existence result:
\begin{cor}\label{cor:existence}Consider a finite, connected and symmetric graph $G=(V,E)$. If $h_+\neq0, h_-\neq0$ and 
\begin{align*}
    \set{x\in V:h_+(x)>0}=\set{x\in V:h_-(x)<0},
\end{align*}
then the equation \eqref{eq:sinh-gordon} has at least one solution. Specifically, the classical sinh-Gordon equation \eqref{eq:sinh-Gordon-classical} with nonzero prescribed function admits a solution.

\end{cor}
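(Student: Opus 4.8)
The plan is to read off the existence claim directly from the degree computation in Theorem \ref{thm:main2} together with the Kronecker existence principle, and then verify that the classical sinh-Gordon equation \eqref{eq:sinh-Gordon-classical} is a special case. First I would observe that Theorem \ref{thm:main1} guarantees a uniform a priori bound for all solutions of \eqref{eq:sinh-gordon} when $h_+\neq 0$ and $h_-\neq 0$, so for $R$ large the set $\partial B^{L^\infty(V)}_R$ contains no zero of $F_{h_+,h_-,c}$; hence $d_{h_+,h_-,c}=\deg(F_{h_+,h_-,c},B^{L^\infty(V)}_R,0)$ is well-defined and independent of $R$. Under the hypothesis $V_0:=\{x\in V:h_+(x)>0\}=\{x\in V:h_-(x)<0\}$, Theorem \ref{thm:main2} gives $d_{h_+,h_-,c}=(-1)^{\#V_0}\neq 0$. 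By the Kronecker existence theorem (nonvanishing of the Brouwer degree forces the equation $F_{h_+,h_-,c}(u)=0$ to have a solution inside $B^{L^\infty(V)}_R$), the equation \eqref{eq:sinh-gordon} admits at least one solution.

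For the classical sinh-Gordon equation \eqref{eq:sinh-Gordon-classical} with $h\neq 0$, I would set $h_+=h$ and $h_-=-h$, so that $h_+e^u+h_-e^{-u}-c = h(e^u-e^{-u}) - c = 2h\sinh u - c$; after rescaling $h$ (or absorbing the factor $2$, which does not affect the sign structure) this is exactly \eqref{eq:sinh-Gordon-classical}. Since $h\neq 0$, both $h_+=h\neq 0$ and $h_-=-h\neq 0$, so the a priori estimate applies. Moreover $V_0=\{x:h_+(x)>0\}=\{x:h(x)>0\}$ and $\{x:h_-(x)<0\}=\{x:-h(x)<0\}=\{x:h(x)>0\}$, so the two sets coincide automatically and the hypothesis of the corollary is satisfied unconditionally. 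Therefore \eqref{eq:sinh-Gordon-classical} with nonzero prescribed function is always solvable.

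The substantive content of the corollary is entirely contained in Theorems \ref{thm:main1} and \ref{thm:main2}, so there is no real obstacle in the deduction itself; the only point requiring a little care is the bookkeeping that turns \eqref{eq:sinh-Gordon-classical} into an instance of \eqref{eq:sinh-gordon} with the correct sign pattern, and the remark that the constant factor arising from $\sinh u = \tfrac12(e^u-e^{-u})$ can be rescaled away without disturbing the condition $\{h_+>0\}=\{h_-<0\}$. I would present the argument in two short paragraphs: one invoking Theorem \ref{thm:main2} and the Kronecker theorem for the general statement, and one specializing to $h_\pm=\pm h$ for the classical case.
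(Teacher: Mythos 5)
Your proposal is correct and follows exactly the paper's route: the corollary is deduced from the nonvanishing degree $(-1)^{\#V_0}$ of \autoref{thm:main2} (well-defined thanks to \autoref{thm:main1}) via the Kronecker existence theorem, with the classical equation \eqref{eq:sinh-Gordon-classical} handled by taking $h_{\pm}=\pm h/2$, whose sign sets automatically coincide. No gaps.
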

Finally, utilizing the sub-super solution principle \ref{lem:sub-super} and the \autoref{thm:main2}, we arrive at the following existence result:
\begin{theorem}\label{thm:main3}
Consider a finite, connected and symmetric graph $G=(V,E)$. 
    Assume $\max_{V}h_+>0,\ h_-\geq0$.
    If $\int_{V}h_+\dif\mu<0$ and
    \begin{align*}
        \inf_{u\in L^{\infty}\left(V\right)}\max_{V}\left(\Delta u+h_+e^{u}+h_-e^{-u}\right)<0,
        \end{align*}
        then the sinh-Gordon equation \eqref{eq:sinh-gordon} has no solutions when $c<c^*_{h_+,h_-}$, has at least one solution for $c=c^*_{h_+,h_-}$, and has at least two solutions if $c^*_{h_+,h_-}<c<0$.
\end{theorem}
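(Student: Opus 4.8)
The plan is to identify the exact solvability range of \eqref{eq:sinh-gordon} in the parameter $c$, using the a priori estimate of \autoref{thm:main1} to handle the endpoint and the degree formula of \autoref{thm:main2} to produce the second solution. Write $c^{*}_{h_{+},h_{-}}$ for the quantity $\inf_{u\in L^{\infty}(V)}\max_{V}\left(\Delta u+h_{+}e^{u}+h_{-}e^{-u}\right)$ appearing in the hypotheses; it is $<0$ by assumption and, using $\int_{V}h_{+}\dif\mu<0$, finite. (If it were $-\infty$ the first two assertions would be vacuous and only the multiplicity claim would remain, with the same proof, so I assume $c^{*}_{h_{+},h_{-}}\in(-\infty,0)$.) The easy half is non-solvability: if $u$ solves \eqref{eq:sinh-gordon} with parameter $c$ then $\Delta u+h_{+}e^{u}+h_{-}e^{-u}\equiv c$, hence $c=\max_{V}\left(\Delta u+h_{+}e^{u}+h_{-}e^{-u}\right)\geq c^{*}_{h_{+},h_{-}}$, so there is no solution for $c<c^{*}_{h_{+},h_{-}}$.

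For $c\in(c^{*}_{h_{+},h_{-}},0)$ I would construct a sub- and a supersolution. Since $c<0$ and $h_{-}\geq0$, a sufficiently negative constant $\underline{u}\equiv-N$ is a strict subsolution: at vertices with $h_{-}>0$ the term $h_{-}e^{N}$ blows up, while at vertices with $h_{-}=0$ the first term vanishes and $-c>0$, so $h_{+}e^{-N}+h_{-}e^{N}-c>0$ everywhere once $N$ is large. By the definition of $c^{*}_{h_{+},h_{-}}$ as an infimum there is $w$ with $\max_{V}\left(\Delta w+h_{+}e^{w}+h_{-}e^{-w}\right)<c$, i.e.\ $w$ is a strict supersolution for parameter $c$, and $\underline{u}<w$ for $N$ large. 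The sub-supersolution principle \ref{lem:sub-super} then yields a solution for this $c$, and, by monotone iteration from $\underline{u}$, a minimal one. Hence \eqref{eq:sinh-gordon} is solvable for every $c\in(c^{*}_{h_{+},h_{-}},0)$.

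Next I would treat the endpoint $c=c^{*}_{h_{+},h_{-}}$ by compactness. Choose $c_{k}\downarrow c^{*}_{h_{+},h_{-}}$ and solutions $u_{k}$ for parameter $c_{k}$. By \autoref{thm:main1} when $h_{-}\not\equiv0$ (and, when $h_{-}\equiv0$, by the Kazdan--Warner bound recalled in the introduction, since then $h=h_{+}\neq0$), the $u_{k}$ are uniformly bounded in $L^{\infty}(V)$; here one should check that the bound is uniform for $c$ in a compact interval $[c^{*}_{h_{+},h_{-}},c_{1}]$ containing the $c_{k}$, which the argument of \autoref{sec:estimate} provides. Since $L^{\infty}(V)$ is finite-dimensional, a subsequence converges to some $u_{*}$, and passing to the limit in the equation gives a solution at $c=c^{*}_{h_{+},h_{-}}$.

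For the multiplicity when $c^{*}_{h_{+},h_{-}}<c<0$ I would combine a constrained minimization with the degree. With $\underline{u}\equiv-N$ and $w$ the strict sub/supersolutions above, $J_{h_{+},h_{-},c}$ attains its minimum over the compact box $K=\{u:\underline{u}\leq u\leq w\}$ at some $v_{c}$; since the sub/supersolutions are strict, perturbing $v_{c}$ at any vertex where it meets $\partial K$ strictly decreases $J_{h_{+},h_{-},c}$, so $v_{c}$ lies in the interior of $K$, hence is an unconstrained critical point of $J_{h_{+},h_{-},c}$ --- a solution of \eqref{eq:sinh-gordon} --- and, as $K$ contains an $L^{\infty}$-ball about $v_{c}$, an $L^{\infty}(V)$-local minimizer of $J_{h_{+},h_{-},c}$. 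Moreover $J_{h_{+},h_{-},c}$ is unbounded below: since $\max_{V}h_{+}>0$ one can pick $v$ with $\int_{V}h_{+}e^{v}\dif\mu>0$, and then $J_{h_{+},h_{-},c}(v+\log s)=-s\int_{V}h_{+}e^{v}\dif\mu+O(\log s)\to-\infty$ as $s\to+\infty$. Now suppose the solution set at this $c$ were finite with $v_{c}$ its only element. By \autoref{thm:main1} the degree $d_{h_{+},h_{-},c}$ equals the local Brouwer index of $F_{h_{+},h_{-},c}$ at $v_{c}$; but $F_{h_{+},h_{-},c}=\nabla J_{h_{+},h_{-},c}$, and the index of a gradient field at an isolated strict local minimum of a $C^{1}$ potential is $+1$, whereas \autoref{thm:main2} gives $d_{h_{+},h_{-},c}=0$ here (we are in the ``else'' case, since $\{h_{+}>0\}\neq\emptyset=\{h_{-}<0\}$ as $\max_{V}h_{+}>0$ and $h_{-}\geq0$) --- a contradiction. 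Therefore \eqref{eq:sinh-gordon} has at least two solutions for each such $c$. (Alternatively, a mountain pass from the local minimizer $v_{c}$, with the Palais--Smale condition supplied by the a priori analysis behind \autoref{thm:main1}, would give a second critical point.) I expect the main obstacle to lie in this last step: verifying that the boxed minimizer $v_{c}$ is a genuine $L^{\infty}$-local minimizer and that its local index is $+1$ --- once that is secured, cancellation against the vanishing total degree of \autoref{thm:main2} forces the second solution. A secondary point of care is the uniformity of the estimate of \autoref{thm:main1} as $c\downarrow c^{*}_{h_{+},h_{-}}$, together with the finiteness $c^{*}_{h_{+},h_{-}}>-\infty$, which is tied to $\int_{V}h_{+}\dif\mu<0$.
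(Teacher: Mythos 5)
Your proposal is correct in its main line and follows essentially the same route as the paper: non-existence below $c^*_{h_+,h_-}$ straight from the definition, existence on $(c^*_{h_+,h_-},0)$ via a constant subsolution and a near-optimal supersolution (the paper packages this as \autoref{lem:sub-super-1}), the endpoint $c=c^*_{h_+,h_-}$ by passing to the limit with the $c$-uniform a priori bound (the paper's \autoref{thm:apriori} is exactly the tool that supplies the uniformity you flag), and the second solution by playing a box-minimizer local minimum (index $+1$) against the vanishing total degree. Two differences are worth noting. First, the paper uses the endpoint solution $u_{c^*_{h_+,h_-}}$ as the strict supersolution in the box and then proves that the constrained minimizer is a \emph{strict} local minimizer by an explicit third/fourth-variation computation, whereas you use a function $w$ realizing the infimum within $c$ and invoke the standard fact that the index of a gradient field at an isolated critical point which is a local minimum is $+1$; both are legitimate (and your interiority claim is rigorously recoverable from \ref{lem:sub-super} together with the strong maximum principle \autoref{lem:maximum}, or by the one-sided variational inequality at a touching vertex). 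Second, and this is the one substantive omission: the finiteness $c^*_{h_+,h_-}>-\infty$ is where the hypothesis $\int_V h_+\dif\mu<0$ actually does its work in the paper (via the auxiliary quantity $c_{h_+}=\inf_u\max_V(\Delta u+h_+e^{u})$, the Kazdan--Warner sub-super solution argument and the comparison $e^{-u_c}\leq(\Delta+c)^{-1}h_+$); you assert finiteness without proof and fall back on vacuity of the first two claims if $c^*_{h_+,h_-}=-\infty$. That fallback does cover the literal statement of \autoref{thm:main3}, but it sidesteps the intended content (a genuine finite threshold), so if you want your argument to match the paper's you should supply the finiteness proof; otherwise your hypothesis $\int_V h_+\dif\mu<0$ is never used. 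Also, when $h_-\equiv0$ the degree-zero input should be taken from \autoref{thm:second} (or the Kazdan--Warner formula in \ref{sec:KW}) rather than \autoref{thm:main2}, which assumes $h_-\neq0$ --- a mis-citation the paper itself also makes by pointing to \autoref{thm:final}.
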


\section{Preliminaries}\label{sec:preliminary}

In this section, we present discrete versions of several key mathematical concepts, including the strong maximum principle, Kato's inequality, Hanack's inequality, and the elliptic estimate. As mentioned above, the graph $G=(V,E)$ is always assumed to be finite, connected and symmetric. 

We begin with the following strong maximum principle:
 
\begin{lem}[Strong maximum principle, cf. \cite{SunWan22brouwer}]\label{lem:maximum} If $u$ is not a constant function, then there exists $x_1\in V$ such that
\begin{align*}
u\left(x_1\right)=\max_{V}u,\quad\Delta u\left(x_1\right)<0.
\end{align*}
\end{lem}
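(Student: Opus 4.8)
The plan is to localize at the set where $u$ attains its maximum and then exploit connectedness of $G$. First I would put $M \coloneqq \max_{V} u$, which is attained because $V$ is finite, and consider the level set $A \coloneqq \set{x \in V : u(x) = M}$. This set is nonempty, and since $u$ is not a constant function it is a proper subset of $V$, so $V \setminus A \neq \emptyset$ as well.

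The next step is to find a vertex of $A$ having a neighbour outside $A$. Choose $a \in A$ and $b \in V \setminus A$; by connectedness there is a path $a = p_1, p_2, \dotsc, p_m = b$ with $\omega_{p_i p_{i+1}} > 0$ for every $i$. Let $i_0$ be the largest index with $p_{i_0} \in A$; then $i_0 < m$, the vertex $x_1 \coloneqq p_{i_0}$ satisfies $u(x_1) = M = \max_V u$, and $y \coloneqq p_{i_0 + 1}$ is a neighbour of $x_1$ with $\omega_{x_1 y} > 0$ and $u(y) < M$.

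Finally I would read off the sign of the Laplacian at $x_1$. From the definition \eqref{mu-Laplace},
\begin{align*}
\Delta u(x_1) = \frac{1}{\mu_{x_1}} \sum_{w \sim x_1} \omega_{x_1 w}\bigl(u(w) - u(x_1)\bigr),
\end{align*}
every summand is $\leq 0$ because $\omega_{x_1 w} \geq 0$ and $u(x_1) = M \geq u(w)$, while the summand for $w = y$ is strictly negative since $\omega_{x_1 y} > 0$ and $u(y) - u(x_1) = u(y) - M < 0$. As $\mu_{x_1} > 0$, this forces $\Delta u(x_1) < 0$, which is the assertion.

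There is no serious obstacle here; the only point deserving care is the extraction of a ``boundary'' vertex $x_1$ of the maximum set, which is exactly where finiteness (guaranteeing the maximum is attained) and connectedness of $G$ enter. The remainder is the elementary observation that a finite sum of nonpositive terms with at least one strictly negative term is strictly negative.
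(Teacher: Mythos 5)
Your proof is correct and is the standard argument for this lemma: the paper itself only cites \cite{SunWan22brouwer} without reproducing a proof, and your route (take a vertex of the maximum set $A=\set{x\in V: u(x)=\max_V u}$ adjacent, via connectedness, to a vertex outside $A$, then observe that $\Delta u$ there is a sum of nonpositive terms with one strictly negative term) is exactly the expected one. No gaps; the extraction of the boundary vertex $x_1$ and the strict negativity of the $\omega_{x_1 y}$-term are handled correctly.
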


Next, we introduce the following Kato's inequality:
\begin{lem}[Kato's inequality \cite{SunWan22brouwer}]\label{lem:kato}
\begin{align*}
\Delta u^+\geq\chi_{\set{u>0}}\Delta u.
\end{align*}
\end{lem}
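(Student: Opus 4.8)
The plan is to prove the inequality pointwise at an arbitrary vertex $x\in V$, splitting into two cases according to the sign of $u(x)$, and exploiting the nonnegativity of the weights together with the elementary relations $u^+\geq u$ and $u^+\geq0$ that hold everywhere on $V$. Since the Laplacian \eqref{mu-Laplace} at $x$ depends only on the values of the function at $x$ and at its neighbors, and since both $u^+$ and $u$ are genuine functions on the finite vertex set, the whole argument reduces to comparing two finite sums with nonnegative coefficients $\omega_{xy}$, with $\mu_x>0$.

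First I would fix $x$ and suppose $u(x)>0$, so that $\chi_{\set{u>0}}(x)=1$ and, crucially, $u^+(x)=u(x)$. Writing out the definition \eqref{mu-Laplace} of $\Delta u^+(x)$ and of $\Delta u(x)$ and subtracting, the center terms $-\omega_{xy}u^+(x)$ and $-\omega_{xy}u(x)$ coincide and hence cancel in the difference, leaving
\begin{align*}
\Delta u^+(x)-\Delta u(x)=\dfrac{1}{\mu_x}\sum_{y\sim x}\omega_{xy}\left(u^+(y)-u(y)\right).
\end{align*}
Because $u^+(y)=\max(u(y),0)\geq u(y)$ for every $y$ and each weight $\omega_{xy}\geq0$, every summand on the right is nonnegative; therefore $\Delta u^+(x)\geq\Delta u(x)=\chi_{\set{u>0}}(x)\Delta u(x)$, which is the desired inequality in this case.

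Next I would treat the complementary case $u(x)\leq0$, where $\chi_{\set{u>0}}(x)=0$ and the right-hand side of the claimed inequality vanishes, so that it suffices to show $\Delta u^+(x)\geq0$. Here $u^+(x)=0$, and hence
\begin{align*}
\Delta u^+(x)=\dfrac{1}{\mu_x}\sum_{y\sim x}\omega_{xy}\left(u^+(y)-u^+(x)\right)=\dfrac{1}{\mu_x}\sum_{y\sim x}\omega_{xy}u^+(y)\geq0,
\end{align*}
since $u^+(y)\geq0$ and $\omega_{xy}\geq0$. Combining the two cases yields the inequality at every vertex, and hence on all of $V$.

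Because the statement is purely local and the graph is finite, I anticipate no analytic obstacle; the only point requiring care is the boundary value $u(x)=0$, which I deliberately fold into the second case by using the nonstrict sign $u(x)\leq0$, so that the two cases $u(x)>0$ and $u(x)\leq0$ are exhaustive and match the indicator $\chi_{\set{u>0}}$ exactly. No maximum principle or global structure of $G$ is invoked—only the nonnegativity of the weights $\omega_{xy}$ and positivity of $\mu_x$, together with $u^+\geq u$ and $u^+\geq0$.
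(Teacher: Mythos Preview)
Your proof is correct: the pointwise case split on the sign of $u(x)$, together with $u^+\geq u$, $u^+\geq0$, and the nonnegativity of $\omega_{xy}$, yields the inequality exactly as you describe. The paper itself does not give a proof of this lemma but merely cites it from \cite{SunWan22brouwer}, so there is nothing to compare against; your argument is the standard elementary verification and is complete as written.
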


 The following Hanack inequality is also useful: 
 \begin{lem}[Hanack's inequality]\label{lem:L}
 Denote by $L=\max_{V}u-u$. For every $x\sim y$, we have the following Hanack inequality
 \begin{align}\label{eq:lem-L2}
     L(y)\leq\dfrac{\mu_x}{\omega_{xy}}\left(\dfrac{\sum_{z\sim x}\omega_{zx}}{\mu_x}L(x)-\Delta u(x)\right).
 \end{align}
 Moreover, if $\max_{V}u+\min_{V}u\geq0$, then we have for every $x\in V$
 \begin{align}\label{eq:lem-L1}
     -\Delta u(x)\leq \dfrac{\sum_{z\sim x}\omega_{zx}}{\mu_x}\left(2u(x)+L(x)\right).
 \end{align}
 
 \end{lem}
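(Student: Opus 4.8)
The plan is to derive both inequalities directly from the definition \eqref{mu-Laplace} of the Laplacian, exploiting only the nonnegativity of $L=\max_{V}u-u$ and, for the second bound, the hypothesis $\max_{V}u+\min_{V}u\geq0$. No maximum principle or iteration is needed; everything reduces to elementary manipulations of a single finite sum at a fixed vertex.

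For \eqref{eq:lem-L2}, the first observation is that $L$ differs from $-u$ by the constant $\max_{V}u$, so $\Delta L=-\Delta u$; concretely,
\[
\frac{1}{\mu_x}\sum_{y\sim x}\omega_{xy}\bigl(L(y)-L(x)\bigr)=-\Delta u(x).
\]
Clearing $\mu_x$ and isolating the neighbor sum gives $\sum_{y\sim x}\omega_{xy}L(y)=\bigl(\sum_{z\sim x}\omega_{zx}\bigr)L(x)-\mu_x\Delta u(x)$. Since every summand $\omega_{xy}L(y)$ is nonnegative (as $L\geq0$ and $\omega\geq0$), I would discard all terms except the one corresponding to the fixed neighbor $y$, obtaining $\omega_{xy}L(y)\leq\bigl(\sum_{z\sim x}\omega_{zx}\bigr)L(x)-\mu_x\Delta u(x)$. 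Dividing by $\omega_{xy}>0$, which is positive precisely because $x\sim y$, yields \eqref{eq:lem-L2}.

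For \eqref{eq:lem-L1}, I would start from $-\Delta u(x)=\mu_x^{-1}\sum_{y\sim x}\omega_{xy}\bigl(u(x)-u(y)\bigr)$ and bound each difference from above by a quantity independent of $y$. Using $u(y)\geq\min_{V}u$ gives $u(x)-u(y)\leq u(x)-\min_{V}u$, and the hypothesis $\max_{V}u+\min_{V}u\geq0$ converts this into $u(x)-u(y)\leq u(x)+\max_{V}u$. Finally, rewriting $\max_{V}u=u(x)+L(x)$ turns the right-hand side into $2u(x)+L(x)$. Substituting this uniform bound into the sum and factoring out $\sum_{y\sim x}\omega_{xy}/\mu_x$ produces \eqref{eq:lem-L1}.

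The argument is essentially bookkeeping, so the only real care points, rather than genuine obstacles, are tracking the identity $\max_{V}u=u(x)+L(x)$ correctly and using the sign hypothesis exactly where the term $-\min_{V}u$ must be replaced by $\max_{V}u$. It is also worth flagging explicitly that the passage from the full neighbor sum to a single term in the first part is valid only because $L$ is globally nonnegative.
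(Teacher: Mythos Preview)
Your proposal is correct and follows essentially the same approach as the paper. For \eqref{eq:lem-L2} the paper isolates the $y$-term in $-\Delta u(x)$ and bounds each remaining difference $u(x)-u(z)\geq -L(x)$, whereas you first record the exact identity $\sum_{y'\sim x}\omega_{xy'}L(y')=\bigl(\sum_{z\sim x}\omega_{zx}\bigr)L(x)-\mu_x\Delta u(x)$ via $\Delta L=-\Delta u$ and then drop nonnegative summands; these are two phrasings of the same computation, and your proof of \eqref{eq:lem-L1} is identical to the paper's.
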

 \begin{proof}
 On the one hand, notice that for every $x, z\in V$,
 \begin{align*}
     u(x)-u(z)\geq u(x)-\max_{V}u=-L(x).
 \end{align*}
 By the definition of the Laplacian \eqref{mu-Laplace}, we have for each $y\sim x$
 \begin{align*}
     -\Delta u(x)=&\dfrac{1}{\mu_x}\sum_{z\sim x}\omega_{xz}\left(u(x)-u(z)\right)\\
     =&\dfrac{\omega_{xy}}{\mu_x}\left(u(x)-u(y)\right)+\dfrac{1}{\mu_x}\sum_{z\sim x, z\neq y}\omega_{xz}\left(u(x)-u(z)\right)\\
     \geq&\dfrac{\omega_{xy}}{\mu_x}\left(u(x)-u(y)\right)-\dfrac{1}{\mu_x}\sum_{z\sim x, z\neq y}\omega_{xz}L(x)\\
     =&\dfrac{\omega_{xy}}{\mu_x}\left(L(y)-L(x)\right)-\dfrac{1}{\mu_x}\sum_{z\sim x, z\neq y}\omega_{xz}L(x)\\
     =&\dfrac{\omega_{xy}}{\mu_x}L(y)-\dfrac{1}{\mu_x}\sum_{z\sim x}\omega_{xz}L(x).
 \end{align*}
 We obtain the Hanack inequality  \eqref{eq:lem-L2}.

 On the other hand, if $\max_{V}u+\min_{V}u\geq0$, then we have for every $x, z\in V$
 \begin{align}\label{eq:L1-lem}
    u(x)-u(z)\leq u(x)-\min_{V}u\leq u(x)+\max_{V}u=2u(x)+L(x).
 \end{align}
Inserting the above estimate \eqref{eq:L1-lem} into \eqref{mu-Laplace}, we get 
 \begin{align*}
     -\Delta u(x)=\dfrac{1}{\mu_x}\sum_{y\sim x}\omega_{xy}\left(u(x)-u(y)\right)\leq\dfrac{1}{\mu_x}\sum_{y\sim x}\omega_{xy}\left(2u(x)+L(x)\right).
 \end{align*}
 We obtain the estimate \eqref{eq:lem-L1} and complete the proof.
 
 \end{proof}

Lastly, we present the following refined elliptic estimate:
\begin{lem}[Refined elliptic estimate \cite{SunWan22brouwer}]\label{lem:elliptic}
For all $u\in L^{\infty}\left(V\right)$,
\begin{align*}
\max_{V}u-\min_{V}u\leq \dfrac{A^{\#V-1}-1}{A-1}B\cdot\max_{V}\Delta u,
\end{align*}
where
\begin{align*}
    A=\max_{y\sim x}\dfrac{\sum_{z\sim x}\omega_{zx}}{\omega_{yx}}, \quad B=\max_{y\sim x}\dfrac{\mu_{x}}{\omega_{yx}}.
\end{align*}
\end{lem}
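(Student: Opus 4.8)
The plan is to iterate the discrete Harnack inequality \eqref{eq:lem-L2} of \autoref{lem:L} along a simple path joining a minimizer of $u$ to a maximizer of $u$. First I would apply \eqref{eq:lem-L2} with $u$ replaced by $-u$: writing $\ell\coloneqq u-\min_{V}u\ge 0$, so that $\max_{V}(-u)-(-u)=\ell$ and $\Delta(-u)=-\Delta u$, the inequality \eqref{eq:lem-L2} reads, for all $x\sim y$,
\begin{align*}
\ell(y)\le \frac{\sum_{z\sim x}\omega_{zx}}{\omega_{xy}}\,\ell(x)+\frac{\mu_{x}}{\omega_{xy}}\,\Delta u(x)\le A\,\ell(x)+B\max_{V}\Delta u,
\end{align*}
where the last step uses $\frac{\sum_{z\sim x}\omega_{zx}}{\omega_{xy}}\le A$, $\frac{\mu_{x}}{\omega_{xy}}\le B$, and the elementary fact that $\max_{V}\Delta u\ge 0$ (since $\int_{V}\Delta u\dif\mu=0$), which takes care of both signs of $\Delta u(x)$.

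Next I would pick $p,q\in V$ with $u(p)=\min_{V}u$ and $u(q)=\max_{V}u$ and, using connectedness, a path $p=p_{0}\sim p_{1}\sim\dots\sim p_{m}=q$ that repeats no vertex, so that $m\le \#V-1$. Since $\ell(p_{0})=0$, applying the displayed estimate successively at $x=p_{j}$, $y=p_{j+1}$ for $j=0,\dots,m-1$ gives, by a one-line induction,
\begin{align*}
\max_{V}u-\min_{V}u=\ell(p_{m})\le\left(\sum_{j=0}^{m-1}A^{j}\right)B\max_{V}\Delta u=\frac{A^{m}-1}{A-1}\,B\max_{V}\Delta u\le\frac{A^{\#V-1}}{A-1}\,B\max_{V}\Delta u,
\end{align*}
which is the claimed bound; the constant case and the degenerate case $\#V\le 2$ (where $A=1$ and the right-hand side is to be read as $+\infty$) are trivial.

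I expect the only genuinely delicate point — hence the main obstacle — to be ensuring that $\max_{V}\Delta u$, rather than $\max_{V}\abs{\Delta u}$ or $\max_{V}(-\Delta u)$, appears on the right-hand side. This is exactly what is arranged by passing to $\ell=u-\min_{V}u$ (so that the inhomogeneous term produced by Harnack is $+\frac{\mu_{x}}{\omega_{xy}}\Delta u(x)$) together with the observation $\max_{V}\Delta u\ge 0$, which lets one simply drop that term at vertices where $\Delta u(x)<0$. Beyond this, the proof is just the routine summation of a geometric series along a path of at most $\#V-1$ edges, so I do not anticipate any further difficulty.
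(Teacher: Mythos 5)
Your proposal is correct and takes essentially the same route as the paper: iterate the Harnack inequality \eqref{eq:lem-L2} of \autoref{lem:L} along a vertex-disjoint path of at most $\#V-1$ edges joining the two extremizers and sum the resulting geometric series. The only differences are cosmetic — you apply the Harnack inequality to $-u$ from the outset (running the path from a minimizer to a maximizer) where the paper substitutes $u\mapsto -u$ only at the end, and you make explicit the observation $\max_{V}\Delta u\geq0$ (from $\int_{V}\Delta u\dif\mu=0$), which the paper's argument uses implicitly.
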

\begin{proof}

Assume 
\begin{align*}
    \max_{V}u=u(\bar x),\quad \min_{V}u=u(\underline{x}).
\end{align*}
Choose a shortest path $x_0x_1\dots x_m$ connecting $x_0=\bar x$ and $x_{m}=\underline{x}$. We consider the function $L(x)=\max_{V}u-u$. According to the \autoref{lem:L}, we get
\begin{align}\label{eq:elliptic1}
    L(x_{i+1})\leq A\cdot L(x_i)+B\cdot\max_{V}\left(-\Delta u\right),\quad i=0,1,2,\dots,m-1.
\end{align}
Since $L(x_0)=0$, we obtain by induction
\begin{align}\label{eq:elliptic2}
    L(x_i)\leq \left(1+A+\dotsm+A^{i-1}\right)B\cdot\max_{V}\left(-\Delta u\right),\quad i=1,2,\dots,m.
\end{align}
To see this, since $L(x_1)\leq B\cdot\max_{V}\left(-\Delta u\right)$ according to \eqref{eq:elliptic1}, we may assume for some $1\leq i\leq m-1$
\begin{align*}
    L(x_i)\leq \left(1+A+\dotsm+A^{i-1}\right)B\cdot\max_{V}\left(-\Delta u\right).
\end{align*}
 Applying the estimate \eqref{eq:elliptic1}, we obtain
\begin{align*}
    L(x_{i+1})\leq& A\cdot\left(\left(1+A+\dotsm+A^{i-1}\right)B\cdot\max_{V}\left(-\Delta u\right)\right)+B\cdot\max_{V}\left(-\Delta \right)\\
    =&\left(1+A+\dotsm+A^{i}\right)B\cdot\max_{V}\left(-\Delta u\right).
\end{align*}
We obtain the estimate \eqref{eq:elliptic2}. 
In particular
\begin{align*}
    \max_{V}u-\min_{V}u=L(x_{m})\leq \left(1+A+\dotsm+A^{m-1}\right)B\cdot\max_{V}\left(-\Delta u\right). 
\end{align*}
Thus
\begin{align*}
\max_{V}u-\min_{V}u\leq \left(1+A+\dotsm+A^{\#V-2}\right)B\cdot\max_{V}\left(-\Delta u\right). 
\end{align*}
Replacing $u$ by $-u$, we obtain
\begin{align*}
    \max_{V}u-\min_{V}u\leq \left(1+A+\dotsm+A^{\#V-2}\right)B\cdot\max_{V}\Delta u=\dfrac{A^{\#V-1}-1}{A-1}B\cdot\max_{V}\Delta u. 
\end{align*}
We complete the proof.

\end{proof}

\section{A priori estimates}\label{sec:estimate}
 In this section, we give a priori estimate for the sinh-Gordon type equation \eqref{eq:sinh-gordon}. That is, we will give a proof of the \autoref{thm:main1}.
 
 To elucidate the core concept of the proof, we begin by examining the following example. 

 \begin{eg}\label{eg:example-a priori}
 Let $G=(V,E)$ be a graph with only two vertices. The sinh-Gordon equation \eqref{eq:sinh-gordon} becomes
    \begin{align}\label{eq:sinh-gordon-eg}
        \begin{cases}
            \frac{\omega_{x_1x_2}}{\mu_{x_1}}\left(u(x_1)-u(x_2)\right)=h_+(x_1)e^{u(x_1)}+h_{-}(x_1)e^{-u(x_1)}-c,\\
            \frac{\omega_{x_1x_2}}{\mu_{x_2}}\left(u(x_2)-u(x_1)\right)=h_+(x_2)e^{u(x_2)}+h_{-}(x_2)e^{-u(x_2)}-c.
        \end{cases}
    \end{align}
    We assume
    \begin{align}\label{eq:assumption-eg}
        \abs{h_+(x_1)}+\abs{h_+(x_2)}>0,\quad \abs{h_-(x_1)}+\abs{h_-(x_2)}>0.
    \end{align}
Assume $u$ solves the equation \eqref{eq:sinh-gordon-eg}.     Without loss of generality, assume 
    \begin{align*}
        u(x_1)\geq \abs{u(x_2)}.
    \end{align*}
    If $h_+(x_1)>0$, then 
    \begin{align*}
        h_+(x_1)e^{u(x_1)}+h_{-}(x_1)e^{-u(x_1)}-c=\dfrac{\omega_{x_1x_2}}{\mu_{x_1}}\left(u(x_1)-u(x_2)\right)\leq\dfrac{2\omega_{x_1x_2}}{\mu_{x_1}}u(x_1)
    \end{align*}
    which implies
    \begin{align*}
        u(x_1)\leq C.
    \end{align*}
    If $h_+(x_1)\leq0$, then
    \begin{align*}
        \dfrac{\omega_{x_1x_2}}{\mu_{x_1}}\left(u(x_1)-u(x_2)\right)=h_+(x_1)e^{u(x_1)}+h_{-}(x_1)e^{-u(x_1)}-c\leq C
    \end{align*}
    which implies
    \begin{align*}
       0\leq u(x_1)-u(x_2)\leq C.
    \end{align*}
    If $u(x_2)<0$, then $u(x_1)\leq C$. If $u(x_2)\geq0$, then 
    \begin{align*}
        \abs{h_+(x_1)}+\abs{h_+(x_2)}=&\abs{\left(\dfrac{\omega_{x_1x_2}}{\mu_{x_1}}\left(u(x_1)-u(x_2)\right)-h_{-}(x_1)e^{-u(x_1)}+c\right)e^{-u(x_1)}}\\
        &+\abs{\left(\dfrac{\omega_{x_1x_2}}{\mu_{x_2}}\left(u(x_2)-u(x_1)\right)-h_{-}(x_2)e^{-u(x_2)}+c\right)e^{-u(x_2)}}\\
        \leq& Ce^{-u(x_1)},
    \end{align*}
    which implies
    \begin{align*}
        u(x_1)\leq C.
    \end{align*}
    That is, we obtain a uniform a priori estimate for the sinh-Gordon equation \eqref{eq:sinh-gordon-eg} under the assumption \eqref{eq:assumption-eg}. 
 \end{eg}
 
 Now, drawing inspiration from the \autoref{eg:example-a priori} provided above, we can present a comprehensive proof of the \autoref{thm:main1}. 
 
 \begin{proof}[Proof of the \autoref{thm:main1}]
To proceed with the proof by contradiction, we will assume that there exists a sequence of real functions $\set{u_n}$ on a finite, connected and symmetric graph $G=(V, E)$ satisfying
 \begin{align}\label{eq:sinh-gordon-n}
     -\Delta u_n=h_{+}e^{u_n}+h_{-}e^{u_n}-c,\quad\text{in}\ V,
 \end{align}
 and
 \begin{align}\label{eq:assumption-n}
     \max_{V}u_n=\max_{V}\abs{u_n}\to\infty,
 \end{align}
 as $n\to\infty$. Our goal is to show that this leads to a contradiction, thereby proving the original statement.
 According to the a priori estimate stated by Sun and Wang \cite[Theorem 2.1]{SunWan22brouwer}, we may assume $h_+\neq0$ and $h_-\neq0$.

 We consider the functions $L_n:V\To\mathbb{R}$ defined by
 \begin{align}\label{eq:L0}
     L_n=\max_V{u_n}-u_n.
 \end{align}
 It follows from \eqref{eq:assumption-n} that \begin{align*}
     \max_{V}u_n+\min_{V}u_n\geq0.
 \end{align*}
According to the \autoref{lem:L}, we obtain
\begin{align}\label{eq:L2}
     -\Delta u_n\leq C\left(u_n^++L_n\right),
 \end{align}
 and
\begin{align}\label{eq:L5}
     L_n(y)\leq C\left(\left(-\Delta u_n(x)\right)^{+}+L_n(x)\right),\quad\forall y\sim x.
 \end{align}

Insert \eqref{eq:L2} into the equation \eqref{eq:sinh-gordon-n} to obtain
 \begin{align}\label{eq:L3}
     h_+e^{u_n}+h_-e^{-u_n}\leq C\left(u_n^++L_n+1\right).
 \end{align}
   If $L_n(x)\leq C$, then by the definition \eqref{eq:L0} and the assumption \eqref{eq:assumption-n}, we get $u_n(x)\to+\infty$ as $n\to\infty$. 
The estimate \eqref{eq:L3} then gives $h_+(x)\leq0$ which implies from \eqref{eq:sinh-gordon-n} that 
 \begin{align}\label{eq:L4}
     -\Delta u_n(x)\leq h_-(x)e^{-u_n(x)}-c\leq C.
 \end{align}
 It follows from \eqref{eq:L4} and \eqref{eq:L5} that
 \begin{align*}
     L_n(y)\leq C,\quad\forall y\sim x.
 \end{align*}
 In other words, if $L_n(x)$ is uniformly bounded from above, then $L_n(y)$ is also uniformly bounded from above for every neighbors $y$ of $x$. That is 
 \begin{align}\label{eq:L6}
     L_n(x)\leq C\quad \Longrightarrow \quad  L_n(y)\leq C,\quad\forall y\sim x.
 \end{align}

Since the graph is finite, we may assume that for some points $\bar x, \underline{x}\in V$
 \begin{align*}
     u_n(\bar x)=\max_{V}u_n,\quad u_n(\underline{x})=\min_{V}u_n,\quad\forall n.
 \end{align*}
 Since the graph is connected, we can choose a shortest path $x_0x_1\dotsm x_m$ connected $x_0=\bar x$ and $x_m=\underline{x}$. We have
 \begin{align*}
     L_n(x_0)=L_n(\bar x)=0,\quad \forall n.
 \end{align*}
 Applying the estimate \eqref{eq:L6}, since $m\leq\#V-1$, we obtain by induction that
 \begin{align*}
     L_n(x)\leq C,\quad\forall x\in V.
 \end{align*}
 In particular
 \begin{align}\label{eq:osc-n}
     \max_{V}u_n-\min_{V}u_n=L_n(x_m)\leq C.
 \end{align}
 Together with the above estimate \eqref{eq:osc-n} and the assumption \eqref{eq:assumption-n}, we conclude that 
 \begin{align}\label{eq:L7}
     \lim_{n\to\infty}u_n(x)=+\infty,\quad\forall x\in V.
 \end{align}
 Moreover, by the definition of the Laplacian \eqref{mu-Laplace}, the estimate \eqref{eq:osc-n} also implies
 \begin{align}\label{eq:L8}
     \abs{\Delta u_n}\leq C\left(\max_{V}u_n-\min_{V}u_n\right)\leq C.
 \end{align}
 Combine \eqref{eq:sinh-gordon-n}, \eqref{eq:osc-n}, \eqref{eq:L7} and \eqref{eq:L8}, 
 \begin{align*}
     \abs{h_+}=e^{\max_{V}u_n-u_n}\abs{c-\Delta u_n-h_-e^{-u_n}} e^{-\max_{V}u_n}\leq Ce^{-\max_{V}u_n}
 \end{align*}
 which implies that $h_+\equiv0$. This is a contradiction and we complete the proof.

 \end{proof}

\section{Topological degrees}\label{sec:degree}
In this section, we calculate the topological degree on a case-by-case basis.

We start with a uniform a priori estimate for the sinh-Gordon equation \eqref{eq:sinh-gordon} that involves variable coefficients.

\begin{theorem}\label{thm:apriori}
 Let $G=(V,E)$ be a finite, connected and symmetric graph. Assume for some positive constant $K$, the following conditions holds:
 \begin{enumerate}[(H1)]
      \item $K^{-1}\leq\max_{V}\abs{h_{\pm}}\leq K,\  \abs{c}\leq K$, and
      \item $h_{\pm}^2\geq \pm K^{-1}h_{\pm}$.
 \end{enumerate}
 Then there is a uniform positive constant $C$ depending only on the graph such that every solution $u$ to the sinh-Gordon equation \eqref{eq:sinh-gordon} satisfies
\begin{align}\label{eq:max-uniform}
    \max_{V}\abs{u}\leq C K.
\end{align}

 \end{theorem}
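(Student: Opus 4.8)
The plan is to quantify the contradiction argument behind \autoref{thm:main1}, tracking how every constant depends on $K$. Write $M=\max_V u$. First I would reduce: the involution $u\mapsto -u$ sends a solution of \eqref{eq:sinh-gordon} for $(h_+,h_-,c)$ to one for $(-h_-,-h_+,-c)$ and preserves both (H1) and (H2), so we may assume $M=\max_V\abs u$; then $\min_V u\ge -M$, hence $\max_V u+\min_V u\ge 0$ and \autoref{lem:L} applies to $u$. Note that (H1) forces $K\ge 1$, and that (H2) gives the pointwise dichotomy ``$h_+(x)\le 0$ or $h_+(x)\ge K^{-1}$'' (from $h_+(h_+-K^{-1})\ge 0$). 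Set $L:=\max_V u-u$, fix the graph constant $C_*$ for which \eqref{eq:lem-L2} yields $L(y)\le C_*\bigl(L(x)+(-\Delta u(x))^{+}\bigr)$ whenever $y\sim x$, and define $A_0=0$, $A_{i+1}=C_*(A_i+2)$, $\widetilde C:=A_{\#V-1}$ --- all graph constants. I would then argue by contradiction, assuming $M>C^{\dagger}K$ for a large graph constant $C^{\dagger}$ to be pinned down at the end.

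The first substantive step is a \emph{local estimate}: at every vertex $x$ with $L(x)\le\widetilde C K$ one has $h_+(x)\le 0$. Indeed such an $x$ has $u(x)=M-L(x)\ge M-\widetilde C K\ge 0$, so \eqref{eq:lem-L1} gives $-\Delta u(x)\le C(2u(x)+L(x))\le CM$; feeding this into \eqref{eq:sinh-gordon} and using $\abs{h_-}\le K$, $\abs c\le K$, $e^{-u(x)}\le 1$ produces $h_+(x)e^{u(x)}\le CM$. If $h_+(x)>0$, the dichotomy forces $K^{-1}\le h_+(x)\le CMe^{-u(x)}\le CM\,e^{\widetilde C K}e^{-M}$, i.e.\ $M\le\log(CKM)+\widetilde C K\le\tfrac12 M+CK$, giving $M\le CK$ and contradicting $M>C^{\dagger}K$ once $C^{\dagger}$ is large. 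Hence $h_+(x)\le 0$, and then the equation also gives $-\Delta u(x)\le\abs{h_-(x)}e^{-u(x)}+\abs c\le 2K$.

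Next I would obtain \emph{oscillation control by propagation}. Pick a maximum point $x_0$ and a minimum point $x_m$ of $u$, joined by a shortest path $x_0x_1\cdots x_m$ with $m\le\#V-1$, so $L(x_0)=0$. By induction: if $L(x_i)\le A_iK$, then since $A_i\le\widetilde C$ the local estimate applies at $x_i$, giving $-\Delta u(x_i)\le 2K$, and the Hanack bound then yields $L(x_{i+1})\le C_*(A_iK+2K)=A_{i+1}K$. Thus $L(x_m)\le\widetilde C K$, i.e.\ $\max_V u-\min_V u\le\widetilde C K$, and so $\abs{\Delta u}\le C\widetilde C K$ on $V$ by the definition of $\Delta$. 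We may also assume $M\ge 2\widetilde C K$ (otherwise $\max_V\abs u=M\le 2\widetilde C K\le C^{\dagger}K$ and we are done), which forces $u\ge M-\widetilde C K\ge M/2$ everywhere. Finally, using (H1) pick $x^{*}$ with $\abs{h_+(x^{*})}\ge K^{-1}$; rearranging \eqref{eq:sinh-gordon} at $x^{*}$ and using $\abs{\Delta u}\le C\widetilde C K$, $\abs c\le K$, $\abs{h_-}\le K$, $e^{-u(x^{*})}\le 1$ gives $\abs{h_+(x^{*})}e^{u(x^{*})}\le C\widetilde C K+2K\le CK$; together with $e^{u(x^{*})}\ge e^{M/2}$ and $\abs{h_+(x^{*})}\ge K^{-1}$ this yields $e^{M/2}\le CK^{2}$, i.e.\ $M\le CK$, again contradicting $M>C^{\dagger}K$. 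This contradiction establishes $M\le C^{\dagger}K$, and since $\max_V\abs u=M$ we obtain \eqref{eq:max-uniform} with $C=C^{\dagger}$.

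The main obstacle is the bookkeeping of constants. The threshold $\widetilde C K$ below which one controls the sign of $h_+$ is itself $K$-dependent, while each Hanack step inflates $L$ by a bounded factor; one must therefore choose $\widetilde C$ (via the recursion $A_{i+1}=C_*(A_i+2)$) large enough that the threshold survives all $\le\#V-1$ propagation steps, yet keep $\widetilde C$ a pure graph constant, and must check that the stray factor $e^{\widetilde C K}$ appearing in the local estimate is harmless precisely because $M\gg K$ under the contradiction hypothesis. Once this is organized, everything else is a routine quantification of the proof of \autoref{thm:main1}; the only structural novelty over that proof is the final use of (H1)--(H2) at the point $x^{*}$ to convert the oscillation bound into the bound on $M$ itself.
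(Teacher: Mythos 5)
Your argument is correct and follows essentially the same route as the paper's proof of \autoref{thm:apriori}: the same symmetry reduction to $\max_V u+\min_V u\ge 0$, the same Hanack-based propagation of $L=\max_V u-u$ along a shortest path from the maximum point to the minimum point, and the same final step playing $\max_V\abs{h_+}\ge K^{-1}$ against the resulting bound $\abs{\Delta u}\le CK$ and $e^{-\min_V u}$. The only minor differences are presentational: you argue by contradiction from $M>C^{\dagger}K$ and exploit (H2) as the dichotomy ``$h_+\le 0$ or $h_+\ge K^{-1}$'' to force $h_+(x)\le 0$ at small-$L$ vertices, whereas the paper establishes the unconditional pointwise bound $h_+(x)e^{u(x)}\le C\left(L(x)+K\right)$ by solving a quadratic inequality in $e^{u(x)/2}$ (likewise using $h_+(x)\ge K^{-1}$) and then reads off the oscillation and $\min_V u$ bounds directly.
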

 
\begin{proof}
 
Assume $u$ is a solution to the sinh-Gordon equation \eqref{eq:sinh-gordon}, i.e.,
\begin{align*}
    -\Delta u=h_+e^{u}+h_{-}e^{-u}-c.
\end{align*}
    Without loss of generality, assume
    \begin{align*}
        \max_{V}u+\min_{V}u\geq0.
    \end{align*}
    We consider the function $L=\max_{V}u-u$.  Under the assumption $(H1)$, according to \eqref{lem:L}, we obtain
    \begin{align}\label{eq:L1-uniform}
        -\Delta u\leq C\left(u^++L\right),
    \end{align}
    and
    \begin{align}\label{eq:L2-uniform}
        L(y)\leq C\left(\left(-\Delta u\right)^+(x)+L(x)\right),\quad\forall x\sim y.
    \end{align}
    
    Inserting the above estimate \eqref{eq:L1-uniform} into the sinh-Gordon equation \eqref{eq:sinh-gordon}, under the assumption $(H1)$, we have
    \begin{align}\label{eq:L3-uniform}
         h_+e^{u}+h_-e^{-u}=-\Delta u+c\leq C\left(u^++L+K\right).
    \end{align}
    If $h_+(x)>0$ and $u(x)>0$, then under the assumption $(H2)$ the above inequality \eqref{eq:L3-uniform} gives
    \begin{align*}
        h_+(x)e^{u(x)}\leq Ce^{u(x)/2}+C(L(x)+K)
    \end{align*}
    which implies
    \begin{align*}
        e^{u(x)/2}\leq\dfrac{C+\sqrt{C^2+4h_+(x)C(L(x)+K)}}{2h_+(x)}.
    \end{align*}
    We obtain
    \begin{align*}
        h_+(x)e^{u(x)}\leq \dfrac{C^2+2h_+(x)C(L(x)+K)}{h_+(x)}.
    \end{align*}
    Thus
    \begin{align}\label{eq:L4-uniform}
        h_+(x)e^{u(x)}\leq C\left(L(x)+K\right).
    \end{align}
    Notice that the above estimate \eqref{eq:L4-uniform} also holds when $h_+(x)\leq0$ or $u(x)\leq0$. Inserting \eqref{eq:L4-uniform} into \eqref{eq:sinh-gordon}, we get
    \begin{align}\label{eq:L5-uniform}
        -\Delta u(x)=h_+(x)e^{u(x)}+h_{-}(x)e^{-u(x)}-c\leq C\left(L(x)+K\right)
    \end{align}
    provided $u(x)\geq0$. It follows from \eqref{eq:L2-uniform} and \eqref{eq:L5-uniform} that
    \begin{align}\label{eq:L6-uniform}
      u(x)\geq 0\quad\Longrightarrow\quad  L(y)\leq C\left(\left(-\Delta u\right)^+(x)+L(x)\right)\leq C_0\left(L(x)+K\right),\quad\forall x\sim y.
    \end{align}
   Here the constant $C_0$ depends only on the graph.

Now we prove the a priori estimate \eqref{eq:max-uniform}.
 We may assume
 \begin{align}\label{eq:L7-uniform}
     \max_{V}\abs{u}=\max_{V}u>  \left(C_0+C_0^2+\dotsm+C_0^{\#V-1}\right)K.
 \end{align}
 Since the graph is finite, connected and symmetric, we can choose a shortest path $x_0x_1\dotsm x_m$ such that
 \begin{align*}
     u(x_0)=\max_{V}u,\quad u(x_m)=\min_{V}u.
 \end{align*}
 Then $m\leq\#V-1$ and
 \begin{align*}
     L(x_0)=\max_{V}u-u(x_0)=0.
 \end{align*}
 Since $u(x_0)\geq0$, it follows from \eqref{eq:L6-uniform} that
 \begin{align*}
     L(x_1)\leq C_0K.
 \end{align*}
 If $L(x_i)\leq  \left(C_0+C_0^2+\dotsm+C_0^i\right)K$ for some $1\leq i<m$, then
 \begin{align*}
     u(x_i)=\max_{V}u-L(x_i)>\left(C_0+C_0^2+\dotsm+C_0^{\#V-1}\right)K-\left(C_0+C_0^2+\dotsm+C_0^i\right)K\geq0,
 \end{align*}
 which implies from \eqref{eq:L6-uniform} that
 \begin{align*}
     L(x_{i+1})\leq C_0\left(\left(C_0+C^2+\dotsm+C_0^i\right)K+K\right)=\left(C_0+C_0^2+\dotsm+C_0^{i+1}\right)K.
 \end{align*}
 By using the method of induction, we conclude that
 \begin{align*}
     L(x_{i})\leq \left(C_0+C_0^2+\dotsm+C_0^i\right)K,\quad 1\leq i\leq m.
 \end{align*}
 In particular
 \begin{align*}
     \max_{V}u-\min_{V}u=L(x_m)\leq \left(C_0+C_0^2+\dotsm+C_0^m\right)K\leq \left(C_0+C_0^2+\dotsm+C_0^{\#V-1}\right)K<\max_{V}u.
 \end{align*}
 We obtain $\min_{V}u>0$. That is, under the assumption \eqref{eq:L7-uniform}, we have
 \begin{align}\label{eq:L8-uniform}
     \max_{V}u-\min_{V}u\leq \left(C_0+C_0^2+\dotsm+C_0^{\#V-1}\right)K,\quad \min_{V}u\geq0.
 \end{align}
 By the definition of the Laplacian \eqref{mu-Laplace},
 \begin{align}\label{eq:L9-uniform}
     \abs{-\Delta u(x)}=\abs{\dfrac{1}{\mu_x}\sum_{y\sim x}\omega_{xy}\left(u(x)-u(y)\right)}\leq\dfrac{1}{\mu_x}\sum_{y\sim x}\omega_{xy}\left(\max_{V}u-\min_{V}u\right)\leq C\left(\max_{V}u-\min_{V}u\right).
 \end{align}
 Inserting the estimate \eqref{eq:L8-uniform} into the above estimate \eqref{eq:L9-uniform}, we obtain
 \begin{align}\label{eq:L10-uniform}
     \abs{\Delta u}\leq C\left(C_0+C_0^2+\dotsm+C_0^{\#V-1}\right)K.
 \end{align}
Recall from the sinh-Gordon equation \eqref{eq:sinh-gordon} that
 \begin{align*}
     h_+=e^{-u}\left(c-\Delta u-h_-e^{-u}\right).
 \end{align*}
 Consequently, the assumption $(H1)$ together with the estimate \eqref{eq:L10-uniform} implies  
 \begin{align*}
     K^{-1}\leq\max_{V}\abs{h_+}\leq \left(\abs{c}+\max_{V}\abs{h_-}+\max_{V}\abs{\Delta u}\right)e^{-\min_{V}u}\leq C\left(C_0+C_0^2+\dotsm+C_0^{\#V-1}\right)Ke^{-\min_{V}u}.
 \end{align*}
 We obtain
 \begin{align}\label{eq:L11-uniform}
     \min_{V}u\leq \ln\left(C_0+C_0^2+\dotsm+C_0^{\#V-1}\right)+\ln C+\ln K.
 \end{align}
 Therefore it follows from \eqref{eq:L8-uniform}  and \eqref{eq:L11-uniform} that
 \begin{align*}
     \max_{V}\abs{u}=\max_{V}u=&\max_{V}u-\min_{V}u+\min_{V}u\\
     \leq& \left(C_0+C_0^2+\dotsm+C_0^{\#V-1}\right)K+\ln\left(C_0+C_0^2+\dotsm+C_0^{\#V-1}\right)+\ln C+\ln K.
 \end{align*}
 In particular, we obtain the desired uniform a priori estimate \eqref{eq:max-uniform} and complete the proof. 
\end{proof}

\subsection{\texorpdfstring{$h_+\leq 0, h_-\geq 0$.}{First case}}
\begin{theorem}\label{thm:first} Let $G=(V,E)$ be a finite, connected and symmetric graph.
    If $h_+\leq 0, h_-\geq 0$ and $\max_{V}\left(\abs{h_+}+\abs{h_-}\right)>0$, then
 \begin{align*}
     d_{h_+,h_-,c}=\begin{cases}
         1,&\min_{V}h_+<0,\ \max_{V}h_->0;\\
         1,&c<0,\ h_-\equiv0;\\
         1,&c>0,\ h_+\equiv0;\\
         0,&else.
     \end{cases}
 \end{align*}
\end{theorem}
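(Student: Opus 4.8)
The plan is to treat the generic branch $h_+\not\equiv 0,\ h_-\not\equiv 0$ — which under the standing hypotheses $h_+\le 0,\ h_-\ge 0$ is exactly $\min_{V}h_+<0$ and $\max_{V}h_->0$ — by a homotopy to constant coefficients, and the two degenerate branches $h_-\equiv 0$ and $h_+\equiv 0$ by reduction to the Kazdan--Warner equation. For the generic branch I would connect $(h_+,h_-,c)$ to $(-1,1,c)$ through the affine path $h_+^{t}=(1-t)h_++t(-1)$, $h_-^{t}=(1-t)h_-+t\cdot 1$, $t\in[0,1]$, keeping $c$ fixed. Along this path $h_+^{t}\le 0\le h_-^{t}$; moreover $\abs{h_\pm^{t}}=(1-t)\abs{h_\pm}+t$, so $\max_{V}\abs{h_\pm^{t}}$ is a convex combination of $\max_{V}\abs{h_\pm}>0$ and $1$, hence stays in a fixed compact subinterval of $(0,\infty)$; and condition (H2) of \autoref{thm:apriori} holds along the whole path precisely because of the signs of $h_\pm^{t}$. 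Thus (H1)--(H2) hold with a single constant $K$ uniformly in $t$, and \autoref{thm:apriori} supplies a $t$-independent a priori bound $\max_{V}\abs{u}\le CK$. Homotopy invariance of the Brouwer degree then gives $d_{h_+,h_-,c}=d_{-1,1,c}$.

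Next I would compute $d_{-1,1,c}$ directly. Let $s_0$ be the unique root of $2\sinh s_0=-c$ (unique since $s\mapsto 2\sinh s$ is an increasing bijection of $\mathbb{R}$); then $u\equiv s_0$ solves $-\Delta u=-e^{u}+e^{-u}-c=-2\sinh u-c$. If $u$ were a non-constant solution, \autoref{lem:maximum} applied to $u$ gives a vertex $x_1$ with $u(x_1)=\max_{V}u$ and $-2\sinh(u(x_1))-c=-\Delta u(x_1)>0$, hence $u(x_1)<s_0$; applying \autoref{lem:maximum} to $-u$ gives $x_2$ with $u(x_2)=\min_{V}u>s_0$, contradicting $u(x_2)\le u(x_1)$. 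So $u\equiv s_0$ is the only solution, and $\dif F_{-1,1,c}(s_0)$ is $-\Delta$ plus multiplication by the positive constant $2\cosh s_0$, whose eigenvalues are $\lambda_i+2\cosh s_0>0$ (with $\lambda_i\ge 0$ the eigenvalues of $-\Delta$); hence it is invertible with $\mathrm{sgn}\det\dif F_{-1,1,c}(s_0)=1$. Therefore $J_{-1,1,c}$ is a Morse function and \eqref{eq:degree-local} gives $d_{-1,1,c}=1$, so $d_{h_+,h_-,c}=1$ on the generic branch.

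For the degenerate branches I would reduce to Kazdan--Warner. If $h_-\equiv 0$, then $h_+\not\equiv 0$ (so $\min_{V}h_+<0$, $\max_{V}h_+\le 0$) and $F_{h_+,0,c}$ is literally the Kazdan--Warner map, whence $d_{h_+,0,c}=D_{h_+,c}$; the formula recalled in the introduction gives $1$ when $c<0$ and $0$ when $c\ge 0$, as claimed. If $h_+\equiv 0$, then $h_-\not\equiv 0$ and the substitution $v=-u$ identifies $F_{0,h_-,c}$ with $(-\mathrm{id})\circ\widetilde F\circ(-\mathrm{id})$, where $\widetilde F$ is the Kazdan--Warner map for the data $(-h_-,-c)$; since the ball $B_R^{L^{\infty}(V)}$ is symmetric under $u\mapsto -u$ and each factor $-\mathrm{id}$ on $\mathbb{R}^{\#V}$ contributes $\mathrm{sgn}\bigl((-1)^{\#V}\bigr)$, the two factors cancel and $d_{0,h_-,c}=D_{-h_-,-c}$, which by the same formula equals $1$ when $c>0$ and $0$ when $c\le 0$. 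In every case this agrees with the statement.

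The step I expect to be the main obstacle is the generic branch: one must check carefully that the affine homotopy keeps (H1)--(H2) valid with one constant $K$, so that \autoref{thm:apriori} really produces a bound that does not degenerate as $t$ varies, and one must get the sign of $\det\dif F_{-1,1,c}$ at the constant solution right. The orientation bookkeeping for the change of variable $v=-u$ in the case $h_+\equiv 0$ is a secondary place where a sign slip could occur.
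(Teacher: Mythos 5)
Your proposal is correct and follows essentially the same route as the paper: the same affine homotopy of $(h_+,h_-)$ to the constant pair $(-1,1)$ justified by the uniform bound of \autoref{thm:apriori} (the paper additionally deforms $c$ to $0$, which is immaterial), uniqueness of the resulting constant solution via the strong maximum principle \autoref{lem:maximum}, the linearization $-\Delta+2\cosh s_0\,\mathrm{Id}$ giving sign $+1$, and the degenerate branches $h_+\equiv0$ or $h_-\equiv0$ referred back to the Kazdan--Warner degree (the paper simply cites Sun--Wang, while you carry out the $u\mapsto-u$ conjugation explicitly, with correct orientation bookkeeping). One small caveat: for the branch $h_-\equiv0$, $c<0$ you should quote the Kazdan--Warner degree in the form established in the appendix (Case 3: $h\le0$, $\min_V h<0$ yields degree $1$), since the introduction's line ``$\min_V h<\max_V h\le0$'' as printed would miss a constant negative $h_+$, for which the degree is still $1$.
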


\begin{proof}
By Sun and Wang's result \cite[Theorem 2.3]{SunWan22brouwer}, without loss of generality, we may assume $\min_{V}h_+<h_+\leq0$ and $\max_{V}h_->h_-\geq0$. In this case, we will prove 
    \begin{align*}
        d_{h_+,h_-,c}=1.
    \end{align*}
    
Consider the following deformations
\begin{align*}
    \quad h_{+,t}=(1-t)h_+-t,\quad h_{-,t}=(1-t)h_-+t,\quad c_t=(1-t)c,\quad t\in[0,1].
\end{align*}
Assume $u_t$ is a solution to 
\begin{align*}
    -\Delta u_t=h_{+,t}e^{u_t}+h_{-,t}e^{-u_t}-c_t,\quad t\in[0,1].
\end{align*}
One can check that for every $t\in[0,1]$,
\begin{itemize}
    \item  the assumption $(H1)$ holds, i.e.,
    \begin{align*}
    0<\min\set{\max_{V}\abs{h_{\pm}},1}\leq\max_{V}\abs{h_{\pm,t}}\leq\max\set{\max_{V}\abs{h_{\pm}},1},\quad\abs{c_t}\leq\abs{c},
    \end{align*}
    \item and the assumptions $(H2)$ holds since
    \begin{align*}
        \max_{V}h_{+,t}\leq0,\quad\min_{V}h_{-,t}\geq0.
    \end{align*}
\end{itemize}
According to the \autoref{thm:apriori}, we conclude that $\set{u_t}_{t\in[0,1]}$ is uniformly bounded. By the homotopy invariance of the topological degree, we know that
 \begin{align*}
     d_{h_+,h_-,c}=d_{h_{+,0},h_{-,0},c_0}=d_{h_{+,1},h_{-,1},c_1}=d_{-1,1,0}.
 \end{align*}
 
 Notice that the equation 
 \begin{align}\label{eq:-110}
     -\Delta u=-e^{u}+e^{-u}=-2\sinh u
 \end{align}
 has a unique constant solution $u_c\equiv0$. To see this, it suffices to prove that the solution to the  equation \eqref{eq:-110} is unique.  If $u$ and $w$ solves the equation \eqref{eq:-110}, then
   \begin{align*}
       -\Delta(u-w)=-2\sinh u+2\sinh w.
   \end{align*}
   If $u\neq w$, then $u-w$ is not a constant function since the function $t\mapsto\sinh t$ is a strictly increasing function. We obtain by applying the strong maximum principle \autoref{lem:maximum} 
   \begin{align*}
       0<-2\sinh u(x_0)+2\sinh w(x_0)
   \end{align*}
   where $x_0\in V$ satisfies
   \begin{align*}
       u(x_0)-w(x_0)=\max_{V}(u-w).
   \end{align*}
   This implies $u<w$. Similar argument yields $u>w$ which is a contradiction. Hence, by homotopy invariance, we can compute the topological degree $d_{h_+,h_-,c}$ as follows
 \begin{align*}
     d_{h_+,h_-,c}=d_{-1,1,0}=\mathrm{sgn}\det\left(-\Delta+2\cdot\mathrm{Id}\right)=1.
 \end{align*}
\end{proof}

\subsection{\texorpdfstring{$\max_{V}h_+> 0, h_-\geq0$ or $h_+\leq 0, \max_{V}h_-<0$.}{Second case}}

\begin{theorem}\label{thm:second} Let $G=(V,E)$ be a finite, connected and symmetric graph.
    If $\max_{V}h_+>0$ and $h_-\geq0$, then  
    \begin{align*}
       d_{h_+,h_-,c}=\begin{cases}
       -1,&h_-\equiv0,\ c>0;\\
       -1,&h_-\equiv0,\ \int_{V}h_+\dif\mu<0,\ c=0;\\
        0,&else.
    \end{cases} 
    \end{align*}
    
\end{theorem}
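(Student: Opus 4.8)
The plan is to distinguish the cases $h_-\equiv0$ and $h_-\not\equiv0$. When $h_-\equiv0$, equation \eqref{eq:sinh-gordon} reduces to the Kazdan--Warner equation $-\Delta u=h_+e^u-c$ and $F_{h_+,0,c}$ is exactly the Kazdan--Warner operator $-\Delta-h_+e^{(\cdot)}+c$. Since $\max_{V}h_+>0$ forces $h_+\not\equiv0$, the a priori bound of Sun and Wang \cite{SunWan22brouwer} applies, so the degree is well defined and $d_{h_+,0,c}=D_{h_+,c}$. Inserting $\max_{V}h_+>0$ into their explicit formula for $D_{h,c}$ recalled in the introduction gives $D_{h_+,c}=-1$ when $c>0$, $D_{h_+,c}=-1$ when $c=0$ and $\int_Vh_+\dif\mu<0$, and $D_{h_+,c}=0$ otherwise (the subcase $c<0$ is vacuous, since $\max_{V}h_+>0$ contradicts $\max_Vh\le0$) --- precisely the asserted value.

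For $h_-\not\equiv0$ the claim is $d_{h_+,h_-,c}=0$, and I would establish it by deforming $(h_+,h_-,c)$ --- through configurations satisfying hypotheses $(H1)$--$(H2)$ of \autoref{thm:apriori} with one fixed constant $K$ --- to a configuration admitting no solution at all; the uniform a priori estimate of \autoref{thm:apriori} then makes the Brouwer degree homotopy-invariant along the path, and the degree of a map with empty zero set is $0$. Fix $K$ so large that $K^{-1}$ is below the least positive value attained by $h_+$ (this uses only that $h_+$ is a fixed function with $\max_{V}h_+>0$); then $h_+$ avoids the interval $(0,K^{-1})$ at every vertex, so the $h_+$-part of $(H2)$ holds, while its $h_-$-part holds whenever $h_-\ge0$. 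Now (i) homotope $c$ linearly to $c=1$ with $h_+,h_-$ fixed, and then (ii) homotope $h_-$ linearly to the constant function $N$ with $h_+$ and $c=1$ fixed, where $N$ is a large constant; each intermediate function $(1-t)h_-+tN$ stays nonnegative, and after enlarging $K$ to dominate $N$, $\max_V|h_+|$, $\max_Vh_-$ and $1$, conditions $(H1)$--$(H2)$ hold along the whole path with that single $K$. By homotopy invariance, $d_{h_+,h_-,c}=d_{h_+,N,1}$, with $N$ now the constant function of value $N$.

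It then remains to show that for $N$ large the equation $-\Delta u=h_+e^u+Ne^{-u}-1$ has no solution (all constants below depend only on the graph and on $h_+$, never on $N$). At a minimum vertex $x_1$ one has $-\Delta u(x_1)\le0$, hence $Ne^{-u(x_1)}\le1-h_+(x_1)e^{u(x_1)}\le1+\max_V|h_+|\,e^{u(x_1)}$; writing $t=e^{\min_{V}u}$ this says $N\le t+\max_V|h_+|\,t^2$, which forces $t\gtrsim\sqrt N$, i.e. $\min_{V}u\ge\tfrac12\ln N-C$, and therefore $Ne^{-u}\le Ne^{-\min_{V}u}\le C\sqrt N$ everywhere. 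Next examine a maximum vertex $x_2$. If $h_+(x_2)>0$, then $-\Delta u(x_2)\ge(\min_{\{h_+>0\}}h_+)\,e^{\max_{V}u}-1$ while also $-\Delta u(x_2)\le C(\max_{V}u-\min_{V}u)\le C\max_{V}u$ (using $\min_{V}u>0$), which is impossible since $\max_{V}u\ge\min_{V}u\to\infty$. If $h_+(x_2)<0$, then $|h_+(x_2)|e^{\max_{V}u}\le Ne^{-\max_{V}u}-1\le C\sqrt N$, so $\max_{V}u\le\tfrac12\ln N+C$, hence $\max_{V}u-\min_{V}u\le C$ and $|\Delta u|\le C$ on $V$; but at a vertex $x_0$ with $h_+(x_0)>0$ (which exists as $\max_{V}h_+>0$) we have $-\Delta u(x_0)\ge(\min_{\{h_+>0\}}h_+)\,e^{\min_{V}u}-1\gtrsim\sqrt N$, which exceeds $C$ for $N$ large --- a contradiction. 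Finally, if $h_+(x_2)=0$, then $0\le-\Delta u(x_2)=Ne^{-\max_{V}u}-1$ gives $\max_{V}u\le\ln N$, so at such a vertex $x_0$ one has $-\Delta u(x_0)\le C(\max_{V}u-\min_{V}u)\le C\ln N$ while still $-\Delta u(x_0)\gtrsim\sqrt N$, again impossible for $N$ large. In every case we reach a contradiction, so the equation has no solution, $d_{h_+,N,1}=0$, and the proof is finished.

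The step I expect to be the crux is this last non-existence claim, and within it the subcase where the maximum of a hypothetical solution sits on the zero set of $h_+$; what makes that subcase work is the choice $c=1>0$ made in step (i), which forces $\max_{V}u\le\ln N$ there and lets it lose to the bound $e^{\min_{V}u}\gtrsim\sqrt N$ felt wherever $h_+>0$. Checking that $(H1)$--$(H2)$ genuinely hold with a single $K$ along both homotopies, and that all graph-dependent constants remain independent of $N$ so they cannot absorb the gain from large $N$, are the routine but essential technical points.
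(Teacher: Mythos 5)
Your argument is correct, and for $h_-\equiv0$ it coincides with the paper's: both reduce to the Kazdan--Warner degree formula of Sun--Wang, which with $\max_Vh_+>0$ gives exactly the stated values. For the main case $h_-\not\equiv0$ you take a genuinely different route. The paper keeps $h_-$ fixed and deforms $h_{+,t}=h_+^+-(1-t)h_+^-$, $c_t=(1-t)c$; hypotheses $(H1)$--$(H2)$ of \autoref{thm:apriori} hold along this path, and at the endpoint the equation $-\Delta u=h_+^+e^u+h_-e^{-u}$ has no solution for the one-line reason that integrating over $V$ gives $0=\int_V\left(h_+^+e^u+h_-e^{-u}\right)\dif\mu>0$, so the degree vanishes. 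You instead freeze $h_+$, push $c$ to $1$ and $h_-$ to a large constant $N$, and rule out solutions of $-\Delta u=h_+e^u+Ne^{-u}-1$ for large $N$ by min-vertex/max-vertex estimates; this is essentially the blow-up-in-the-parameter mechanism the paper reserves for \autoref{thm:final}, and your case analysis (sign of $h_+$ at the maximum point, with $c=1>0$ rescuing the $h_+(x_2)=0$ subcase) is sound, with all constants independent of $N$ as required, so homotopy invariance plus the empty zero set gives degree $0$. The paper's choice of deformation buys a trivial endpoint (an integral identity instead of quantitative estimates); yours costs a heavier endpoint argument but needs no manipulation of $h_+$ at all. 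One bookkeeping point you use implicitly: besides dominating $N$, $\max_V\abs{h_\pm}$ and $\abs{c}$, the single $K$ must also satisfy $K^{-1}\leq\max_Vh_{-,t}=(1-t)\max_Vh_-+tN$ and $K^{-1}\leq\max_V\abs{h_+}$, which is precisely where the standing assumptions $h_-\not\equiv0$ and $\max_Vh_+>0$ enter; this is harmless but worth stating.
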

\begin{proof}
By Sun and Wang's result \cite[Theorem 2.3]{SunWan22brouwer}, without loss of generality, assume $\max_{V}h_->h_-\geq0$.
Set
\begin{align*}
    h_{+,t}=h_{+}^+-(1-t)h_{+}^-,\quad c_t=(1-t)c.
\end{align*}
Assume $u_t$ is a solution to the  the following equation
\begin{align*}
    -\Delta u_t=h_{+,t}e^{u_t}+h_{-}e^{u_t}-c_t,\quad t\in[0,1].
\end{align*}
One can check that
\begin{align*}
    0<\min\set{\max_{V}h_+^+, 1}\leq\max_{V}\abs{h_{+,t}}\leq\max\set{\max_{V}\abs{h_{+}},1},\quad \abs{c_t}\leq\abs{c},\\
    h_{+,t}(x)>0\ \Longrightarrow\quad h_+(x)>0\quad\ \Longrightarrow\quad h_{+,t}(x)=h_+(x)>0,\\
    \min_{V}h_-\geq0.
\end{align*}
Consequently, According to the \autoref{thm:apriori}, we conclude that $\set{u_t}_{t\in[0,1]}$ is uniformly bounded.  By the homotopy invariance of the topological degree, we know that
 \begin{align*}
     d_{h_+,h_-,c}=d_{h_{+,0},h_{-},c_0}=d_{h_{+,1},h_{-},c_1}=d_{h_{+}^+,h_-,0}.
 \end{align*}
 
One can check that $d_{h_+^+,h_-,0}=0$ since the equation 
\begin{align*}
    -\Delta u=h_{+}^+e^{u}+h_-e^{-u},
\end{align*}
has no solution. Hence
\begin{align*}
    d_{h_+,h_-,c}=d_{h_+^+,h_-,0}=0.
\end{align*}

\end{proof}

Similarly, we obtain
\begin{theorem}\label{thm:second2} Let $G=(V,E)$ be a finite, connected and symmetric graph.
    If $h_+\leq0$ and $\min_{V}h_-<0$, then
    \begin{align*}
        d_{h_+,h_-,c}=\begin{cases}
            -1,&h_+\equiv0,\  c<0;\\
            -1,&h_+\equiv0,\ \int_{V}h_-\dif\mu>0,\ c=0;\\
            0,&else.
        \end{cases}
    \end{align*}
\end{theorem}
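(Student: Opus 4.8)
The plan is to deduce the formula from \autoref{thm:second} by using the reflection $u\mapsto-u$, under which the sinh-Gordon equation is form-invariant; in this way essentially no new analysis is needed. The starting point is the elementary observation that $u$ solves $-\Delta u=h_+e^u+h_-e^{-u}-c$ if and only if $v=-u$ solves $-\Delta v=(-h_-)e^v+(-h_+)e^{-v}-(-c)$. Writing $\sigma:L^\infty(V)\To L^\infty(V)$ for the linear involution $\sigma u=-u$, this is precisely the conjugation identity
\begin{align*}
    F_{-h_-,-h_+,-c}=\sigma\circ F_{h_+,h_-,c}\circ\sigma,
\end{align*}
which holds for every triple $(h_+,h_-,c)$.

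The main step is to check that this conjugation does not change the topological degree. The map $\sigma$ is a linear automorphism of $L^\infty(V)$ with $\sigma^{-1}=\sigma$ and $\mathrm{sgn}\det\sigma=(-1)^{\#V}$, and it carries each ball $B_R^{L^\infty(V)}$ onto itself. Moreover $0\notin F_{h_+,h_-,c}(\partial B_R^{L^\infty(V)})$ for $R$ large --- by \autoref{thm:main1} when $h_+\not\equiv0$, and by Sun and Wang's a priori estimate \cite[Theorem 2.1]{SunWan22brouwer} for the Kazdan-Warner equation when $h_+\equiv0$ (recall that $\min_Vh_-<0$ forces $h_-\not\equiv0$) --- and hence also $0\notin F_{-h_-,-h_+,-c}(\partial B_R^{L^\infty(V)})$. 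By the standard behavior of the Brouwer degree under pre- and post-composition with a linear isomorphism (cf.\ \cite[Chapter 3]{Cha05methods}),
\begin{align*}
    \deg\left(\sigma\circ F_{h_+,h_-,c}\circ\sigma,B_R^{L^\infty(V)},0\right)&=\left(\mathrm{sgn}\det\sigma\right)^2\deg\left(F_{h_+,h_-,c},B_R^{L^\infty(V)},0\right)\\
    &=\deg\left(F_{h_+,h_-,c},B_R^{L^\infty(V)},0\right),
\end{align*}
so that $d_{h_+,h_-,c}=d_{-h_-,-h_+,-c}$.

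It then remains to translate the cases. Given $h_+\leq0$ and $\min_Vh_-<0$, set $\hat h_+=-h_-$, $\hat h_-=-h_+$ and $\hat c=-c$; then $\max_V\hat h_+=-\min_Vh_->0$ and $\hat h_-\geq0$, so \autoref{thm:second} applies to $(\hat h_+,\hat h_-,\hat c)$. Combining its conclusion with $d_{h_+,h_-,c}=d_{\hat h_+,\hat h_-,\hat c}$ gives: the degree equals $-1$ precisely when $\hat h_-\equiv0$ and $\hat c>0$, i.e.\ $h_+\equiv0$ and $c<0$, or when $\hat h_-\equiv0$, $\int_V\hat h_+\dif\mu<0$ and $\hat c=0$, i.e.\ $h_+\equiv0$, $\int_Vh_-\dif\mu>0$ and $c=0$; in all remaining cases it equals $0$. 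This is exactly the asserted formula.

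The only point that genuinely requires care is the sign in the degree identity: since $\sigma$ reverses orientation when $\#V$ is odd, one might worry that the degree gets multiplied by $-1$, and the content of the argument is precisely that the two factors $\mathrm{sgn}\det\sigma$ --- one from pre-composition, one from post-composition --- cancel. A self-contained alternative, closer to the proof of \autoref{thm:second}, is as follows: when $h_+\not\equiv0$, deform $(h_+,h_-,c)$ to $(h_+,-h_{-}^{-},0)$ through triples satisfying $(H1)$ and $(H2)$ for a suitable $K$, use \autoref{thm:apriori} to keep the solutions uniformly bounded along the homotopy, and observe that $-\Delta u=h_+e^u-h_{-}^{-}e^{-u}$ has no solution, since integrating over $V$ yields $0=\int_V(h_+e^u-h_{-}^{-}e^{-u})\dif\mu$ with integrand $\leq0$ and, as $h_+\not\equiv0$, not identically $0$; hence $d_{h_+,h_-,c}=0$. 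The remaining case $h_+\equiv0$ is the Kazdan-Warner equation $-\Delta u=h_-e^{-u}-c$, whose degree is the one recorded by Sun and Wang \cite{SunWan22brouwer}.
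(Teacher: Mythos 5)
Your proposal is correct, and it reaches the formula by a different route than the paper intends: the paper offers no separate proof of this theorem (it is stated as "similarly"), the implicit argument being a mirror image of the proof of \autoref{thm:second} --- a homotopy $h_{-,t}=h_-^{-}\cdot(-1)-(1-t)h_-^{+}$-type deformation controlled by \autoref{thm:apriori}, a nonexistence argument at the endpoint, and Sun--Wang \cite{SunWan22brouwer} for the degenerate Kazdan--Warner case $h_+\equiv0$. You instead derive the theorem as a formal corollary of \autoref{thm:second} via the involution $\sigma u=-u$, using the conjugation identity $F_{-h_-,-h_+,-c}=\sigma\circ F_{h_+,h_-,c}\circ\sigma$ and the fact that pre- and post-composition each contribute a factor $\mathrm{sgn}\det\sigma=(-1)^{\#V}$ which cancel, so $d_{h_+,h_-,c}=d_{-h_-,-h_+,-c}$; you also correctly check well-definedness of the degree on $\partial B_R^{L^{\infty}(V)}$ (via \autoref{thm:main1} when $h_+\not\equiv0$, and via the Kazdan--Warner a priori bound applied to $v=-u$ when $h_+\equiv0$, noting $\min_Vh_-<0$ forces $h_-\not\equiv0$) and translate the cases accurately. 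What the symmetry argument buys is economy and robustness: no new homotopy, no re-verification of $(H1)$--$(H2)$, and the case bookkeeping is automatic; what the paper's (implicit) mirrored argument buys is that it stays entirely inside the machinery already set up (\autoref{thm:apriori}, homotopy invariance, integration for nonexistence) without invoking the behavior of the Brouwer degree under composition with linear isomorphisms, which you must cite from \cite[Chapter 3]{Cha05methods}. Your appended self-contained alternative is also sound: the deformation fixes $h_{-}$ on $\set{h_-<0}$, so $(H2)$ holds with $K^{-1}\leq\min\set{\abs{h_-(x)}:h_-(x)<0}$ by finiteness of the graph, and the endpoint equation $-\Delta u=h_+e^{u}-h_-^{-}e^{-u}$ is killed by integrating over $V$; the only cosmetic remark is that your closing sentence for $h_+\equiv0$ still tacitly uses the reflection $u\mapsto-u$ to match the Kazdan--Warner normal form $he^{u}$, which is harmless since you have already justified that step.
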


\subsection{\texorpdfstring{$\max_{V}h_+>0, \min_{V}h_-<0$.}{Third case}}
This situation is more complicated. To obtain a deeper understanding of the potential variations in the topological degree at this time, we first examine the following example.
\begin{eg}
    Let $G=(V,E)$ be a graph with only two vertices $\set{x_1, x_2}$ and one edge $x_1x_2$. 
    Without loss of generality, assume $\mu_{x_1}=\mu_{x_2}=\omega_{x_1x_{2}}=1.$
    \begin{enumerate}[\text{Case} 1.]
     \item $h_+=(1,0), h_-=(-1,0)$. 
     
     The sinh-Gordon equation \eqref{eq:sinh-gordon-eg} is equivalent to 
\begin{align*}
    \begin{cases}
        x-y=e^{x}-e^{-x}-c,\\
        y-x=-c,
    \end{cases}
\end{align*}
which is equivalent to 
\begin{align*}
    c=e^x-e^{-x}-x+y=x-y.
\end{align*}
One can check that it has a unique solution $(x_c,y_c)=\left(\ln\left(c+\sqrt{c^2+1}\right), \ln\left(c+\sqrt{c^2+1}\right)-c\right)$. The degree is
\begin{align*}
    \mathrm{sgn}\det\begin{pmatrix}1-e^{x_c}-e^{-x_c}&-1\\
    -1&1
    \end{pmatrix}=\mathrm{sgn}\left(-e^{x_c}-e^{-x_c}\right)=-1.
\end{align*}

        \item $h_+=(1,0), h_-=(0,-1)$.
 
 The sinh-Gordon equation \eqref{eq:sinh-gordon-eg} is equivalent to 
\begin{align}\label{eq:eg1}
    \begin{cases}
        x-y=e^{x}-c,\\
        y-x=-e^{-y}-c.
    \end{cases}
\end{align}
which is equivalent to 
\begin{align*}
    c=e^x-x+y=x-y-e^{-y}.
\end{align*}
It $(x,y)$ solves the above equation \eqref{eq:eg1}, then we have
\begin{align*}
    f(x)\coloneqq2x-e^{x}=x+y-c=2y+e^{-y}=-f(-y).
\end{align*}
However, a direct computation implies $f\leq 2\ln2-2<0$. As a consequence, the above equation can not have any solution.  In particular, the topological degree must be zero.
\item $h_+=(1,1), h_-=(-1,0)$. 

The sinh-Gordon equation \eqref{eq:sinh-gordon-eg} is equivalent to
\begin{align}\label{eq:c}
\begin{cases}
     x-y=e^{x}-e^{-x}-c,\\
    y-x=e^{y}-c,
\end{cases}
\end{align}
which is equivalent to 
\begin{align*}
    c=e^{x}-e^{-x}-x+y=e^y-y+x.
\end{align*}
If $(x,y)$ solves the equation \eqref{eq:c},  then $x>y$ and $c>0$. Thus
\begin{align*}
    \ln\left(\dfrac{c}{2}+\sqrt{\dfrac{c^2}{4}+1}\right)<x=y-e^y+c\leq c-1,\\
 c-1-e^{c-1}+e^{1-c}\leq x-e^{x}+e^{-x}+c=y<\ln c.
\end{align*}
Hence, to compute the degree, we may assume $c=0$. However, when $c=0$, this equation has no solution. Consequently, the degree must be zero.

\item $h_+=(1,1), h_-=(-1,-1)$. 

The sinh-Gordon equation \eqref{eq:sinh-gordon-eg} is equivalent to
\begin{align*}
\begin{cases}
     x-y=e^{x}-e^{-x}-c,\\
    y-x=e^{y}-e^{-y}-c,
\end{cases}
\end{align*}
which is equivalent to
 \begin{align*}
     c=e^{x}-e^{-x}-x+y=e^y-e^{-y}-y+x.
 \end{align*}
 One can check that it has exactly one solution
\begin{align*}
    (x_c,y_c)=\left(\ln\left(\dfrac{c}{2}+\sqrt{\dfrac{c^2}{4}+1}\right),\ln\left(\dfrac{c}{2}+\sqrt{\dfrac{c^2}{4}+1}\right)\right).
\end{align*}
To compute the degree, we may assume $c\neq0$. Consequently, the degree is
\begin{align*}
    \mathrm{sgn}\det\left.\begin{pmatrix}
        1-e^x-e^{-x}&-1\\
    -1&1-e^{y}-e^{-y}
    \end{pmatrix}\right\vert_{x=y=\ln\left(\frac{c}{2}+\sqrt{\frac{c^2}{4}+1}\right)}=1.
\end{align*}

    \end{enumerate}
   
\end{eg}

To ensure readers have a clear knowledge of the proof, we will illustrate the process step by step, moving from simple to more complicated cases. The proof is organized into several theorems, as outlined below.

We begin with the following special and simple case. 

\begin{theorem}\label{thm:special}
Let $G=(V,E)$ be a finite, connected and symmetric graph.
    If $\min_{V}h_+>0, \max_{V}h_-<0$, then 
    \begin{align*}
        d_{h_+,h_-,c}=(-1)^{\#V}.
    \end{align*}
\end{theorem}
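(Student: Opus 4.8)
## Proof Plan for Theorem \ref{thm:special}

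The plan, modelled on the proof of \autoref{thm:first}, is to homotope the data $(h_+,h_-,c)$ to a constant model for which the solution set and the linearisation are both transparent.

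First, fix a large constant $N>0$, to be specified at the end, and deform the data via $h_{+,t}=(1-t)h_++tN$, $h_{-,t}=(1-t)h_--tN$ and $c_t=(1-t)c$ for $t\in[0,1]$. Since $\min_{V}h_+>0$ and $\max_{V}h_-<0$, along this path $h_{+,t}$ stays between two positive constants, $h_{-,t}$ between two negative constants, and $c_t$ stays bounded, all uniformly in $t$; hence the hypotheses $(H1)$ and $(H2)$ of \autoref{thm:apriori} hold with a single constant $K$ independent of $t$. By \autoref{thm:apriori} every solution along the homotopy is uniformly bounded, so none lies on $\partial B^{L^{\infty}(V)}_{R}$ for $R$ large, and homotopy invariance of the Brouwer degree gives $d_{h_+,h_-,c}=d_{N,-N,0}$; I will use this for $N$ large.

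Second, I would determine the solutions of the model equation $-\Delta u=Ne^{u}-Ne^{-u}=2N\sinh u$. Integrating over $V$ yields $\int_{V}\sinh u\dif\mu=0$, so a nonzero solution $u$ must have $M:=\max_{V}u>0$ and $m:=\min_{V}u<0$. From \eqref{mu-Laplace} one has $\abs{\Delta u(x)}\leq C_1(M-m)$ for every $x$, with $C_1=\max_{x}\frac{1}{\mu_x}\sum_{y\sim x}\omega_{xy}$ depending only on the graph. Evaluating the equation at a maximising vertex and using $\sinh M\geq M$ gives $2NM\leq C_1(M-m)$; evaluating it at a minimising vertex and using $\sinh m\leq m$ gives $-2Nm\leq C_1(M-m)$. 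Adding these, $2N(M-m)\leq 2C_1(M-m)$, hence $N\leq C_1$. Consequently, once $N>C_1$, the only solution of the model equation is $u\equiv0$.

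Third, I would read off the topological degree from \eqref{eq:degree-local}. With $F=F_{N,-N,0}$, that is $F(u)=-\Delta u-Ne^{u}+Ne^{-u}$, one computes $\dif F(u)=-\Delta-N(e^{u}+e^{-u})\,\mathrm{Id}$, so $\dif F(0)=-\Delta-2N\,\mathrm{Id}$. Since $-\Delta$ on a finite graph has only finitely many (hence bounded) eigenvalues, for $N$ large all eigenvalues of $\dif F(0)$ are strictly negative; thus $u\equiv0$ is a nondegenerate solution, $J_{N,-N,0}$ is a Morse function, and \eqref{eq:degree-local} gives $d_{N,-N,0}=\mathrm{sgn}\det\left(\dif F(0)\right)=(-1)^{\#V}$. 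Combined with the first step, $d_{h_+,h_-,c}=(-1)^{\#V}$, as asserted.

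I expect the only genuinely technical point to be the uniqueness at the model in the second step, i.e.\ ruling out nonzero solutions once $N$ is large; this is where one plays the two-sided bound on $\Delta u$ at the extremal vertices against the mean-value identity $\int_{V}\sinh u\dif\mu=0$, and the inequality collapses as soon as $N$ exceeds the graph constant $C_1$. The verification of $(H1)$ and $(H2)$ along the homotopy and the computation of $\mathrm{sgn}\det\left(-\Delta-2N\,\mathrm{Id}\right)$ are routine.
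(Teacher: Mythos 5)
Your proposal is correct, and its skeleton coincides with the paper's: deform $(h_+,h_-,c)$ linearly to a constant-coefficient sinh model $(\text{const},-\text{const},0)$, check (H1)--(H2) uniformly in $t$ so that \autoref{thm:apriori} plus homotopy invariance reduce the computation to the model, then evaluate the degree at the model's unique solution. The difference lies in how the model is analyzed. The paper fixes the constant to be $\Lambda$, the largest eigenvalue of $-\Delta$, and works variationally: since $\cosh u\geq 1$ and $\int_V\abs{\nabla\xi}^2\dif\mu\leq\Lambda\int_V\xi^2\dif\mu$, the functional $J_\Lambda$ is strictly concave and tends to $-\infty$, so it has exactly one critical point, and concavity simultaneously yields that all eigenvalues of the linearization there are negative, giving $(-1)^{\#V}$ without ever identifying the solution. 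You instead take an arbitrary large constant $N$, prove uniqueness of $u\equiv 0$ by the elementary extremal-vertex argument combined with the identity $\int_V\sinh u\dif\mu=0$ (forcing $N\leq C_1$ for any sign-changing solution), and then check nondegeneracy of $u\equiv 0$ by hand, using that the spectrum of $-\Delta$ is bounded so that $-\Delta-2N\,\mathrm{Id}$ is negative definite for $N$ large. Your route is more elementary (no concavity or coercivity discussion, and the unique solution is explicit), at the cost of having to choose $N$ larger than two graph constants; the paper's choice $\Lambda$ buys uniqueness and the sign of the Hessian in one stroke and works without knowing the solution explicitly, which is exactly the form reused later in the proof of \autoref{thm:general}, where the model solution on $V_0$ is no longer explicit. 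All the individual steps you flag as routine (verification of (H1)--(H2) along the path, the bound $\abs{\Delta u}\leq C_1(\max_V u-\min_V u)$, and the sign of $\det\left(-\Delta-2N\,\mathrm{Id}\right)$) do check out.
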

\begin{proof}
Let $\Lambda$ be the largest eigenvalue of $-\Delta$. We consider the following deformations
\begin{align*}
    h_{+,t}=(1-t)h_++t\Lambda, \quad h_{-,t}=(1-t)h_--t\Lambda,\quad c_t=(1-t)c,\quad t\in[0,1].
\end{align*}
Assume $u_t$ solves
\begin{align*}
    -\Delta u_t=h_{+,t}e^{u_t}+h_{-,t}e^{u_t}-c_t,\quad t\in[0,1].
\end{align*}
One can check that
\begin{align*}
    0<\min\set{\max_{V}\abs{h_{\pm}},\Lambda}\leq\max_{V}\abs{h_{\pm,t}}\leq\max\set{\max_{V}\abs{h_{\pm}},\Lambda},\quad\abs{c_t}\leq\abs{c},\\
    \min_{V}h_{+,t}\geq\min\set{\min_{V}h_{+},\Lambda}>0,\quad \max_{V}h_{-,t}\leq\max\set{\max_{V}h_-, -\Lambda}<0.
\end{align*}
According to the \autoref{thm:apriori}, we conclude that $\set{u_t}_{t\in[0,1]}$ is uniformly bounded. By the homotopy invariance of the topological degree, we know that
 \begin{align*}
     d_{h_+,h_-,c}=d_{h_{+,0},h_{-,0},c_0}=d_{h_{+,1},h_{-,1},c_1}=d_{\Lambda,-\Lambda,0}.
 \end{align*}

 We consider the following functional
 \begin{align*}
     L^{\infty}(V)\ni u\mapsto J_{\Lambda}(u)=\dfrac12\int_{V}\abs{\nabla u}^2\dif\mu-2\Lambda\int_{V}\cosh u\dif\mu.
 \end{align*}
 One can check that the Euler-Lagrangian equations for this functional are
 \begin{align*}
     -\Delta u=2\Lambda\sinh u=\Lambda e^{u}-\Lambda e^{-u}.
 \end{align*}
 In other words, $u$ solves \eqref{eq:sinh-gordon} if and only if $u$ is a critical point of the functional $J_{\Lambda}$.

 For every $u,\xi\in L^{\infty}\left(V\right)$ and $t\in\mathbb{R}$, a direct computation gives
 \begin{align*}
     \left.\dfrac{\dif^2}{\dif t^2}\right\vert_{t=0}J_{\Lambda}(u+t\xi)=&\int_{V}\abs{\nabla\xi}^2\dif\mu-2\Lambda\int_{V}\cosh u\cdot\xi^2\dif\mu.
 \end{align*}
 Since $\cosh u=\frac{e^u+e^{-u}}{2}\geq1$, we obtain
 \begin{align*}
      \left.\dfrac{\dif^2}{\dif t^2}\right\vert_{t=0}J_{\Lambda}(u+t\xi)\leq\Lambda\int_{V}\xi^2\dif\mu-2\Lambda\int_{V}\xi^2\dif\mu=-\Lambda\int_{V}\xi^2\dif\mu.
 \end{align*}
 In particular, $J_{\Lambda}$ is a strictly concave function in the finite dimensional space $L^{\infty}\left(V\right)$. On the other hand,
 \begin{align*}
     J_{\Lambda}(u)\leq& C\left(\max_{V}u-\min_{V}u\right)^2-C^{-1}\Lambda\left(\cosh\max_{V}u+\cosh\min_{V}u\right)\\
     \leq& -C^{-1}\cosh\max_{V}\abs{u}\to-\infty,
 \end{align*}
 as $\max_{V}\abs{u}\to\infty$. 
 Consequently, $J_{\Lambda}$ has exactly one critical point  $u_{\Lambda}$ which is the global maximum point of the functional  $J_{\Lambda}$, i.e., the sinh-Gordon equation \eqref{eq:sinh-gordon} admits a unique solution $u_{\Lambda}$. Notice that
 \begin{align*}
      \left.\dfrac{\dif}{\dif t}\right\vert_{t=0}J_{\Lambda}(u+t\xi)=&\int_{V}\left(-\Delta u-2\Lambda\sinh u\right)\xi\dif\mu=\int_{V}F_{\Lambda,-\Lambda,0}(u)\xi\dif\mu,\\
       \left.\dfrac{\dif^2}{\dif t^2}\right\vert_{t=0}J_{\Lambda}(u+t\xi)=&\int_{V}\left(-\Delta\xi-2\Lambda\cosh u\cdot\xi\right)\xi\dif\mu=\int_{V}\dfrac{\partial F_{\Lambda,-\Lambda,0}(u)}{\partial u}\xi^2\dif\mu.
 \end{align*}
 Since $J_{\Lambda}$ is strictly concave, we know that all of the eigenvalues of $\frac{\partial F_{\Lambda,-\Lambda,0}(u)}{\partial u}$ are negative. Hence, by the definition of the topological degree \eqref{eq:degree-local}, we get
 \begin{align*}
     d_{h_+,h_-,c}=d_{\Lambda,-\Lambda,0}=\mathrm{sgn}\det\left(-\Delta-2\Lambda\cosh u_{\Lambda}\mathrm{Id}\right)=(-1)^{\#V}.
 \end{align*}

\end{proof}

Next, we consider a slightly more general case.

\begin{theorem}\label{thm:general}
Let $G=(V,E)$ be a finite, connected and symmetric graph. If $\max_{V}h_+>0, \min_{V}h_-<0$ and
 \begin{align*}
     V_0\coloneqq\set{x\in V:h_+(x)>0}=\set{x\in V: h_-(x)<0},
 \end{align*}
 then 
    \begin{align*}
        d_{h_+,h_-,c}=(-1)^{\#V_0}.
    \end{align*}
\end{theorem}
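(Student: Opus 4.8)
The plan is to reduce the general case $V_0 = \{h_+ > 0\} = \{h_- < 0\}$ to the special case already established in \autoref{thm:special}, by deforming the prescribed functions so that on $V_0$ they become large positive (resp. large negative) constants, and so that off $V_0$ the nonlinearity is killed and the equation decouples into a trivial linear problem. Concretely, first I would homotope away the ``inessential'' part of $h_\pm$: on $V \setminus V_0$ we have $h_+ \le 0$ and $h_- \ge 0$, so set $h_{+,t} = h_+^+ - (1-t)h_+^-$ and $h_{-,t} = -h_-^- + (1-t)h_-^+$ (using $h_+^+$ supported exactly on $V_0$, and likewise $h_-^-$), together with $c_t = (1-t)c$. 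One checks that throughout this homotopy the sign structure is preserved — $h_{+,t}(x) > 0 \iff x \in V_0 \iff h_{-,t}(x) < 0$ — and that hypotheses (H1)–(H2) of \autoref{thm:apriori} hold uniformly in $t$ (the key point being $h_{\pm,t}^2 \ge \pm K^{-1} h_{\pm,t}$, which follows because $h_{+,t}$ is $\ge 0$ on $V_0$ with values bounded below there, and $\le 0$ elsewhere, analogously for $h_{-,t}$). Hence by \autoref{thm:apriori} and homotopy invariance, $d_{h_+,h_-,c} = d_{h_+^+,\, -h_-^-,\, 0}$, i.e.\ we may assume $h_+ \ge 0$ vanishing off $V_0$, $h_- \le 0$ vanishing off $V_0$, and $c = 0$.

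Next I would handle the decoupling. With $c = 0$ and $h_\pm \equiv 0$ on $V \setminus V_0$, a solution $u$ satisfies $-\Delta u = 0$ at every $x \in V \setminus V_0$ — but this does \emph{not} make $u$ constant there, since $\Delta$ couples $V_0$ and its complement. The right move is instead a second homotopy scaling up the constants on $V_0$: let $\Lambda$ be the largest eigenvalue of $-\Delta$ and set $h_{+,t} = (1-t)h_+^+ + t\Lambda \chi_{V_0}$, $h_{-,t} = (1-t)(-h_-^-) - t\Lambda\chi_{V_0}$, $c_t = 0$. Again (H1)–(H2) persist with uniform constants and the sign/support structure is unchanged, so $d_{h_+,h_-,c} = d_{\Lambda\chi_{V_0},\, -\Lambda\chi_{V_0},\, 0}$. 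Now I would argue, exactly as in the proof of \autoref{thm:special}, via the functional $J(u) = \frac12\int_V |\nabla u|^2\dif\mu - 2\Lambda\int_{V_0}\cosh u\dif\mu$: its second variation is $\int_V|\nabla\xi|^2\dif\mu - 2\Lambda\int_{V_0}\cosh u\cdot\xi^2\dif\mu$, which on the subspace of functions supported on $V_0$ is $\le -\Lambda\int_{V_0}\xi^2\dif\mu < 0$, but on functions supported off $V_0$ equals $\int_V|\nabla\xi|^2\dif\mu \ge 0$ and can vanish. So $J$ is not globally concave, and this is where the argument needs more care than \autoref{thm:special}.

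The main obstacle, therefore, is the degenerate/indefinite direction coming from $V \setminus V_0$. I expect to resolve it by one further homotopy that adds a strictly convex-in-$u$ term off $V_0$ without disturbing the sign structure — for instance replacing $-\Delta$ by $-\Delta + s\,(1 - \chi_{V_0})\,\mathrm{Id}$-type lower-order perturbations is not directly available, so instead I would deform $h_+$ to include a small positive bump $\varepsilon\chi_{V\setminus V_0}$ and simultaneously $h_-$ to $-\varepsilon\chi_{V\setminus V_0}$... but that changes $V_0$. The cleaner route: observe that with $h_\pm = \pm\Lambda\chi_{V_0}$ the equation reads $-\Delta u = 2\Lambda\chi_{V_0}\sinh u$; differentiating $F$ at a solution $u$ gives $\mathrm{d}F(u) = -\Delta - 2\Lambda\chi_{V_0}\cosh u\,\mathrm{Id}$. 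I would show (i) the solution is \emph{unique} — uniqueness follows from the strong maximum principle \autoref{lem:maximum} applied to the difference of two solutions, since $t\mapsto \chi_{V_0}(x)\sinh t$ is nondecreasing and strictly increasing on $V_0$, forcing the difference to vanish on $V_0$ and then, being $\Delta$-harmonic with zero boundary data on the connected complement, to vanish everywhere — and (ii) $\mathrm{d}F(u)$ is nonsingular with a definite sign of its determinant. For (ii), write the operator in block form with respect to $V_0 \sqcup (V\setminus V_0)$; the $V\setminus V_0$ block is $-\Delta$ restricted there (a Dirichlet-type Laplacian on the complement, hence positive definite, contributing sign $(-1)^{\#(V\setminus V_0)}\cdot(-1)^{\#(V\setminus V_0)}$... ), while the Schur complement onto $V_0$ is $-\Delta_{V_0} - 2\Lambda\cosh u\,\mathrm{Id} - (\text{positive semidefinite coupling})$, which is negative definite because $2\Lambda\cosh u \ge 2\Lambda \ge$ the top eigenvalue of the relevant Laplacian block. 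Counting: the Schur complement on $V_0$ contributes $(-1)^{\#V_0}$ and the positive-definite complementary block contributes $+1$, giving $d_{h_+,h_-,c} = \mathrm{sgn}\det\mathrm{d}F(u) = (-1)^{\#V_0}$, as claimed. The one delicate computation I would do carefully is verifying that the $V\setminus V_0$ diagonal block of $-\Delta$ (not the full $\Delta$, just its principal submatrix) is positive definite — this is standard, since it is the generator of a killed random walk on a connected graph that can exit to $V_0$, hence has all eigenvalues positive.
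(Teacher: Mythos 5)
Your reduction is sound and matches the paper's: the two homotopies preserve (H1)--(H2) of \autoref{thm:apriori}, so $d_{h_+,h_-,c}=d_{\Lambda\chi_{V_0},-\Lambda\chi_{V_0},0}$, and your block/Schur-complement computation of $\mathrm{sgn}\det \mathrm{d}F(u)$ is correct (the principal block of $-\Delta$ on $V\setminus V_0$ is positive definite because every component of the complement meets $V_0$, and the Schur complement on $V_0$ is $\preceq(\Lambda-2\Lambda)\mathrm{Id}\prec0$ since compressions of $-\Delta$ have top eigenvalue at most $\Lambda$); this is exactly the ``direct computation'' the paper leaves implicit. The genuine gap is step (i), uniqueness. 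The strong maximum principle \autoref{lem:maximum} does \emph{not} force $u-w$ to vanish on $V_0$: writing $v=u-w$, $-\Delta v=2\Lambda\chi_{V_0}(\sinh u-\sinh w)$, a maximum point $x_1$ with $\Delta v(x_1)<0$ only yields $x_1\in V_0$ and $v(x_1)>0$ (and a minimum point in $V_0$ with $v<0$), which is no contradiction. The sign here is opposite to the one in \autoref{thm:first}: the nonlinearity $2\Lambda\chi_{V_0}\sinh u$ is \emph{increasing} in $u$, which is the wrong monotonicity for a pure maximum-principle uniqueness argument. Indeed uniqueness genuinely fails for small $\Lambda$: on the two-vertex graph with $\mu\equiv\omega\equiv1$ and $V_0=V$, the system $u_1-u_2=2\Lambda\sinh u_1$, $u_2-u_1=2\Lambda\sinh u_2$ has the nontrivial solutions $(a,-a)$ with $a=\Lambda\sinh a$ whenever $\Lambda<1$, alongside $u\equiv0$. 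So any correct proof must use the largeness of $\Lambda$, which your maximum-principle argument never invokes. (You also never address existence, though that is harmless here since $u\equiv0$ solves $-\Delta u=2\Lambda\chi_{V_0}\sinh u$.)

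The fix is available inside your own setup: any solution is harmonic on $V\setminus V_0$, hence equals the harmonic extension $P\phi$ of its restriction $\phi=u\vert_{V_0}$, and $\phi$ solves $-L\phi=2\Lambda\sinh\phi$ with $L\phi=(\Delta P\phi)\vert_{V_0}$, i.e.\ $-L$ is precisely the Schur complement you already use. Since the top eigenvalue $\Lambda_0$ of $-L$ satisfies $\Lambda_0\leq\Lambda$, the reduced functional $\tilde J(\phi)=-\int_{V_0}\bigl(\tfrac12\phi L\phi+2\Lambda\cosh\phi\bigr)\dif\mu$ is strictly concave, so it has exactly one critical point; equivalently, if $u,w$ are two solutions and $\phi=(u-w)\vert_{V_0}$, then
\begin{align*}
2\Lambda\int_{V_0}\phi^2\dif\mu\leq\int_{V_0}2\Lambda(\sinh u-\sinh w)(u-w)\dif\mu=-\int_{V_0}\phi L\phi\dif\mu\leq\Lambda_0\int_{V_0}\phi^2\dif\mu,
\end{align*}
forcing $\phi\equiv0$ and then $u\equiv w$ by harmonicity off $V_0$. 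This is exactly the route the paper takes (it chooses $\Lambda=\Lambda_0$ and argues as in \autoref{thm:special} for the reduced problem); alternatively, since your Schur argument shows $\mathrm{d}F(u)$ is nonsingular at \emph{every} $u$, a Hadamard-type global inversion (properness from \autoref{thm:apriori} plus everywhere-invertible differential) would also give uniqueness. As written, however, the uniqueness step is incorrect and the degree count $d=(-1)^{\#V_0}$ does not follow, since the degree equals $(-1)^{\#V_0}$ times the \emph{number} of solutions.
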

\begin{proof}

We consider the following deformations
\begin{align*}
    h_{+,t}(x)=\begin{cases}
        (1-t)h_+(x)+t\Lambda,&h_+(x)>0,\\
        (1-t)h_+(x),&h_+(x)\leq0,
    \end{cases}\quad h_{-,t}(x)=\begin{cases}
        (1-t)h_-(x)-t\Lambda,&h_-(x)<0,\\
        (1-t)h_-(x),&h_-(x)\geq0,
    \end{cases}\quad c_t=(1-t)c,
\end{align*}
where $t\in[0,1]$ and $\Lambda$ is a positive number to be determined. Assume $u_t$ solves
\begin{align*}
    -\Delta_t u_t=h_{+,t}e^{u_t}+h_{-,t}e^{-u_t}-c_t,\quad t\in[0,1].
\end{align*}
 Notice that for all $t\in[0,1]$
\begin{itemize}
    \item 
    \begin{align*}
        0<\min\set{\max_{V}h_+^+,\max_{V}\abs{h_-^-}, \Lambda}\leq\max_{V}\abs{h_{\pm,t}}\leq\max\set{\max_{V}\abs{h_{\pm}},\Lambda},\quad \abs{c_t}\leq\abs{c},
    \end{align*}
    \item 
    \begin{align*}
        h_{+,t}(x)>0\ \Longrightarrow\quad h_{+}(x)>0\quad \Longrightarrow\quad h_{+,t}(x)\geq\min\set{h_{+}(x),\Lambda}>0,
    \end{align*}
    \item 
    \begin{align*}
        h_{-,t}(x)<0\ \Longrightarrow\quad h_{-}(x)<0\quad \Longrightarrow\quad h_{-,t}(x)\leq\max\set{h_{-}(x),-\Lambda}<0.
    \end{align*}
\end{itemize}
Consequently, according to the \autoref{thm:apriori}, we conclude that $\set{u_t}_{t\in[0,1]}$ is uniformly bounded. By the homotopy invariance of the topological degree, we know that
 \begin{align*}
     d_{h_+,h_-,c}=d_{h_{+,0},h_{-,0},c_0}=d_{h_{+,1},h_{-,1},c_1}=d_{\Lambda\chi_{V_0},-\Lambda\chi_{V_0},0}.
 \end{align*}
 
  It is well known that the following boundary value problem has a unique solution 
\begin{align*}
    \begin{cases}
        \Delta u=0,&\text{in}\ V\setminus V_0,\\
        u=\phi,&\text{in}\ V_0,
    \end{cases}
\end{align*}
for every function $\phi\in L^{\infty}\left(V_0\right)$. We then obtain a linear map $\phi\mapsto P\phi=u$. We define the linear operator $L:L^{\infty}\left(V_0\right)\To L^{\infty}\left(V_0\right)$ as follows
  \begin{align*}
      \phi\mapsto L\phi\coloneqq\left(\Delta (P\phi)\right)\vert_{V_0}
  \end{align*}
  Then $u\in L^{\infty}\left(V\right)$ solves 
  \begin{align}\label{eq:sg-p}
      -\Delta u=2\Lambda\sinh u\cdot\chi_{V_0},\quad \text{in}\ V,
  \end{align}
  if and only if $\phi=u\vert_{V_0}$ solves
\begin{align}\label{eq:sinh-gordon-positive}
     -L\phi=2\Lambda\sinh\phi,\quad \text{in}\ V_0,
\end{align}
if and only if $\phi$ is a critical point of the following functional
\begin{align*}
    \tilde J_{\Lambda}(\phi)=-\int_{V_0}\left(\dfrac12\phi L\phi+2\Lambda\cosh\phi\right)\dif\mu,\quad\forall \phi\in L^{\infty}\left(V_0\right).
\end{align*}
One can check that for every functions $\phi,\eta\in L^{\infty}\left(V_0\right)$, we have
\begin{align*}
    \left.\dfrac{\dif^2}{\dif t^2}\right\vert_{t=0}\tilde J_{\Lambda}(\phi+t\eta)=-\int_{V_0}\left(\eta L\eta+2\Lambda\cosh\phi\cdot\eta^2\right)\dif\mu.
\end{align*}
Let $\Lambda_0$ be the largest eigenvalue of the operator $-L$. One can  choose $\Lambda=\Lambda_0$. A similar argument as in the proof of \autoref{thm:special}, we obtain that the equation \eqref{eq:sinh-gordon-positive} has exactly one solution. Consequently, the sinh-Gordon equation \eqref{eq:sg-p} has exactly one solution. A direct computation then yields
\begin{align*}
    d_{h_+,h_-,c}=(-1)^{\#V_0}.
\end{align*}

\end{proof}

We now turn our attention to the last case, which stands apart from the previous two and presents a more subtle complexity.
That is, we consider the case that $\max_{V}h_+>0, \min_{V}h_-<0$ and 
    \begin{align*}
        \set{x\in V: h_+(x)>0}\neq \set{x\in V: h_-(x)<0}.
    \end{align*}
    We have the following 
\begin{theorem}\label{thm:final}
Let $G=(V,E)$ be a finite, connected and symmetric graph. 
    If $\max_{V}h_+>0, \min_{V}h_-<0$ and 
    \begin{align*}
        V_+\coloneqq\set{x\in V: h_+(x)>0}\neq \set{x\in V: h_-(x)<0}\eqqcolon V_-,
    \end{align*}
    then 
    \begin{align*}
        d_{h_+,h_-,c}=0.
    \end{align*}
\end{theorem}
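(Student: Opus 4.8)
The strategy is to deform the data $(h_+,h_-,c)$ to a model triple carrying no solution, so that homotopy invariance of the degree collapses $d_{h_+,h_-,c}$ to zero. Fix $\Lambda>0$, to be chosen large in terms of the graph alone, and reuse the deformation from the proof of \autoref{thm:general}:
\begin{align*}
    h_{+,t}(x)=\begin{cases}(1-t)h_+(x)+t\Lambda,&x\in V_+,\\ (1-t)h_+(x),&x\notin V_+,\end{cases}\qquad
    h_{-,t}(x)=\begin{cases}(1-t)h_-(x)-t\Lambda,&x\in V_-,\\ (1-t)h_-(x),&x\notin V_-,\end{cases}\qquad c_t=(1-t)c,
\end{align*}
for $t\in[0,1]$. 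The first step is to check that $(H1)$ and $(H2)$ hold along the whole homotopy with one constant $K$: on $V_+$ the number $h_{+,t}(x)$ is a convex combination of the positive numbers $h_+(x)$ and $\Lambda$, hence stays $\ge\min\{\min_{V_+}h_+,\Lambda\}>0$, while off $V_+$ it equals $(1-t)h_+(x)\le0$; $h_{-,t}$ is handled symmetrically, and $|c_t|\le|c|$. Then \autoref{thm:apriori} bounds any family $\{u_t\}_{t\in[0,1]}$ of solutions uniformly, and homotopy invariance gives $d_{h_+,h_-,c}=d_{\Lambda\chi_{V_+},-\Lambda\chi_{V_-},0}$.

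It then suffices to prove that, for $\Lambda$ large, the model equation $-\Delta u=\Lambda\chi_{V_+}e^{u}-\Lambda\chi_{V_-}e^{-u}$ has no solution at all: the map $F_{\Lambda\chi_{V_+},-\Lambda\chi_{V_-},0}$ is then zero-free, so its degree on every ball, and hence $d_{\Lambda\chi_{V_+},-\Lambda\chi_{V_-},0}$, vanishes. Since $u\mapsto-u$ interchanges $V_+$ and $V_-$, I may assume $V_+\setminus V_-\neq\emptyset$. A constant $u\equiv a$ cannot solve the equation, for at a vertex of $V_+\setminus V_-$ it would read $0=\Lambda e^{a}>0$. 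Hence any solution $u$ is non-constant, so by \autoref{lem:maximum} there is $x_1$ with $u(x_1)=\max_V u=:M$ and $-\Delta u(x_1)>0$, and the sign of the right-hand side forces $x_1\in V_+$; applying this to $-u$ yields $x_2$ with $u(x_2)=\min_V u=:m$, $-\Delta u(x_2)<0$ and $x_2\in V_-$.

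The crux is a two-point estimate. Set $s=M-m>0$ and $C=\max_{x\in V}\mu_x^{-1}\sum_{y\sim x}\omega_{xy}$, so that $0<-\Delta u(x_1)\le Cs$ and $-Cs\le-\Delta u(x_2)<0$. At $x_1$: if $x_1\notin V_-$ then $\Lambda e^{M}=-\Delta u(x_1)\le Cs$, while if $x_1\in V_-$ then $-\Delta u(x_1)>0$ already forces $M>0$, whence $\Lambda e^{M}\le-\Delta u(x_1)+\Lambda e^{-M}\le Cs+\Lambda$; in either case $\Lambda e^{M}\le Cs+\Lambda$, so $M\le\ln(1+Cs/\Lambda)\le Cs/\Lambda$. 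The symmetric argument at $x_2$ gives $-m\le Cs/\Lambda$, hence $s=M-m\le 2Cs/\Lambda$; since $s>0$ this forces $\Lambda\le 2C$, so choosing $\Lambda>2C$ at the outset yields a contradiction. Therefore $d_{\Lambda\chi_{V_+},-\Lambda\chi_{V_-},0}=0$, and $d_{h_+,h_-,c}=0$.

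I expect the main obstacle to be precisely this last estimate: all the rest (verifying $(H1)$, $(H2)$ and invoking homotopy invariance) merely repeats the pattern already established in \autoref{sec:degree}, whereas here one must isolate the right model problem, exploit $V_+\neq V_-$ to rule out constant solutions, and carry out the small case distinction (whether $x_1\in V_-$, resp.\ whether $x_2\in V_+$) needed to make the bounds $M,\,-m\le Cs/\Lambda$ uniform, so that a sufficiently large $\Lambda$ produces the contradiction.
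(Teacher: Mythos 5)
Your proposal is correct, but it takes a genuinely different route from the paper. The paper keeps $h_{\pm}$ fixed and deforms only the constant, sending $c$ to a large value $\Lambda$: it proves nonexistence for large $\Lambda$ by playing an upper bound $\max_V u_{\Lambda}\leq C+\ln\Lambda$ (obtained at a maximum point, using the bound $\max_V\abs{u_{\Lambda}}\leq C\Lambda$ from \autoref{thm:apriori} inside the argument) against a lower bound $\max_V u_{\Lambda}\geq C^{-1}\Lambda-C$ obtained at a vertex of $V_-\setminus V_+$, and then runs the homotopy $c_t=(1-t)c+t(\Lambda_0+1)$. You instead reuse the deformation of \autoref{thm:general} to normalize to the model data $\left(\Lambda\chi_{V_+},-\Lambda\chi_{V_-},0\right)$ and show that this model is solution-free once $\Lambda>2C$, by a self-contained two-point argument: constants are excluded precisely because $V_+\neq V_-$, and for non-constant solutions the strong maximum principle (\autoref{lem:maximum}) forces the max point into $V_+$ and the min point into $V_-$, after which the bounds $M,\,-m\leq Cs/\Lambda$ give $s\leq 2Cs/\Lambda$, a contradiction; your case distinctions ($x_1\in V_-$ or not, $x_2\in V_+$ or not) are handled correctly, and the verification of $(H1)$--$(H2)$ along the homotopy is sound since $\min_{V_+}h_+>0$ and $\max_{V_-}h_-<0$ on a finite graph. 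What each approach buys: your nonexistence step is more elementary (it never invokes \autoref{thm:apriori} inside the nonexistence proof, only for the homotopy) and makes the treatment of the third case structurally parallel to \autoref{thm:general}; the paper's route, by contrast, establishes nonexistence of \eqref{eq:sinh-gordon} for the \emph{original} $h_{\pm}$ and all large constants, a quantitative fact that is reused later in \autoref{sec:existence}, which your normalization to indicator-type coefficients does not directly provide.
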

\begin{proof}
Without loss of generality, assume
\begin{align*}
    V_-\setminus V_+\neq\emptyset.
\end{align*}
Let $\Lambda>1$ be a positive constant to be determined. We consider the following equation
\begin{align}\label{eq:sinh-Gordon-Lambda}
    -\Delta u=h_+e^{u}+h_-e^{-u}-\Lambda.
\end{align}
Assume $u_{\Lambda}$ solves \eqref{eq:sinh-Gordon-Lambda}. Applying the \autoref{thm:apriori}, there exists a positive constant $C$ which  depends only on the graph $G=(V,E)$ and the prescribed functions $h_{\pm}$ such that
\begin{align}\label{eq:uniform-Lambda}
    \max_{V}\abs{u_{\Lambda}}\leq C\Lambda.
\end{align}

On the one hand, since the graph is finite, one can choose $x_{\Lambda}\in V$ such that 
\begin{align*}
    u_{\Lambda}(x_{\Lambda})=\max_{V}u_{\Lambda}.
\end{align*}
If $h_+(x_{\Lambda})\leq0$, then applying the maximum principle \autoref{lem:maximum}, we get
\begin{align*}
    0\leq-\Delta u_{\Lambda}(x_{\Lambda})=h_+(x_{\Lambda})e^{u_{\Lambda}(x_{\Lambda})}+h_-(x_{\Lambda})e^{-u_{\Lambda}(x_{\Lambda})}-\Lambda\leq \max_{V}h_-e^{-u_{\Lambda}(x_{\Lambda})}-\Lambda.
\end{align*}
We must have $\max_{V}h_->0$ and conclude that
\begin{align}\label{eq:uniform-uper1}
    \max_{V}u_{\Lambda}=u_{\Lambda}(x_{\Lambda})\leq \ln\max_{V}h_--\ln\Lambda.
\end{align}
If $h_{+}(x_{\Lambda})>0$, then the estimate \eqref{eq:uniform-Lambda} gives
\begin{align*}
    \max_{V}\abs{\Delta u_{\Lambda}}\leq C\left(\max_{V}u_{\Lambda}-\min_{V}u_{\Lambda}\right)\leq C\Lambda
\end{align*}
and hence
\begin{align}\label{eq:uniform-Lambda2}
    h_+(x_{\Lambda})e^{u_{\Lambda}(x_{\Lambda})}+h_-(x_{\Lambda})e^{-u_{\Lambda}(x_{\Lambda})}-\Lambda=-\Delta u_{\Lambda}(x_{\Lambda})\leq C\Lambda.
\end{align}
The above estimate \eqref{eq:uniform-Lambda2} implies
\begin{align}\label{eq:uniform-uper2}
    \max_{V}u_{\Lambda}=u_{\Lambda}\left(x_{\Lambda}\right)\leq C+\ln\Lambda.
\end{align}
Therefore, we obtain a upper bound from \eqref{eq:uniform-uper1} and \eqref{eq:uniform-uper2} that 
 \begin{align}\label{eq:upper1-Lambda}
     \max_{V}u_{\Lambda}\leq C+\ln\Lambda.
 \end{align}

 One the other hand, choose $x_0\in V_-\setminus V_+$, i.e.,
 \begin{align*}
     h_+(x_0)\leq0,\quad h_-(x_0)<0.
 \end{align*}
 By the definition,
 \begin{align*}
     -\Delta u_{\Lambda}(x_0)=\dfrac{1}{\mu_{x_0}}\sum_{y\sim x_0}\omega_{x_0y}\left(u_{\Lambda}(x_0)-u_{\Lambda}(y)\right)\geq C\left(u_{\Lambda}(x_0)-\max_{V}u_{\Lambda}\right).
 \end{align*}
Inserting the above estimate into the equation \eqref{eq:sinh-Gordon-Lambda}, we get
\begin{align*}
    C\left(u_{\Lambda}(x_0)-\max_{V}u_{\Lambda}\right)\leq h_+(x_0)e^{u_{\Lambda}(x_0)}+h_-(x_0)e^{-u_{\Lambda}(x_0)}-\Lambda\leq h_-(x_0)e^{-u_{\Lambda}(x_0)}-\Lambda.
\end{align*}
Hence
\begin{align*}
    \max_{V}u_{\Lambda}\geq u_{\Lambda}(x_0)-\dfrac{h_-(x_0)}{C}e^{-u_{\Lambda}(x_0)}+\dfrac{\Lambda}{C}\geq C^{-1}\Lambda+1+\ln\dfrac{-h_-(x_0)}{C}.
\end{align*}
Therefore, we obtain a lower bound of $\max_{V}u_{\Lambda}$ as follows
\begin{align}\label{eq:lower1-Lambda}
    \max_{V}u_{\Lambda}\geq C^{-1}\Lambda-C.
\end{align}
It follows from \eqref{eq:upper1-Lambda} and \eqref{eq:lower1-Lambda} that
\begin{align*}
    C^{-1}\Lambda-C\leq C+\ln\Lambda
\end{align*}
which implies
\begin{align*}
    \Lambda\leq C.
\end{align*}
In other words, there exist a constant $\Lambda_0$ such that the equation \eqref{eq:sinh-Gordon-Lambda} has no solution for any $\Lambda>\Lambda_0$. In particular, 
\begin{align*}
    d_{h_+,h_-,\Lambda}=0,\quad\forall\Lambda>\Lambda_0.
\end{align*}

Finally, we consider the deformation
\begin{align*}
    c_t=(1-t)c+t(\Lambda_0+1),\quad t\in[0,1].
\end{align*}
Assume $u_t$ solves
\begin{align*}
    -\Delta u_t=h_+e^{u_t}+h_{-}e^{-u_t}-c_t.
\end{align*}
Applying the \autoref{thm:apriori}, we know that $\set{u_t}_{t\in[0,1]}$ is uniformly bounded. In particular, by the homotopy invariance of the topological degree, we have
\begin{align*}
    d_{h_+,h_-,c}=d_{h_+,h_-,c_0}=h_{h_+,h_-,c_1}=d_{h_+,h_-,\Lambda_0+1}=0.
\end{align*}

\end{proof}

Now we can prove the \autoref{thm:main2}. 
\begin{proof}[Proof of the \autoref{thm:main2}]
Combine the \autoref{thm:first}, \autoref{thm:second}, \autoref{thm:second2}, \autoref{thm:special}, \autoref{thm:general} and \autoref{thm:final}, we complete the proof.  

\end{proof}

\section{An existence result}\label{sec:existence}

We first recall with discrete version of sub-super solution principle.

Let $f:V\times\mathbb{R}\To\mathbb{R}$ be a continuous function. We consider the following equation
\begin{align}\label{eq:f}
    -\Delta u=f(\cdot,u).
\end{align}
It is easy to check that $u$ solves the above equation \eqref{eq:f} if and only if $u$ is a critical point of the following functional
\begin{align*}
    J_{f}(u)\coloneqq\int_{V}\left(\dfrac12\abs{\nabla u}^2-F(\cdot,u)\right)\dif\mu,
\end{align*}
where $\frac{\partial F}{\partial u}=f$. We say that $\phi\in L^{\infty}\left(V\right)$ is a subsolution to the equation \eqref{eq:f} if 
\begin{align*}
    -\Delta\phi\leq f(\cdot,\phi),
\end{align*}
and we say that $\psi$ is a super-solution to the equation \eqref{eq:f} if 
\begin{align*}
    -\Delta\psi\geq f(\cdot,\psi).
\end{align*}

We have the following 
\begin{lem}[Sub-super solution principle, cf. \cite{SunWan22brouwer}]\label{lem:sub-super}Assume $\phi$ and $\psi$ are the sub- and super-solutions to the equation \eqref{eq:f} respectively with $\phi\leq\psi$. Than any minimizer of the functional $J_{f}$ in $\set{u\in L^{\infty}\left(V\right): \phi\leq u\leq \psi}$ solves \eqref{eq:f}.
\end{lem}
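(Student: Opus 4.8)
The plan is to prove the sub-super solution principle (\autoref{lem:sub-super}) by a direct variational argument in the finite-dimensional space $L^\infty(V)$, exploiting that the constraint set $\mathcal{C}\coloneqq\set{u\in L^{\infty}(V):\phi\leq u\leq\psi}$ is compact and that $J_f$ is continuous, so a minimizer exists; the real content is showing that this constrained minimizer is in fact an \emph{interior} critical point, hence an honest solution of \eqref{eq:f}. First I would observe that $\mathcal{C}$ is a nonempty closed bounded subset of the finite-dimensional space $\mathbb{R}^{\#V}$ (nonempty because $\phi\leq\psi$), hence compact, and $J_f$ is $C^1$ on it, so a minimizer $u\in\mathcal{C}$ exists. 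The task is then to rule out that $u$ touches the obstacles $\phi$ or $\psi$ in a way that obstructs the Euler--Lagrange equation.

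The key step is a pointwise perturbation argument. Suppose, for contradiction, that there is a vertex $x$ with $-\Delta u(x)\neq f(x,u(x))$; say $-\Delta u(x)>f(x,u(x))$ (the other sign is symmetric). I would perturb $u$ at the single vertex $x$ by decreasing its value: set $u_s = u - s\delta_x$ for small $s>0$, where $\delta_x$ is the indicator of $x$. For this to remain in $\mathcal{C}$ I must check $u(x)>\phi(x)$; this is where the subsolution hypothesis enters. If instead $u(x)=\phi(x)$, then since $\phi$ is a subsolution, $-\Delta\phi(x)\leq f(x,\phi(x))=f(x,u(x))$, while $-\Delta u(x) = -\Delta\phi(x) + \frac{1}{\mu_x}\sum_{y\sim x}\omega_{xy}\big((u(y)-\phi(y))-(u(x)-\phi(x))\big) = -\Delta\phi(x) + \frac{1}{\mu_x}\sum_{y\sim x}\omega_{xy}(u(y)-\phi(y))\geq -\Delta\phi(x)$ because $u\geq\phi$ everywhere and $u(x)-\phi(x)=0$; so $-\Delta u(x)\geq -\Delta\phi(x)$ together with $-\Delta u(x)>f(x,u(x))\geq -\Delta\phi(x)$... wait, this does not immediately give a contradiction with subsolutionality. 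Let me instead argue that the direction of perturbation can always be chosen compatibly: if $-\Delta u(x)>f(x,u(x))$ I want to move $u(x)$ down, which needs $u(x)>\phi(x)$; if that fails, i.e. $u(x)=\phi(x)$, the computation above gives $-\Delta u(x)\geq-\Delta\phi(x)$ but subsolutionality gives $-\Delta\phi(x)\leq f(x,\phi(x))$, consistent, so no contradiction — hence in this case I should argue the minimizer cannot strictly decrease $J_f$ by that move, which is automatic since the move is forbidden. The correct resolution: at a constrained minimizer, for every vertex $x$ the one-sided directional derivatives of $J_f$ must be nonnegative in all feasible directions, i.e. if $u(x)>\phi(x)$ then $\partial_{t}J_f(u-t\delta_x)|_{t=0^+}\geq 0$ giving $\mu_x\big(-\Delta u(x)-f(x,u(x))\big)\leq 0$, and if $u(x)<\psi(x)$ then the $+\delta_x$ direction gives $\mu_x\big(-\Delta u(x)-f(x,u(x))\big)\geq 0$.

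Combining these two inequalities with the obstacle hypotheses finishes the proof: at any $x$ where $u(x)$ is strictly between $\phi(x)$ and $\psi(x)$ both inequalities hold, forcing $-\Delta u(x)=f(x,u(x))$. At a vertex where $u(x)=\psi(x)$, only $-\Delta u(x)-f(x,u(x))\leq 0$ is forced by the minimality (the feasible direction is $-\delta_x$); but separately, since $\psi$ is a supersolution, $-\Delta\psi(x)\geq f(x,\psi(x))$, and since $u\leq\psi$ with equality at $x$, $-\Delta u(x)\leq -\Delta\psi(x)$, hence $-\Delta u(x)\leq -\Delta\psi(x)$ does not by itself give the reverse inequality $\geq f$; so one must compute more carefully — $-\Delta u(x) = -\Delta\psi(x) - \frac{1}{\mu_x}\sum_{y\sim x}\omega_{xy}(\psi(y)-u(y)) \leq -\Delta\psi(x)$, and combined with $-\Delta u(x)\leq f(x,u(x))$ from minimality we need to \emph{also} show $\geq$. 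Here I would invoke that these "bad" vertices form a set whose complement we have already handled, and run a connectedness/maximum-principle argument: if the set $\set{x:u(x)=\psi(x)}$ is nonempty and proper, pick $x$ in it with a neighbor $y$ where $u(y)<\psi(y)$; then $-\Delta u(x) < -\Delta\psi(x)\leq f(x,\psi(x))=f(x,u(x))$ strictly, contradicting nothing yet — so the cleanest route is the symmetric obstacle argument of Sun--Wang, which I would cite: the principal obstacle is precisely making the boundary (obstacle-touching) analysis rigorous, and I would handle it by the above one-sided-derivative inequalities plus the sub/supersolution inequalities, checking that they are jointly inconsistent unless $u$ solves \eqref{eq:f} at every vertex, using finiteness of $V$ so that no limiting/regularity issues arise.

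Remark: a cleaner packaging, which I would actually write up, replaces the case analysis by the single observation that a constrained minimizer $u$ over the order interval $[\phi,\psi]$ satisfies $\langle J_f'(u), v-u\rangle \geq 0$ for all $v\in[\phi,\psi]$ (variational inequality); testing with $v=\max(\phi,\min(u+\epsilon w,\psi))$ for arbitrary $w$ and using that $\phi,\psi$ are sub/supersolutions lets one show $\langle J_f'(u),w\rangle=0$ for all $w$, i.e. $J_f'(u)=0$. The main obstacle throughout is exactly this obstacle-contact bookkeeping; everything else (existence of the minimizer, $C^1$ regularity of $J_f$, the formula $J_f'(u)=\mu\cdot(-\Delta u - f(\cdot,u))$) is routine in the finite-dimensional setting.
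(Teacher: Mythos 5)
The paper itself does not prove this lemma (it is quoted from Sun--Wang), so your proposal has to stand on its own. Your strategy --- take a minimizer $u$ of $J_f$ on the compact order interval $\set{\phi\leq u\leq\psi}$, use one-sided variations $u\pm t\delta_x$ at vertices where the corresponding direction is admissible, and use the sub/super-solution inequalities at vertices where $u$ touches an obstacle --- is the right one and does give a complete proof. The genuine gap is that you state the key comparison inequality at contact vertices with the wrong sign, twice, and as a consequence you never close the argument: you explicitly leave the contact cases unresolved and fall back on ``the symmetric obstacle argument of Sun--Wang, which I would cite'' plus a vague connectedness/maximum-principle idea. Concretely, since $-\Delta u(x)=\frac{1}{\mu_x}\sum_{y\sim x}\omega_{xy}\left(u(x)-u(y)\right)$, at a vertex with $u(x)=\phi(x)$ and $u\geq\phi$ one has $u(x)-u(y)=\phi(x)-u(y)\leq\phi(x)-\phi(y)$, hence $-\Delta u(x)\leq-\Delta\phi(x)\leq f(x,\phi(x))=f(x,u(x))$; your identity for $-\Delta u(x)$ in terms of $-\Delta\phi(x)$ has the wrong sign, which is why you concluded ``$\geq-\Delta\phi(x)$'' and saw no way to conclude. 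Symmetrically, at $u(x)=\psi(x)$ with $u\leq\psi$ one gets $-\Delta u(x)\geq-\Delta\psi(x)\geq f(x,\psi(x))=f(x,u(x))$, not ``$\leq$'' as you wrote (and at one point you also misquote the supersolution inequality as $-\Delta\psi(x)\leq f(x,\psi(x))$).

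With the correct signs the proof closes immediately and needs no contradiction, connectedness, or maximum principle: if $\phi(x)<u(x)<\psi(x)$, both one-sided derivatives vanish and $-\Delta u(x)=f(x,u(x))$; if $u(x)=\phi(x)<\psi(x)$, the admissible variation $u+t\delta_x$, $t>0$, gives $-\Delta u(x)\geq f(x,u(x))$, while the comparison above gives the reverse inequality; the case $\phi(x)<u(x)=\psi(x)$ is symmetric; and if $\phi(x)=u(x)=\psi(x)$ the two comparison inequalities alone force equality, so no admissible variation is needed there. Your one-sided derivative computations (sign of $\mu_x\left(-\Delta u(x)-f(x,u(x))\right)$ in the feasible directions) are correct; it is only the discrete obstacle comparison that is reversed, and that is exactly the ingredient you needed. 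Your closing remark about the variational inequality $\langle J_f'(u),v-u\rangle\geq0$ tested with truncations is a standard continuum-style alternative and could be made rigorous, but as written it is only an assertion (``lets one show''), and the obstacle-contact bookkeeping you yourself identify as the main difficulty is precisely what is left undone. So, as submitted, the proof is incomplete, though the repair is short.
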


As a consequence, we  have the following existence result.
\begin{lem}\label{lem:sub-super-1}
Assume 
\begin{align*}
    \limsup_{t\to-\infty}f(\cdot,t)=f_{\infty}(\cdot).
\end{align*}
If $\bar f_{\infty}>0$ and
\begin{align*}
    \inf_{u\in L^{\infty}\left(V\right)}\max_{V}\left(\Delta u+f(\cdot,u)\right)<0,
\end{align*}
then the equation \eqref{eq:f} has at least one solution. 
\end{lem}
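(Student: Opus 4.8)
The plan is to reduce everything to the sub-super solution principle, \autoref{lem:sub-super}. Concretely, if I can produce a subsolution $\phi$ and a supersolution $\psi$ of \eqref{eq:f} with $\phi\le\psi$ on $V$, then $J_f$ is continuous on the order interval $\set{u\in L^{\infty}(V):\phi\le u\le\psi}=\prod_{x\in V}[\phi(x),\psi(x)]$, which is compact since $L^{\infty}(V)\cong\mathbb{R}^{\#V}$; hence $J_f$ attains its minimum there, and \autoref{lem:sub-super} guarantees that any minimizer solves \eqref{eq:f}. Thus the whole task is to build the two barriers.

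The supersolution comes for free from the second hypothesis. Because $\inf_{u\in L^{\infty}(V)}\max_{V}(\Delta u+f(\cdot,u))<0$, there is $u_0\in L^{\infty}(V)$ with $\Delta u_0(x)+f(x,u_0(x))<0$ for every $x\in V$, i.e. $-\Delta u_0>f(\cdot,u_0)$ pointwise; so $\psi\coloneqq u_0$ is a (strict) supersolution.

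The subsolution is where $\bar f_{\infty}>0$ is used, and this is the delicate part. I would look for $\phi$ of the form $\phi=w-s$ with $s>0$ large, where $w$ solves $-\Delta w=f_{\infty}-\tau$ with $\tau\coloneqq\bar f_{\infty}/\int_{V}1\dif\mu>0$; the right-hand side has zero integral, so such $w$ exists (and if $f_{\infty}\equiv+\infty$ on part of $V$ one first replaces $f_{\infty}$ by $\min\set{f_{\infty},M}$ for $M$ large, still of positive integral). Then $-\Delta\phi=f_{\infty}-\tau$, so $\phi$ is a subsolution precisely when $f(x,w(x)-s)\ge f_{\infty}(x)-\tau$ for all $x\in V$; since $\limsup_{t\to-\infty}f(x,t)=f_{\infty}(x)>f_{\infty}(x)-\tau$, for each fixed $x$ this inequality holds along arbitrarily large $s$. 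The main obstacle is exactly to pin down one value of $s$ that works simultaneously at every vertex with only $\limsup$-control available. This is automatic in the regime relevant to \autoref{thm:main3}: there $h_-\ge0$ and $c<0$, so $\lim_{t\to-\infty}f(x,t)=\lim_{t\to-\infty}(h_+(x)e^{t}+h_-(x)e^{-t}-c)\in\set{-c,+\infty}$ is $\ge -c>0$ at every $x$, whence for $m$ negative enough $f(\cdot,m)\ge0$ on $V$ and the constant $\phi\equiv m$ is already a subsolution.

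To finish, in either case, after enlarging the shift (decreasing $m$, or increasing $s$) we may also arrange $\phi\le u_0=\psi$ on $V$; the order interval is then nonempty and compact, a minimizer of $J_f$ on it exists, and by \autoref{lem:sub-super} it is the desired solution of \eqref{eq:f}. So the only nonroutine point is the common choice of shift in the subsolution construction.
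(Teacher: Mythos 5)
Your construction is structurally the same as the paper's: the supersolution is read off directly from the hypothesis $\inf_{u}\max_{V}\left(\Delta u+f(\cdot,u)\right)<0$ (pick $u_0$ realizing a negative maximum, so $-\Delta u_0>f(\cdot,u_0)$ pointwise), and the subsolution is a large negative shift of the solution $\phi_{\infty}$ of the Poisson equation $-\Delta\phi_{\infty}=f_{\infty}-\bar f_{\infty}$, after which \autoref{lem:sub-super} on the compact order interval finishes the proof. (A notational remark: the paper's $\bar f_{\infty}$ is already the mean value of $f_{\infty}$ over $V$, so $f_{\infty}-\bar f_{\infty}$ has zero integral; your $\tau$ is this same quantity once the bar is read as an average rather than an integral.)

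The one step you explicitly leave open --- a single shift that works at every vertex simultaneously --- is exactly the step the paper dispatches in one sentence: it asserts a sequence $A_n\to-\infty$ with $-\Delta\left(A_n+\phi_{\infty}\right)-f(\cdot,A_n+\phi_{\infty})\leq-\tfrac12\bar f_{\infty}$ on all of $V$, i.e.\ $f\left(x,A_n+\phi_{\infty}(x)\right)\geq f_{\infty}(x)-\tfrac12\bar f_{\infty}$ for every $x\in V$. This is immediate when $f_{\infty}$ is a genuine limit (or a liminf), since then the inequality holds for all sufficiently negative shifts at each vertex and $V$ is finite; that is precisely the situation in the application to \eqref{eq:sinh-gordon} with $h_-\geq0$ and $c<0$, where $f(x,t)\to-c$ or $+\infty$ as $t\to-\infty$, which is your observation. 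Under a bare limsup hypothesis, however, the set of good shifts at each vertex is only unbounded below, and for two vertices these sets can interlace and fail to meet (bump-type nonlinearities peaking along disjoint sequences $t_k\to-\infty$), so the common shift is not automatic; one must either read $f_{\infty}$ as a limit/liminf or supply an extra selection argument. In short, your proposal follows the paper's route and is complete in the regime where the lemma is actually invoked (\autoref{thm:main3}); the obstacle you flagged is a real one for the statement with limsup only, but it is the very point the paper's proof passes over with the asserted choice of $A_n$ rather than a point where your approach diverges from it.
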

\begin{proof} Let $\phi_{\infty}$ be the unique solution to the following equation
\begin{align*}
    -\Delta\phi_{\infty}=f_{\infty}-\bar f_{\infty},\quad\bar\phi_{\infty}=0.
\end{align*}
Here $\bar f=\fint_{V}f\dif\mu$ is the average of the integral of the function $f$ over $V$. 
For every constant $A$, we have
\begin{align*}
    -\Delta\left(A+\phi_{\infty}\right)-f(\cdot,A+\phi_{\infty})=f_{\infty}-\bar f_{\infty}-f(\cdot,A+\phi_{\infty}).
\end{align*}
Since $\bar f_{\infty}>0$, there exists a sequence of numbers $A_n\to-\infty$ such that
\begin{align*}
    -\Delta\left(A_n+\phi_{\infty}\right)-f(\cdot,A_n+\phi_{\infty})\leq-\dfrac12 \bar f_{\infty}<0.
\end{align*}
In other words, $A_n+\phi_{\infty}$ are sub-solutions to the equation \eqref{eq:f}. Applying the \autoref{lem:sub-super}, we conclude that the equation \eqref{eq:f} is solvable if and only if it has a super-solution. 

Since
\begin{align*}
    c_f\coloneqq\inf_{u\in L^{\infty}\left(V\right)}\max_{V}\left(\Delta u+f(\cdot,u)\right)<0,
\end{align*}
we can find a function $\psi$ satisfying
\begin{align*}
    \max_{V}\left(\Delta \psi+f(\cdot,\psi)\right)<\dfrac{c_f}{2}<0.
\end{align*}
Thus, $\psi$ is a super-solution and we complete the proof. 

\end{proof}

As a consequence, we obtain an existence result, i.e., we can give a proof of the \autoref{thm:main3}. More general, we have the following
\begin{theorem}Let $G=(V,E)$ be a finite, connected and symmetric graph. Assume $\max_{V}h_+>0,\ h_-\geq0,\  \max_{V}h_->0$.
    \begin{enumerate}[(1)]
        \item There exists a  constant $\Lambda_0$ such that \eqref{eq:sinh-gordon} has no solution when $c<\Lambda_0$.
        \item If $c\leq0$, then a necessary condition to solve \eqref{eq:sinh-gordon} is $\int_{V}h_+\dif\mu<0$.
        \item If $\int_{V}h_+\dif\mu<0$ and set
        \begin{align*}
            c^*_{h_+,h_-}\coloneqq\inf_{u\in L^{\infty}\left(V\right)}\max_{V}\left(\Delta u+h_+e^{u}+h_-e^{-u}\right),
        \end{align*}
        then $c^*_{h_+,h_-}\in\mathbb{R}$.
        Moreover, if $c^*_{h_+,h_-}<0$, then the sinh-Gordon equation \eqref{eq:sinh-gordon} has no solution if $c<c^*_{h_+,h_-}$, and has at least one solution if $c=c^*_{h_+,h_-}$, and has at least two solutions if $c^*_{h_+,h_-}<c<0$.
    \end{enumerate}
\end{theorem}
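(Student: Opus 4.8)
The plan is to establish the three parts in order, leveraging the a priori estimate \autoref{thm:apriori} and the sub-super solution machinery of \autoref{lem:sub-super} and \autoref{lem:sub-super-1}.

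For part (1), I would argue by contradiction exactly as in the proof of \autoref{thm:final}: suppose \eqref{eq:sinh-gordon} has a solution $u_\Lambda$ for $c=\Lambda$ with $\Lambda$ large. Since $h_-\geq0$, the hypothesis $\max_V h_+>0$ together with \autoref{thm:apriori} (applied with $K$ comparable to $\Lambda$) gives $\max_V|u_\Lambda|\le C\Lambda$, hence $|\Delta u_\Lambda|\le C\Lambda$. Evaluating the equation at a point $x_\Lambda$ where $u_\Lambda$ attains its maximum, and splitting into the cases $h_+(x_\Lambda)>0$ and $h_+(x_\Lambda)\le0$ (using \autoref{lem:maximum} and $\max_V h_->0$ in the latter), one obtains an upper bound $\max_V u_\Lambda\le C+\ln\Lambda$. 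But integrating the equation over $V$, $\int_V(h_+e^{u_\Lambda}+h_-e^{-u_\Lambda})\dif\mu=\Lambda\int_V\dif\mu$, and since $h_-\ge0$ this forces $\int_V h_+e^{u_\Lambda}\dif\mu\ge\Lambda\mu(V)$, so $e^{\max_V u_\Lambda}\ge c^{-1}\Lambda$, i.e. $\max_V u_\Lambda\ge\ln\Lambda-C$; combined with the upper bound this is consistent, so instead I would extract a lower bound from a vertex where $h_+$ vanishes or is negative — but here $h_-\ge0$, so such a vertex gives $-\Delta u_\Lambda(x_0)=h_+(x_0)e^{u_\Lambda(x_0)}+h_-(x_0)e^{-u_\Lambda(x_0)}-\Lambda$, and the left side is bounded below by $-C(u_\Lambda(x_0)-\max_V u_\Lambda)$; since $h_+(x_0)\le0$ this yields $\max_V u_\Lambda\ge C^{-1}\Lambda-C$, contradicting $\max_V u_\Lambda\le C+\ln\Lambda$. (If no such $x_0$ exists then $h_+>0$ everywhere, $h_-\ge0$, and $-\Delta u=h_+e^u+h_-e^{-u}-\Lambda$ integrated gives $\Lambda\mu(V)=\int_V(h_+e^u+h_-e^{-u})>0$, which is possible, so one instead uses the maximum-principle bound at $x_\Lambda$ versus the integral lower bound — the two are again compatible only for $\Lambda\le C$.)

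For part (2), integrate \eqref{eq:sinh-gordon} over $V$ to get $\int_V h_+e^u\dif\mu+\int_V h_-e^{-u}\dif\mu=c\,\mu(V)$. If $c\le0$ then since $h_-\ge0$ the second integral is nonnegative, so $\int_V h_+e^u\dif\mu\le0$; as $e^u>0$ this forces $h_+$ to be negative somewhere, and more precisely, writing $m=\min_V u$, $M=\max_V u$ we cannot have $h_+\ge0$ everywhere (else the integral would be $>0$ using $\max_V h_+>0$), so $\int_V h_+\dif\mu$ need not be directly negative from this alone — but testing with the constant approximation and using the elliptic estimate \autoref{lem:elliptic} to control the oscillation of $u$, one shows $\int_V h_+\dif\mu<0$: indeed if $\int_V h_+\dif\mu\ge0$ then replacing $u$ by $u+t$ and letting $t\to-\infty$ in $J_{h_+,h_-,c}$ (or directly: $0\ge\int_V h_+e^u\ge e^m\int_{\{h_+>0\}}h_+ + e^M\int_{\{h_+<0\}}h_+$) leads, via Harnack/elliptic control relating $M$ and $m$, to a contradiction with $\max_V h_+>0$; I expect this is the Kazdan--Warner-type argument of \cite{GriLinYan16kazdan} adapted verbatim.

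For part (3), first note $c^*_{h_+,h_-}\le0$ always (take $u$ to be a large negative constant: $\Delta u+h_+e^u+h_-e^{-u}\to0^-$ pointwise is not quite right since $h_-e^{-u}\to+\infty$ where $h_->0$; instead take $u$ a large \emph{positive} constant, then $\Delta u=0$ and $h_+e^u+h_-e^{-u}\to$ the sign of $h_+$ scaled up — this is $>0$ somewhere; so rather use part (1) to see the infimum is $\ge\Lambda_0>-\infty$, and a direct competitor to see it is $<+\infty$, giving $c^*_{h_+,h_-}\in\mathbb R$). The three solvability claims then follow from a by-now-standard scheme: \textbf{no solution for $c<c^*_{h_+,h_-}$} is immediate from the definition of $c^*_{h_+,h_-}$ since any solution $u$ has $\max_V(\Delta u+h_+e^u+h_-e^{-u})=c\ge c^*_{h_+,h_-}$. \textbf{At least one solution for $c=c^*_{h_+,h_-}<0$}: take $c_k\downarrow c^*_{h_+,h_-}$ with $c_k<0$, solve \eqref{eq:sinh-gordon} with $c=c_k$ using \autoref{lem:sub-super-1} applied to $f(\cdot,u)=h_+e^u+h_-e^{-u}-c_k$ (here $f_\infty=\limsup_{t\to-\infty}f(\cdot,t)$ equals $h_-\cdot(+\infty)$ where $h_->0$ — so one must be careful: $\limsup_{t\to-\infty}(h_+e^t+h_-e^{-t}-c_k)=+\infty$ wherever $h_->0$, so the hypothesis ``$\bar f_\infty>0$'' of \autoref{lem:sub-super-1} is satisfied in the strong sense, and the condition $\inf_u\max_V(\Delta u+f)<0$ becomes $c^*_{h_+,h_-}-c_k<0$, true); this produces solutions $u_k$, which are uniformly bounded by \autoref{thm:apriori}, so a subsequence converges to a solution at $c=c^*_{h_+,h_-}$. \textbf{At least two solutions for $c^*_{h_+,h_-}<c<0$}: for such $c$, \autoref{lem:sub-super-1} gives one solution $u_c$ which is a local minimizer of $J_{h_+,h_-,c}$ on the order interval between the sub/super-solutions; one then shows $u_c$ is a strict local minimum in $L^\infty(V)$ and, using that $J_{h_+,h_-,c}(u+t)\to-\infty$ as $t\to+\infty$ (since $c<0$ makes the $cu$ term dominate — check: $J=\int(\tfrac12|\nabla u|^2-h_+e^u+h_-e^{-u}+cu)$, and with $u$ a large positive constant, $-h_+e^u$ has no sign... so instead use a mountain-pass geometry in the direction where $h_+$ is positive, where $-h_+e^u\to-\infty$), obtains a second critical point via the mountain pass theorem in the finite-dimensional space $L^\infty(V)$, with the Palais--Smale condition automatic from the a priori bound \autoref{thm:apriori}.

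\textbf{Main obstacle.} The delicate point is part (3), specifically verifying the mountain-pass geometry for the second solution when $c^*_{h_+,h_-}<c<0$: one must exhibit a path from the local-minimizer solution $u_c$ to a point where $J_{h_+,h_-,c}$ is strictly lower, and confirm that $u_c$ is a \emph{strict} local minimum (not merely a constrained minimizer on the order interval) so that the mountain-pass value exceeds $J_{h_+,h_-,c}(u_c)$; this requires knowing the second variation $\mathrm{d}^2J_{h_+,h_-,c}(u_c)$ is positive semidefinite and handling the degenerate directions, or else a topological argument comparing the local degree near $u_c$ (which is $+1$ for a strict minimum) with the total degree $d_{h_+,h_-,c}$ computed in \autoref{thm:main2} — under the present hypotheses $h_-\ge0$, so $\{h_-<0\}=\emptyset\neq\{h_+>0\}$, whence $d_{h_+,h_-,c}=0$ by \autoref{thm:main2}; since the local degree at $u_c$ is $+1$, the excision/additivity property of the Brouwer degree forces a second solution with local degree summing to $-1$, giving the second solution directly without mountain-pass. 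This degree-counting route is cleaner and is the one I would actually carry out: $0=d_{h_+,h_-,c}=\mathrm{ind}(u_c)+\sum_{\text{other solutions}}\mathrm{ind}=1+\cdots$, so there is at least one further solution.
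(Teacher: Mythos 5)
Your part (1) does not prove the statement and contains an invalid step. The claim is non-existence for $c<\Lambda_0$, i.e.\ for $c$ very \emph{negative}, but you run the blow-up argument of \autoref{thm:final} with $c=\Lambda\to+\infty$. The lower bound $\max_V u_\Lambda\ge C^{-1}\Lambda-C$ that you extract at a vertex $x_0$ with $h_+(x_0)\le 0$ is precisely the step of \autoref{thm:final} that needs $h_-(x_0)<0$: there one bounds $u_\Lambda(x_0)-\frac{h_-(x_0)}{C}e^{-u_\Lambda(x_0)}$ from below by $1+\ln\frac{-h_-(x_0)}{C}$, and this fails when $h_-(x_0)\ge0$ because $s\mapsto s-\frac{h_-(x_0)}{C}e^{-s}$ is then unbounded below. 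Indeed the conclusion you are heading for is false under the present hypotheses: on two vertices with $\mu\equiv\omega\equiv1$, $h_+=(1,0)$, $h_-=(0,1)$, the system $x-y=e^x-c$, $y-x=e^{-y}-c$ is solvable for every $c>1$ (it reduces to $s=\ln\left(c^2-s^2\right)$, which has a root in $(0,c)$). The paper's proof of (1) is the mirror image of \autoref{thm:final} (replace $u$ by $-u$; equivalently work at a vertex $x_0$ with $h_+(x_0)>0$, which exists since $\set{h_-<0}=\emptyset$): for $c$ very negative one gets $\min_V u\le -C^{-1}\abs{c}+C$ from the equation at $x_0$, against $\min_V u\ge -C-\ln\abs{c}$, which is impossible for $c$ sufficiently negative. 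Note also that your argument for $c^*_{h_+,h_-}>-\infty$ in part (3) leans on part (1), so this gap propagates.

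Part (2) is likewise not proved: integrating the equation only gives $\int_V h_+e^u\dif\mu\le0$, and your proposed upgrade ($e^m\int_{\set{h_+>0}}h_+\dif\mu+e^M\int_{\set{h_+<0}}h_+\dif\mu\le0$ plus ``Harnack/elliptic control of $M-m$'') cannot work, since the oscillation of $u$ is not bounded independently of the solution and that inequality only shows $h_+$ changes sign. The missing idea is the Kazdan--Warner multiplier $e^{-u}$: since $\Gamma\left(e^{-u},u\right)\le0$, one has $0\ge-\int_Ve^{-u}\Delta u\dif\mu=\int_Vh_+\dif\mu+\int_Vh_-e^{-2u}\dif\mu-c\int_Ve^{-u}\dif\mu$, which with $h_-\ge0$, $c\le0$ (and strictness when $c=0$, where $u$ cannot be constant) yields $\int_Vh_+\dif\mu<0$. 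Your part (3) does follow the paper's route (constant sub-solutions for $c<0$, a super-solution from $c>c^*_{h_+,h_-}$, the a priori estimate \autoref{thm:apriori} to pass $c\downarrow c^*_{h_+,h_-}$, and for multiplicity a local minimizer of index $+1$ against total degree $0$, which is indeed $0$ here by \autoref{thm:main2}/\autoref{thm:second} since $h_-\ge0$ and $h_-\not\equiv0$), but the step you flag as the ``main obstacle'' is exactly what must be supplied and is left unproven: the paper shows $A<u_{c,1}<u_{c^*_{h_+,h_-}}$ by the strong maximum principle, so the constrained minimizer is an unconstrained local minimizer, and then excludes degenerate directions by a third/fourth-variation computation using $h_-\ge0$, $c<0$, making $u_{c,1}$ a strict local minimizer so the degree count is legitimate. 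Finally, \autoref{lem:sub-super-1} is not literally applicable when $f_\infty\equiv+\infty$ on $\set{h_->0}$; the clean fix, used in the paper, is that any constant $A<\ln\frac{-c}{\max_V\abs{h_+}}$ is already a sub-solution when $c<0$.
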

\begin{proof}
Firstly, since
 \begin{align*}
     \set{h_+>0}\neq\emptyset,\quad \set{h_-<0}=\emptyset,
 \end{align*}
 the argument in the proof of the \autoref{thm:final} implies that there exists a  constant $\Lambda_0$ such that the equation \eqref{eq:sinh-gordon} has no solution when $c<\Lambda_0$.

 Secondly, we assume $c\leq 0$. If $c=0$ and $u$ solves \eqref{eq:sinh-gordon}, then $u$ can not be a constant function and 
\begin{align*}
    0>-\int_{V}e^{-u}\Delta u\dif\mu=\int_{V}h_+\dif\mu+\int_{V}h_-e^{-2u}\dif\mu\geq\int_{V}h_+\dif\mu.
\end{align*}
Thus $\int_{V}h_+\dif\mu<0$. If $c<0$, then a similar argument implies that $\bar h_+<0$.

Thirdly, if $c<0$, then for each constant $A<\ln\frac{-c}{\max_{V}\abs{h_+}}$
\begin{align*}
    -\Delta A-h_+e^{A}-h_{-}e^{-A}+c\leq\max_{V}\abs{h_+}e^{A}+c<0.
\end{align*}
That is, the constant function $A$ is a sub-solution whenever $A<\ln\frac{-c}{\max_{V}\abs{h_+}}$. Notice that
\begin{align*}
    \inf_{u\in L^{\infty}\left(V\right)}\max_{V}\left(\Delta u+h_+e^{u}+h_-e^{-u}\right)\geq\inf_{u\in L^{\infty}\left(V\right)}\max_{V}\left(\Delta u+h_+e^{u}\right)\eqqcolon c_{h_+}.
\end{align*}
If $h_+$ changes its sign and $\bar h_+<0$, then we claim that 
\begin{align*}
    \inf_{u\in L^{\infty}\left(V\right)}\max_{V}\left(\Delta u+h_+e^{u}\right)\in(-\infty,0).
\end{align*}
To see this, on the one hand, let $\phi$ be the unique solution to 
\begin{align*}
    -\Delta\phi=h_+-\bar h_+,\quad\min_{V}\phi=0.
\end{align*}
Then 
\begin{align*}
    0\leq\phi\leq C\left(\max_{V}h_+-\bar h_+\right).
\end{align*}
For every constants $a>0$ and $b=\ln a$, we have
\begin{align*}
    \Delta\left(a\phi+b\right)+h_+e^{a\phi+b}=&a\left(\bar h_++h_+\left(e^{a\phi}-1\right)\right)\\
    \leq&a\left(\bar h_++\max_{V}h_+\left(e^{a C\left(\max_{V}h_+-\bar h_+\right)}-1\right)\right)\\
    =&\dfrac{1}{C\left(1-\bar h_+/\max_{V}h_+\right)}\left[e^{a C\left(\max_{V}h_+-\bar h_+\right)}-\left(1-\dfrac{\bar h_+}{\max_{V}h_+}\right)\right]\cdot aC\left(\max_{V}h_+-\min_{V}h_+\right).
\end{align*}
Hence
\begin{align*}
    c_{h_+}\leq&\inf_{a>0}\max_{V}\left(\Delta\left(a\phi+b\right)+h_+e^{a\phi+b}\right)\\
    \leq&\dfrac{1}{C\left(1-\bar h_+/\max_{V}h_+\right)}\inf_{t>0}\left[e^{t}-\left(1-\dfrac{\bar h_+}{\max_{V}h_+}\right)\right]t\\
    <&-\dfrac{1}{2C}\dfrac{\ln^2\left(1-\bar h_+/\max_{V}h_+\right)}{2+\ln^2\left(1-\bar h_+/\max_{V}h_+\right)}<0.
\end{align*}
On the other hand, if $c_{h_+}<c<0$, then according to the \autoref{lem:sub-super-1}, we conclude that there exists a solution $u_c$ to the equation
\begin{align*}
    -\Delta u_c=h_+e^{u_c}-c.
\end{align*}
Thus
\begin{align*}
    \Delta e^{-u_c}\geq-e^{-u_c}\Delta u_c\geq h_+-ce^{-u_c}.
\end{align*}
Applying the maximum principle \autoref{lem:maximum}, we obtain
\begin{align*}
    e^{-u_c}\leq\left(\Delta+c\right)^{-1}h_+.
\end{align*}
If $c_{h_+}=-\infty$, then we obtain by letting $c\to-\infty$ 
\begin{align*}
    \max_{V}h_-\leq0,
\end{align*}
which is a contradiction. 

Finally, we assume $c<0,  c_{h_+,h_-}^*<0$. On the one hand, if  the equation \eqref{eq:sinh-gordon} is solvable, then by the definition, we must have
\begin{align*}
    c^*_{h_+,h_-}\leq c.
\end{align*}
On the other hand, according to the \autoref{lem:sub-super-1},  for every $c\in\left(c^*_{h_+,h_-},0\right)$, the equation 
\begin{align*}
    -\Delta u=h_+e^{u}+h_-e^{-u}-c
\end{align*}
has at least one solution $u_{c}$. Letting $c\to c^*_{h_+,h_-}$, according to the \autoref{thm:apriori}, we obtain a solution $u_{c^*_{h_+,h_-}}$ to the equation
\begin{align*}
     -\Delta u=h_+e^{u}+h_-e^{-u}-c^*_{h_+,h_-}.
\end{align*}
 Moreover, if $c^*_{h_+,h_-}<c<0$, we claim that the equation \eqref{eq:sinh-gordon} has at least two solutions. We can prove this claim as follows.

Let $A<\min_{V}u_{c^*_{h_+,h_-}}$ be a subsolution to the equation \eqref{eq:sinh-gordon}. Notice that $u_{c^*_{h_+,h_-}}$ is a super-solution to the equation \eqref{eq:sinh-gordon}. Applying the sub-super solution principle \autoref{lem:sub-super}, we obtain a solution $u_{c,1}$ which minimizes the functional
\begin{align*}
   \set{u: A\leq u\leq u_{c^*_{h_+,h_-}}} \ni u\to J(u)=\int_{V}\left(\dfrac12\abs{\nabla u}^2-h_+e^{u}+h_-e^{-u}+cu\right)\dif\mu.
\end{align*}
Applying the maximum principle \autoref{lem:maximum}, we know that $A<u_{c,1}<u_{c_{h_+,h_-}^*}$. Moreover, for each $\eta\in L^{\infty}\left(V\right)$,
\begin{align*}
    \left.\dfrac{\dif^2}{\dif t^2}\right\vert_{t=0}J(u_{c,1}+t\eta)\geq0.
\end{align*}
We claim that $u_{c,1}$ is a local strict minimizer. For otherwise,  there exists a nonzero function $\xi$ such that
\begin{align*}
    -\Delta\xi=\left(h_+e^{u_{c,1}}-h_-e^{-u_{c,1}}\right)\xi,
\end{align*}
and
 \begin{align*}
     \left.\dfrac{\dif^3}{\dif t^3}\right\vert_{t=0}J(u_{c,1}+t\xi)=\left.\dfrac{\dif^4}{\dif t^4}\right\vert_{t=0}J(u_{c,1}+t\xi)=0.
 \end{align*}
 We conclude that $\xi$ is not a constant function since $h_-\geq0$ and $c<0$. Moreover,
 \begin{align*}
     0=\left.\dfrac{\dif^4}{\dif t^4}\right\vert_{t=0}J(u_{c,1}+t\xi)=\int_{V}\left(-h_+e^{u_{c,1}}+h_-e^{-u_{c,1}}\right)\xi^4\dif\mu=\int_{V}\Delta\xi\cdot \xi^3\dif\mu<0,
 \end{align*}
 which is a contradiction. Therefore, $u_{c,1}$ is a local strict minimizer. According to the \autoref{thm:final}, we know that the topological degree vanishes. As a consequence, we know that there exists another solution to the equation \eqref{eq:sinh-gordon}.

\end{proof}

\appendix
\section{The topological degree for the Kazdan-Warner equation}\label{sec:KW}
In this appendix, we consider the  Kazdan-Warner equation \eqref{eq:KW}.

We have the following
\begin{theorem}[Sun-Wang \cite{SunWan22brouwer}]Every solution to the Kazdan-Warner equation with nonzero prescribed function on a finite, connected and symmetric graph is uniform bounded. 
\end{theorem}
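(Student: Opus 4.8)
The plan is to argue by contradiction along the lines of the proof of \autoref{thm:main1}, with one extra case because \eqref{eq:KW} --- unlike the sinh-Gordon equation --- is not symmetric under $u\mapsto-u$. Suppose $\set{u_n}$ is a sequence of solutions of $-\Delta u_n=he^{u_n}-c$ on a finite, connected and symmetric graph with $h\neq0$ and $\max_V\abs{u_n}\to\infty$. Looking at a vertex realizing $\max_V\abs{u_n}$ and passing to a subsequence, either $\max_Vu_n=\max_V\abs{u_n}\to+\infty$ (Case~1) or $-\min_Vu_n=\max_V\abs{u_n}\to+\infty$ (Case~2). In both cases the first step is to prove a uniform oscillation bound $\max_Vu_n-\min_Vu_n\le C$ by propagating the Hanack estimates of \autoref{lem:L} along a shortest path joining a maximum vertex to a minimum vertex; once this holds, $\abs{\Delta u_n}\le C$ and the contradiction is extracted from the equation.

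For Case~1 one has $\max_Vu_n+\min_Vu_n\ge0$, so with $L_n\coloneqq\max_Vu_n-u_n$ estimates \eqref{eq:lem-L1} and \eqref{eq:lem-L2} give $-\Delta u_n\le C(u_n^++L_n)$ and $L_n(y)\le C\bigl((-\Delta u_n(x))^++L_n(x)\bigr)$ for $y\sim x$. Plugging the first into \eqref{eq:KW} yields $he^{u_n}\le C(u_n^++L_n+1)$; hence wherever $L_n(x)$ is uniformly bounded one has $u_n(x)\to+\infty$, so the exponential on the left overtakes the linear right-hand side and forces $h(x)\le0$, whence $-\Delta u_n(x)=h(x)e^{u_n(x)}-c\le C$ and, by \eqref{eq:lem-L2}, $L_n(y)\le C$ for all $y\sim x$. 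Starting from the maximum vertex, where $L_n=0$, and iterating at most $\#V-1$ times, we obtain $L_n\le C$ on all of $V$, i.e. the oscillation bound. Then $u_n\to+\infty$ uniformly and $\abs{\Delta u_n}\le C$, so $\abs{h}=e^{-u_n}\abs{c-\Delta u_n}\le Ce^{-u_n}\to0$, forcing $h\equiv0$ --- a contradiction.

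For Case~2 I would pass to $w_n\coloneqq-u_n$, which solves $-\Delta w_n=-he^{-w_n}+c$ with $\max_Vw_n=\max_V\abs{w_n}\to+\infty$ and $\max_Vw_n+\min_Vw_n\ge0$. Here the oscillation estimate is even easier: wherever $L_n\coloneqq\max_Vw_n-w_n$ is bounded one has $w_n\to+\infty$, so $e^{-w_n}\to0$ and the equation itself gives $\abs{\Delta w_n}\le\abs{c}+o(1)\le C$ directly --- no sign condition on $h$ is needed --- and \eqref{eq:lem-L2} propagates this exactly as before. Thus $u_n\to-\infty$ uniformly and $\abs{\Delta u_n}\le C$; integrating \eqref{eq:KW} over $V$ and using $\int_V\Delta u_n\dif\mu=0$ gives $c\int_V1\dif\mu=\int_Vhe^{u_n}\dif\mu\to0$, so $c=0$. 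With $c=0$, put $m_n\coloneqq\min_Vu_n\to-\infty$ and $v_n\coloneqq u_n-m_n\in[0,C]$; then $-\Delta v_n=e^{m_n}he^{v_n}\to0$ uniformly, so along a subsequence $v_n\to v$ with $-\Delta v=0$, hence (by \autoref{lem:maximum}) $v$ is constant, and since $\min_Vv=0$ we get $v\equiv0$, i.e. $v_n\to0$ uniformly. Dividing the identity $\int_Vhe^{u_n}\dif\mu=0$ by $e^{m_n}$ and letting $n\to\infty$ gives $\int_Vh\dif\mu=0$. On the other hand, multiplying $-\Delta u_n=he^{u_n}$ by $e^{-u_n}$, summing, and applying the Green formula gives $\int_Vh\dif\mu=\int_V\Gamma\bigl(u_n,e^{-u_n}\bigr)\dif\mu\le0$ with equality only when $u_n$ is constant --- impossible, since then $h\equiv0$ --- so $\int_Vh\dif\mu<0$, contradicting $\int_Vh\dif\mu=0$.

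The only genuinely new difficulty compared with \autoref{thm:main1} is this last subcase ($c=0$, $\min_Vu_n\to-\infty$): the oscillation-propagation mechanism still applies, but the familiar ``$h\to0$'' conclusion is unavailable once $u_n\to-\infty$, so one has to extract the rescaled limit $v_n\to0$ to pin down $\int_Vh\dif\mu=0$ and then contradict the structural sign constraint $\int_Vh\dif\mu<0$ that any solution of $-\Delta u=he^u$ (with $h\neq0$) must satisfy. Everything else is a routine specialization to $h_-\equiv0$ of the argument already carried out in \autoref{sec:estimate}.
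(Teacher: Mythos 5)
Your proof is correct, and it is worth recording how it relates to what the paper actually does: for this statement the paper offers no proof of its own (it is quoted from Sun--Wang), and the nearest in-paper argument is the proof of \autoref{thm:apriori-appendix}, a variable-coefficient version established under the additional hypotheses $A)$--$C)$. Your Case~1 (blow-up to $+\infty$) is essentially the specialization to $h_-\equiv0$ of the Harnack-propagation proof of \autoref{thm:main1}: bounded $L_n$ forces $u_n\to+\infty$, hence $h\le0$ at such vertices, hence $-\Delta u_n\le C$ there, and \eqref{eq:lem-L2} propagates the bound along a path from the maximum vertex; the appendix proof reaches the same point differently, via Kato's inequality \autoref{lem:kato} and the refined elliptic estimate \autoref{lem:elliptic} plus a case split on the sign of $\max_V h_n$, but this difference is cosmetic. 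The genuine divergence is your Case~2 (blow-down to $-\infty$): in \autoref{thm:apriori-appendix} that scenario is excluded by invoking the assumed condition $C)$ (either $\abs{c}\ge K^{-1}$, or $c\ge0$ and $\bar h\le -K^{-1}$), which is simply not available for the bare statement with fixed $h\neq0$ and $c$. You replace it by deriving, from the oscillation bound, first $c=0$, then $\int_V h\dif\mu=0$ via the rescaled limit $v_n=u_n-\min_V u_n\to0$, and finally contradicting the strict necessary condition $\int_V h\dif\mu<0$, obtained by testing $-\Delta u_n=he^{u_n}$ against $e^{-u_n}$ and using the monotonicity of $t\mapsto e^{-t}$ in the Green formula (the same Kazdan--Warner-type sign constraint the paper exploits in \autoref{sec:existence}). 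Both routes are sound; yours buys a self-contained proof of the fixed-coefficient statement without extra hypotheses, while the paper's appendix version is tailored to the variable-coefficient bounds it needs along homotopies and therefore assumes them.
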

In fact, we can prove the following 
\begin{theorem}\label{thm:apriori-appendix}Let $G=(V,E)$ be a finite, connected and symmetric graph.
    Assume there exists a positive constant $K$ satisfying the following conditions:
    \begin{enumerate}[A)]
        \item $K^{-1}\leq\max_{V}\abs{h}\leq K,\quad \abs{c}\leq K$, and
        \item $\left(\max_{V}h\right)^2-K^{-1}\max_{V}h\geq0$, and
        \item either $\abs{c}\geq K^{-1}$ or $c\geq0$ and $\bar h\leq -K^{-1}$.
    \end{enumerate}
    There exists a positive constant $C$ depending only on $K$ and  the graph such that every solution $u$ to the Kazdan-Warner equation \eqref{eq:KW} satisfies
    \begin{align*}
        \max_{V}\abs{u}\leq C.
    \end{align*}
\end{theorem}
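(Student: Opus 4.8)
The plan is to mirror the structure of the proof of \autoref{thm:apriori}, extracting the extra leverage that the single prescribed function $h$ provides. Assume $u$ solves \eqref{eq:KW}, and without loss of generality assume $\max_{V}u+\min_{V}u\geq0$, so that the Hanack estimates of \autoref{lem:L} apply to $L=\max_{V}u-u$. First I would record the two consequences $-\Delta u\leq C(u^{+}+L)$ and $L(y)\leq C((-\Delta u)^{+}(x)+L(x))$ for $x\sim y$, exactly as in \autoref{thm:apriori}, where here $C$ depends only on the graph (assumption A) controls the coefficients). Inserting the first into \eqref{eq:KW} gives the pointwise bound $he^{u}\leq C(u^{+}+L+K)$, and assumption B) (together with A)) lets me run the same quadratic-in-$e^{u/2}$ trick used to derive \eqref{eq:L4-uniform}, yielding $h(x)e^{u(x)}\leq C(L(x)+K)$ at every vertex. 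Feeding this back into \eqref{eq:KW} produces the key implication $u(x)\geq 0\Rightarrow L(y)\leq C_{0}(L(x)+K)$ for all $x\sim y$, with $C_{0}$ depending only on the graph.

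From here the propagation-along-a-shortest-path argument of \autoref{thm:apriori} gives, under the provisional assumption $\max_{V}|u|=\max_{V}u>(C_{0}+\dots+C_{0}^{\#V-1})K$, both the oscillation bound $\max_{V}u-\min_{V}u\leq(C_{0}+\dots+C_{0}^{\#V-1})K$ and $\min_{V}u\geq0$; consequently $|\Delta u|\leq CK$ on all of $V$. The remaining task is to bound $\min_{V}u$ (equivalently $\max_{V}u$) from above, and this is where assumption C) replaces the role played by $h_{-}\not\equiv 0$ in the sinh-Gordon case. If $|c|\geq K^{-1}$, then from $he^{u}=c-\Delta u$ and $|\Delta u|\leq CK$ one gets $|c|\leq(\max_{V}|h|)e^{\max_{V}u}\cdot$(harmless factors)$\dots$ — more carefully, evaluating at a vertex and using $\min_{V}u\geq 0$ gives $K^{-1}\leq|c|=|h(x)e^{u(x)}-\Delta u(x)|$, which only bounds things once combined with the lower bound $he^{u}\geq -CK$; the clean route is: at the vertex realizing $\min_{V}u$, if $\min_{V}u$ were huge then $he^{u}$ would be forced huge in absolute value unless $h$ vanishes there, contradicting $|c-\Delta u|\leq CK$ unless $h(x)e^{\min_V u}$ is comparably huge, i.e.\ $\max_{V}|h|e^{\min_{V}u}\geq|c|-CK\geq K^{-1}-CK$, which is the wrong direction — so instead I bound $\max_{V}u$ directly: from $-\Delta u=he^{u}-c$ at a maximum point $x_{1}$ with $\Delta u(x_{1})\leq 0$ we get $h(x_{1})e^{\max_{V}u}\geq c$, and if $c>0$ this forces $h(x_{1})>0$ and $e^{\max_{V}u}\leq h(x_{1})/c\cdot(\text{lower order})$ once we also use $-\Delta u(x_1)\le C(\max_V u-\min_V u)\le CK$, giving $h(x_{1})e^{\max_{V}u}\le c+CK$, hence $\max_{V}u\le C$; if $c<0$ with $|c|\ge K^{-1}$ then testing \eqref{eq:KW} against $e^{-u}$ gives $\int_{V}h\dif\mu=-c\int_{V}e^{-u}\dif\mu-\int_{V}e^{-u}\Delta u\dif\mu$, and since the gradient term is nonnegative after integration by parts this forces $\int_V h\dif\mu>0$, so $\max_V h>0$, and the maximum-point inequality $h(x_1)e^{\max_V u}\ge c-CK$ together with $h(x_1)\le K$ is vacuous — here one instead uses the $c>0$-type bound after the sign analysis, or invokes case C)'s third alternative. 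I expect the genuine obstacle to be precisely this last step: organizing the sign analysis of $c$ and $\int_{V}h\dif\mu$ so that assumption C) always supplies an upper bound on $\max_{V}u$, and checking that the constant produced is uniform in the problem data subject to A)--C). Once $\max_{V}u\le C$ is established the oscillation bound closes the argument, and the symmetric case $\max_{V}|u|=-\min_{V}u$ is handled by replacing $u$ with $-u$ and $h$ with $he^{2u}$-type manipulations, or more cleanly by the direct observation that $\min_V u$ large negative forces $he^u$ negligible, hence $-\Delta u\approx -c$, contradicting $\int_V\Delta u\dif\mu=0$ when $c\ne 0$, and contradicting $\bar h\le -K^{-1}$ via the $e^{-u}$-test when $c\ge 0$.
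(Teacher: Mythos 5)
There is a genuine gap at the heart of your argument: you treat assumption B) as if it were a pointwise condition on $h$, but it only constrains $\max_{V}h$. The quadratic-in-$e^{u/2}$ trick behind \eqref{eq:L4-uniform} requires that $h$ be bounded below by $K^{-1}$ \emph{wherever it is positive}; at a vertex with $0<h(x)<K^{-1}$ it only gives $h(x)e^{u(x)}\leq C^{2}/h(x)+C\left(L(x)+K\right)$, and $C^{2}/h(x)$ is not controlled by $K$ (the constant must be uniform over all $h$ satisfying A)--C), since the theorem is applied along homotopies $h_t$). This is exactly the point where the paper's hypothesis B) is weaker than Sun--Wang's original pointwise condition, and the paper's proof is organized to avoid any pointwise use of $h$: it first gets a uniform lower bound $u\geq -C$ from Kato's inequality (\autoref{lem:kato}) together with the refined elliptic estimate (\autoref{lem:elliptic}), then shows by a neighbor-propagation argument that either $\max_{V}u\leq C$ or $u$ is large at \emph{every} vertex, and finally evaluates the equation only at a maximizer $x_{0}$ of $h$: by B) either $\max_{V}h\geq K^{-1}$, and the exponential-versus-linear comparison at $x_{0}$ (using $u\geq-C$) bounds $u(x_{0})$, or $\max_{V}h\leq0$, in which case $-\Delta u\leq K$, the oscillation is bounded, and $h=e^{-u}\left(c-\Delta u\right)$ would be forced to vanish, contradicting A). Your plan instead works at the maximum point of $u$ (or at arbitrary vertices), and, as you yourself concede, it does not close the case $c<0$; the missing idea is to exploit the maximizer of $h$, which is where B) actually bites. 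Note also that C) is not what supplies the upper bound on $\max_{V}u$; in the paper it is used only to exclude blow-down.

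The second gap is the reduction ``without loss of generality $\max_{V}u+\min_{V}u\geq0$'': unlike the sinh-Gordon equation, \eqref{eq:KW} has no $u\mapsto-u$ symmetry, so this case distinction must be argued, and your fallback for the downward case is not correct as sketched. Testing \eqref{eq:KW} with $e^{-u}$ only yields $\int_{V}h\dif\mu\leq c\int_{V}e^{-u}\dif\mu$, which is perfectly consistent with $\bar h\leq-K^{-1}$ when $c\geq0$, so no contradiction follows. What does work (and is what the paper does, in a compactness formulation with sequences $(h_{n},c_{n})$ satisfying A)--C) for the same $K$) is: if $u$ stays bounded above while $\min_{V}u\to-\infty$, then $\abs{\Delta u}\leq C$, the elliptic estimate bounds the oscillation, $u-\min_{V}u$ converges to a harmonic limit which must be $0$, and the integrated identity $\int_{V}he^{u-\min_{V}u}\dif\mu=c\,e^{-\min_{V}u}\abs{V}$ contradicts C) in either of its branches. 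If you insist on a direct quantitative proof in the style of \autoref{thm:apriori}, this limiting step (and the choice of constants) must be made quantitative as well; as written, both the pointwise use of B) and the treatment of the downward blow-up are steps that would fail.
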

\begin{proof}
Notice that the condition $B)$ is different from Sun and Wang \cite[Theorem 4.2]{SunWan22brouwer}. We should modify the original proof of Sun and Wang.  For convenience of the reader, we sketch the proof here which is slightly different from Sun and Wang's original one.

Assume the statement if false. One can find a sequence of functions $u_n$ solves
\begin{align}\label{eq:KW-n}
    -\Delta u_n=h_ne^{u_n}-c_n
\end{align}
and satisfies
\begin{align}\label{eq:KW-blowup}
    \lim_{n\to\infty}\max_{V}\abs{u_n}=\infty,
\end{align}
where the functions $h_n$ and constants $c_n$ satisfy the assumptions $A)-C)$ and
\begin{align*}
    \lim_{n\to\infty}h_n=h,\quad \lim_{n\to\infty}c_n=c.
\end{align*}

We claim that $u_n$ can not uniformly bounded from above. For otherwise, $\max_{V}u_n\leq C$. The equation \eqref{eq:KW-n} implies
\begin{align*}
    \abs{\Delta u_n}\leq C.
\end{align*}
Applying the \autoref{lem:elliptic}, we get
\begin{align*}
    \max_{V}u_n-\min_{V}u_n\leq C\max_{V}\abs{\Delta u_n}\leq C.
\end{align*}
Hence, the blowup assumption \eqref{eq:KW-blowup} yields
\begin{align*}
    \lim_{n\to\infty}u_n=-\infty.
\end{align*}
Moreover, up to a subsequence, we may assume $u_n-\min_{V}u_n$ converges to $w$. 
By the equation \eqref{eq:KW-n},
\begin{align*}
    -\Delta\left(u_n-\min_{V}u_n\right)=h_ne^{u_n}-c_n,
\end{align*}
we obtain
\begin{align*}
    -\Delta w=c,\quad\min_{V}w=0,
\end{align*}
which implies $c=0$ and $w\equiv0$. According to the equation \eqref{eq:KW-n} again, we get
\begin{align*}
    0=-e^{-\min_{V}u_n}\int_{V}\Delta u_n\dif\mu=\int_{V}h_ne^{u_n-\min_{V}u_n}\dif\mu-c_ne^{-\min_{V}u_n}\abs{V}.
\end{align*}
Under the assumption $C)$, we get
\begin{align*}
    -K^{-1}\geq\bar h=\lim_{n\to\infty}\fint_{V}h_ne^{u_n-\min_{V}u_n}\dif\mu\geq \liminf_{n\to\infty}c_ne^{-\min_{V}u_n}\geq0,
\end{align*}
which is a contradiction.

We may assume 
\begin{align*}
    0<\max_{V}u_n\to +\infty
\end{align*}
as $n\to\infty$. According to the Kato inequality \autoref{lem:kato},
\begin{align*}
    -\Delta u_n^-\leq-\Delta(-u_n)^{+}\cdot\chi_{-u_n>0}=\left(c_n-h_ne^{u_n}\right)\chi_{u_n<0}\leq C.
\end{align*}
Applying the \autoref{lem:elliptic}, we get
\begin{align*}
    \max_{V}u_n^-=\max_{V}u_n^--\min_{V}u_n^-\leq C\max_{V}\left(-\Delta u_n^-\right)\leq C,
\end{align*}
which implies
\begin{align*}
u_n\geq-C.
\end{align*}

By the definition of the Laplacian \eqref{mu-Laplace}, for each $y\sim x$
\begin{align*}
    -\Delta u_n(x)=\dfrac{1}{\mu_x}\omega_{xy}\left(u_n(x)-u_n(y)\right)+\dfrac{1}{\mu_x}\sum_{z\sim x, z\neq y}\omega_{xz}\left(u_n(x)-u_n(z)\right),
\end{align*}
which gives
\begin{align*}
    u_n(y)\leq C\left(u_n^+(x)+\left(\Delta u_n\right)^+(x)\right).
\end{align*}
Consequently, if $u_n(x)\leq C$ for some point $x$, then 
\begin{align*}
    \abs{\Delta u_n(x)}=\abs{h_ne^{u_n(x)}-c_n}\leq C
\end{align*}
and we get
\begin{align*}
     u_n(y)\leq C,\quad\forall y\sim x.
\end{align*}
Thus, we obtain by induction 
\begin{align*}
    \max_{V}u_n\leq C,
\end{align*}
which is a contradiction to the blowup assumption \eqref{eq:KW-blowup}. In other words, we may assume, up to a subsequence,
\begin{align*}
    \lim_{n\to\infty}u_n(x)=+\infty,\quad\forall x\in V.
\end{align*}
If $\max_{V}h_n\geq K^{-1}$, then we may assume $h_n(x_0)=\max_{V}h_n$. Thus
\begin{align*}
    h_n(x_0)e^{u_n(x_0)}-c_n=-\Delta u_n(x_0)\leq C\left(u_n^+(x_0)+1\right)
\end{align*}
since $u_n\geq-C$. We must have $u_n(x_0)\leq C$ which is impossible.

If $\max_{V}h_n<K^{-1}$, then  by the assumption $B)$ we have $\max_{V}h_n\leq0$ which gives
\begin{align*}
    -\Delta u_n\leq C
\end{align*}
and
\begin{align*}
    \abs{\Delta u_n}\leq C.
\end{align*}
It follows from \eqref{eq:KW-n} that
\begin{align*}
    h=\lim_{n\to\infty}h_n=\lim_{n\to\infty}e^{-u_n}\left(c_n-\Delta u_n\right)=0,
\end{align*}
which is a contradiction to the assumption $A)$.

\end{proof}

Based on this a priori estimate as in the \autoref{thm:apriori-appendix}, one can define the topological degree $D_{h,c}$ for the Kazdan-Warner equation \eqref{eq:KW} as follows:
\begin{align*}
    D_{h,c}=\lim_{R\to\infty}\deg\left(K_{h,c},B_{R}^{L^{\infty}(V)},0\right)
\end{align*}
where
\begin{align*}
    K_{h,c}(u)=-\Delta u-he^{u}+c,\quad\forall u\in L^{\infty}\left(V\right).
\end{align*}

\begin{theorem}[Sun-Wang \cite{SunWan22brouwer}]Let $G=(V,E)$ be a finite, connected and symmetric graph. If $h\neq0$, then
\begin{align*}
    D_{h,c}=\begin{cases}
        -1,& c>0,\ \max_{V}h>0;\\
        -1,&c=0,\ \bar h<0<\max_{V}h;\\
        1,&c<0,\ \min_{V}h<\max_{V}h\leq0;\\
        0,&else.
   \end{cases}
    \end{align*}
\end{theorem}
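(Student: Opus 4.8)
The plan is to reuse the strategy of \autoref{sec:degree} almost verbatim: fix $h\neq0$, note that $D_{h,c}$ is well defined because, by the a priori bound just recalled, every solution of \eqref{eq:KW} is uniformly bounded, and then in each of the four regimes deform $(h,c)$ to a model pair whose degree can be read off. The only device is homotopy invariance of the Brouwer degree, together with the observation that if $(h_t,c_t)_{t\in[0,1]}$ is a continuous path with $h_t\neq0$ along which conditions A), B) and C) of \autoref{thm:apriori-appendix} hold with one fixed $K$, then the union of all solution sets lies in a fixed ball $B_{CK}^{L^{\infty}(V)}$, so $D_{h_0,c_0}=D_{h_1,c_1}$. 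The sharpened hypothesis B) is exactly what makes these homotopies flexible: it is automatic once $\max_{V}h_t\leq0$, and otherwise only requires $\max_{V}h_t$ to stay bounded away from $0$; and C) tolerates $c_t=0$ whenever $\bar h_t\leq-K^{-1}$, which is what permits pushing $c$ through the value $0$.

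I would first dispose of every ``else'' case by showing \eqref{eq:KW} is simply unsolvable there, so $D_{h,c}=0$. If $c<0$ and $\max_{V}h>0$, the homotopy $h_t=h^{+}-(1-t)h^{-}$ (admissible, since $\max_{V}h_t=\max_{V}h>0$ and $\abs{c_t}=\abs{c}$ stay fixed) gives $D_{h,c}=D_{h^{+},c}$, and a solution of $-\Delta u=h^{+}e^{u}-c$ would force $0=\int_{V}h^{+}e^{u}\dif\mu+\abs{c}\abs{V}>0$. If $c=0$ and $h\leq0$, $h\not\equiv0$, a solution would force $0=\int_{V}he^{u}\dif\mu<0$. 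If $c=0$ and $\bar h\geq0$, the Kazdan--Warner identity $\int_{V}h\dif\mu=\int_{V}\Gamma(e^{-u},u)\dif\mu\leq0$ (with equality only for constant $u$, which forces $h\equiv0$) leaves no solution. If $c>0$ and $h\leq0$, then at a maximum point $x_1$ of $u$ one has $-\Delta u(x_1)\geq0$, so $0\leq h(x_1)e^{u(x_1)}-c<0$. Each of these is a contradiction.

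For the regime $c<0$, $h\leq0$, $h\not\equiv0$ (degree $1$) I would not deform at all. The functional $u\mapsto\frac12\int_{V}\abs{\nabla u}^2\dif\mu+\int_{V}\abs{h}e^{u}\dif\mu-\abs{c}\int_{V}u\dif\mu$ has Hessian $-\Delta+\abs{h}e^{u}\,\mathrm{Id}$, which is positive definite ($-\Delta$ kills non-constants; $\abs{h}e^{u}$ kills constants since $\int_{V}\abs{h}e^{u}\dif\mu>0$), so the functional is strictly convex on $L^{\infty}(V)$; it is coercive, because $\bar u\to+\infty$ makes the exponential term dominate, $\bar u\to-\infty$ makes $-\abs{c}\int_{V}u\dif\mu$ dominate, and the oscillation is controlled by the Dirichlet energy via Poincar\'e. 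Hence \eqref{eq:KW} has a unique, nondegenerate solution with positive definite Jacobian, and $D_{h,c}=1$. The two remaining (degree $-1$) regimes reduce to one model. For $c>0$, $\max_{V}h>0$: first homotope $c$ down to a suitably small $\varepsilon>0$ (admissible with fixed $K$), then homotope $h$ to $\Lambda\chi_{\{x_0\}}$, where $x_0$ is a maximum point of $h$ and $\Lambda\geq\max_{V}\abs{h}$, so that $\max_{V}h_t$ is attained at $x_0$ and stays positive and A), B), C) persist. The model $-\Delta u=\Lambda e^{u}\chi_{\{x_0\}}-\varepsilon$ has a unique solution: if $u,w$ both solve, then $-\Delta(u-w)=\Lambda(e^{u}-e^{w})\chi_{\{x_0\}}$, and by the strong maximum principle \autoref{lem:maximum} both the maximum and the minimum of $u-w$ must occur at $x_0$, forcing $u\equiv w$. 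Integrating the model equation gives $\Lambda e^{u(x_0)}=\varepsilon\abs{V}/\mu_{x_0}$, so its Jacobian is $-\Delta$ minus a rank-one positive semidefinite operator of norm $\varepsilon\abs{V}/\mu_{x_0}$; for $\varepsilon$ below the spectral gap of $-\Delta$ this has exactly one negative eigenvalue and no kernel, so its signed determinant is $-1$ and $D_{h,c}=-1$. Finally, for $c=0$, $\bar h<0<\max_{V}h$, homotope $c$ upward from $0$ to a small positive value (admissible, since $c_t\geq0$ and $\bar h_t=\bar h<0$ keep C)), landing in the case just treated; hence $D_{h,0}=-1$.

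The step I expect to be the main obstacle is the uniqueness and nondegeneracy of the model equation in the $c>0$ case. Unlike the $c<0$, $h\leq0$ situation, for $h\geq0$ the natural functional is not convex ($-\int_{V}he^{u}\dif\mu$ is concave), so uniqueness cannot be obtained from convexity and must be squeezed out of the maximum principle using the one-point support $h=\Lambda\chi_{\{x_0\}}$; shrinking $\varepsilon$ is then what pins down the sign of the Jacobian. The rest is bookkeeping --- checking A), B) and C) along each homotopy --- which is routine but indispensable, since it is precisely what turns a homotopy of equations into a homotopy of nowhere-vanishing maps on $\partial B_R^{L^{\infty}(V)}$.
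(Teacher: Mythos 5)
Your proposal is correct and follows the same skeleton as the paper's appendix proof: well-definedness of $D_{h,c}$ from the a priori bound, then \autoref{thm:apriori-appendix} plus homotopy invariance to reduce each regime to a model problem. The differences are local. For $c<0$, $h\leq0$, $h\neq0$ you argue directly via strict convexity and coercivity of the functional, instead of deforming $h$ to the constant $-1$ and using the explicit solution $\ln(-c)$ as the paper does; both yield a unique nondegenerate solution with positive definite Jacobian, hence degree $1$. For $c>0$ (and then $c=0$ by pushing $c$ upward, exactly as in the paper's Case 2) you deform to the point mass $\Lambda\chi_{\{x_0\}}$ with small $\varepsilon>0$ and prove uniqueness by the strong maximum principle \autoref{lem:maximum} at the support point, whereas the paper deforms to $h\equiv1$, proves uniqueness by an oscillation/elliptic estimate, and reads the degree off the explicit constant solution $\ln\epsilon$; your variant trades the explicit solution for a softer uniqueness argument, and the eigenvalue count for the sign $-1$ (one negative eigenvalue from testing on constants, the rest positive by Weyl for small $\varepsilon$) is sound. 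The ``else'' cases you settle by explicit nonexistence (integration, the identity $\int_V h\,\dif\mu=\int_V\Gamma(e^{-u},u)\,\dif\mu\leq0$, maximum principle), which is precisely the content of the paper's ``one can check'' necessary conditions.

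One loose end: in the $c>0$ case you prove the model $-\Delta u=\Lambda e^{u}\chi_{\{x_0\}}-\varepsilon$ has \emph{at most} one solution and compute the Jacobian sign at a solution, but never check that a solution exists; if it did not, homotopy invariance would force $D_{h,c}=0$, not $-1$. This is easy to repair: integrating forces $\Lambda e^{u(x_0)}\mu_{x_0}=\varepsilon\abs{V}$, and since the nonlinearity lives only at $x_0$ the equation reduces to the linear problem $-\Delta u=\varepsilon\left(\abs{V}\mu_{x_0}^{-1}\chi_{\{x_0\}}-1\right)$, whose right-hand side has zero mean, so a solution exists and can be shifted by a constant to satisfy the value constraint at $x_0$; alternatively, quote the Grigor'yan--Lin--Yang solvability result for $c>0$, $\max_V h>0$. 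A side remark: like the paper's own Case 3, your degree-$1$ argument applies to constant negative $h$ with $c<0$, which the theorem's literal condition $\min_V h<\max_V h\leq0$ relegates to ``else''; the discrepancy lies in the statement, not in your proof.
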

\begin{proof}
    For convenience of the reader, we sketch the proof here which is slightly different from Sun and Wang's original one.

    One can check that if the Kazdan-Warner equation \eqref{eq:KW} admits a solution, 
    then we have
    \begin{itemize}
        \item if $c>0$, then $\max_{V}h>0$;
        \item if $c=0$, then either $h\equiv0$ or $h$ changes sign and $\bar h<0$;
 \item if $c<0$, then $\min_{V}h<0$.
    \end{itemize}

    Now we compute the degree case by case as follows.
    \begin{enumerate}[\text{Case} 1.]
        \item If $c>0$ and $\max_{V}h>0$, then
        \begin{align*}
            D_{h,c}=D_{h^+,c}.
        \end{align*}
        
 In fact, we consider the following deformation
        \begin{align*}
            h_{t}=(1-t)h+t, \quad c_t=(1-t)c+t\epsilon,\quad t\in[0,1],
        \end{align*}
        where $\epsilon$ is a positive number to be determined. 
        If $u_t$ solves
        \begin{align*}
            -\Delta u_t=h_{t}e^{u_t}-c_t,
        \end{align*}
        then according to  the \autoref{thm:apriori-appendix} there exists a constant $C$ such that 
        \begin{align*}
            \abs{u_t}\leq C,\quad\forall t\in[0,1].
        \end{align*}
        By the homotopy invariance of the topological degree, we know that
\begin{align*}
    D_{h,c}=D_{1,\epsilon}.
\end{align*}

Finally, assume $u$ solves
\begin{align*}
    -\Delta u=e^{u}-\epsilon.
\end{align*}
If $w\neq u$ also solves
\begin{align*}
    -\Delta w=e^{w}-\epsilon,
\end{align*}
then
\begin{align*}
    \int_{V}e^{u}\dif\mu=\int_{V}e^{w}\dif\mu=\epsilon\abs{V}.
\end{align*}
We conclude that $\min_{V}(u-w)<0<\max_{V}(u-w)$. Moreover, we have the following estimate
\begin{align*}
    e^{u}\leq C\epsilon,\quad e^{w}\leq C\epsilon.
\end{align*}
On the other hand, since
\begin{align*}
    -\Delta(u-w)=e^{u}-e^{w}\leq e^{u}\left(u-w\right),
\end{align*}
we conclude that
\begin{align*}
    \max_{V}(u-w)-\min_{V}(u-w)\leq C\max_{V}\Delta(w-u)\leq C\epsilon\max_{V}(u-w).
\end{align*}
If $0<\epsilon<\epsilon_0=1/(2C)$, then we conclude that
\begin{align*}
    0<\max_{V}(u-w)\leq2\min_{V}(u-w)<0,
\end{align*}
which is a contradiction. In other words, if $0<\epsilon<\epsilon_0$, then $u_{\epsilon}=\ln\epsilon$ is the unique solution to 
\begin{align*}
    -\Delta u=e^{u}-\epsilon.
\end{align*}
Hence
\begin{align*}
    D_{h,c}=\lim_{\epsilon\searrow 0}D_{1,\epsilon}=-1.
\end{align*}

\item If $c=0, \bar h<0$ and $\max_{V}h>0$, then
\begin{align*}
    D_{h,0}=-1.
\end{align*}
In fact, we consider the following deformation
\begin{align*}
    c_t=t,\quad t\in[0,1].
\end{align*}
Assume $u_t$ satisfies
\begin{align*}
    -\Delta u_t=he^{u_t}-c_t,\quad t\in[0,1].
\end{align*}
according to  the \autoref{thm:apriori-appendix}, $\set{u_t}_{t\in[0,1]}$ is uniformly bounded. Hence
\begin{align*}
    D_{h,0}=D_{h,1}=-1.
\end{align*}

\item If $c<0, h\leq0$ and $\min_{V}h<0$, then
\begin{align*}
    D_{h,c}=1.
\end{align*}
In fact, we consider the following deformation
\begin{align*}
    h_{t}=(1-t)h-t,\quad t\in[0,1].
\end{align*}
If $u_t$ satisfies
\begin{align*}
    -\Delta u_t=h_{t}e^{u_t}-c,
\end{align*}
then \autoref{thm:apriori-appendix} implies that  $\set{u_t}_{t\in[0,1]}$ is  uniformly bounded. As a consequence
\begin{align*}
    D_{h,c}=D_{-1,c}.
\end{align*}
Notice that $\ln(-c)$ is the unique solution to the equation
 \begin{align*}
     -\Delta u=-e^{u}-c.
 \end{align*}
 As a consequence,
\begin{align*}
    D_{h,c}=D_{-1,c}=\mathrm{sgn}\det\left(-\Delta-c\mathrm{Id}\right)=1.
\end{align*}

\item If $c<0$ and $h$ changes sign, then
\begin{align*}
    D_{h,c}=0.
\end{align*}
In fact, according to  the \autoref{thm:apriori-appendix},
 \begin{align*}
     D_{h,c}=D_{1,c}.
 \end{align*}
 Notice that there is no solution to the equation
\begin{align*}
    -\Delta u=e^{u}-c,
\end{align*}
since $c<0$.
We must have
\begin{align*}
    D_{h,c}=D_{1,c}=0.
\end{align*}
    \end{enumerate} 
    
\end{proof}



\end{document}